\documentclass[11pt, letterpaper]{article}

\usepackage[english]{babel}	
\usepackage[utf8]{inputenc}
\usepackage{fontenc}
\usepackage{amsmath}
\usepackage{amssymb}
\usepackage{amsthm}	
\usepackage{amsfonts}
\usepackage{newpxtext}
\usepackage{newpxmath}
\usepackage[cal=euler]{mathalpha}
\usepackage[top=1.25in, bottom=1.25in, left=1.25in, right=1.25in]{geometry}
\usepackage[shortlabels]{enumitem}
	\setlist[itemize,1]{label=$\triangleright$}
	\setlist[itemize,2]{label=$\triangleright\triangleright$}
\usepackage{xcolor}
	\definecolor{myosotis}{RGB}{92, 84, 148}
\usepackage{cite}
\usepackage{calc}
\usepackage{colortbl}
\usepackage{cases}
\usepackage{hyperref}	
	\hypersetup{
		colorlinks=true,
		linkcolor=myosotis,
		citecolor=myosotis,
	}
\usepackage[noabbrev,capitalize,nameinlink]{cleveref}

\newcommand{\N}{\mathbb{N}}

\newcommand{\inv}{^{\raisebox{.2ex}{$\scriptstyle-1$}}}
\newcommand{\set}[1]{\left\{#1\right\}}
\newcommand{\rest}[1]{|_{#1}}
\newcommand{\cl}[1]{\overline{#1}}
\newcommand{\subs}{\subseteq}

\newcommand{\subsn}{\subsetneq}
\newcommand{\I}{\mathcal{I}}
\newcommand{\mc}[1]{\mathcal{#1}}
\newcommand{\m}{\mathfrak{m}}
\newcommand{\Ldiv}{\L_{\! \operatorname{div}}}
\newcommand{\eq}{^{\operatorname{eq}}}

\newcommand{\clalg}[1]{\cl{#1}^{\text{a}}}

\newcommand{\PACV}{\mathrm{PACV}\!}
\newcommand{\K}{\mathbf{K}}
\newcommand{\LK}{\L_{\K}}
\newcommand{\LKi}{\L_{\K,i}}
\newcommand{\LG}{\L^{\hspace{-2pt}\mathcal{G}}}
\newcommand{\T}{\mathbf{T}}

\renewcommand{\L}{\mathcal{L}}
\renewcommand{\phi}{\varphi}
\renewcommand{\epsilon}{\varepsilon}
\renewcommand{\tilde}{\widetilde}
\renewcommand{\O}{\mathcal{O}}
\renewcommand{\v}{\mathrm{v}}
\renewcommand{\S}{\mathbf{S}}

\def\Ind#1#2{#1\setbox0=\hbox{$#1x$}\kern\wd0\hbox to 0pt{\hss$#1\mid$\hss}
	\lower.9\ht0\hbox to 0pt{\hss$#1\smile$\hss}\kern\wd0}
\def\ind{\mathop{\mathpalette\Ind{}}}

\DeclareMathOperator{\Lring}{\L_{\!\text{rg}}}
\DeclareMathOperator{\ACF}{ACF}
\DeclareMathOperator{\ACVF}{ACVF}
\DeclareMathOperator{\Th}{Th}
\DeclareMathOperator{\acl}{acl}
\DeclareMathOperator{\dcl}{dcl}
\DeclareMathOperator{\trdeg}{trdeg}
\DeclareMathOperator{\tp}{tp}
\DeclareMathOperator{\qftp}{qftp}
\DeclareMathOperator{\Aut}{Aut}
\DeclareMathOperator{\qf}{qf}
\DeclareMathOperator{\GL}{GL}
\DeclareMathOperator{\im}{im}
\DeclareMathOperator{\rad}{rad}

\theoremstyle{definition}
\newtheorem{df}{Definition}[section]
\newtheorem{remark}[df]{Remark}
\newtheorem{notation}[df]{Notation}

\theoremstyle{plain}
\newtheorem{teo}[df]{Theorem}
\newtheorem{coro}[df]{Corollary}
\newtheorem{lemma}[df]{Lemma}
\newtheorem{prop}[df]{Proposition}
\newtheorem{fact}[df]{Fact}

\newtheorem*{teoA}{Theorem A}
\newtheorem*{teoB}{Theorem B}

\author{Bryan González Leandro}
\title{Imaginaries in perfect bounded pseudo algebraically closed fields with finitely many independent valuations}
\date{}

\begin{document}
	
\maketitle

\begin{abstract}
	In this paper, we prove weak elimination of imaginaries for perfect bounded pseudo-algebraically closed fields equipped with finitely many independent valuations. Our approach combines an extension result for types to invariant types with an amalgamation theorem. As a special case, we obtain full elimination of imaginaries when the field is equipped with a single valuation.
\end{abstract}

\section*{Introduction}

Elimination of imaginaries (EI) is a fundamental structural property in model theory, concerned with the definable coding of equivalence classes within a structure. More explicitly, an $\L$-theory $T$ eliminates imaginaries if, for every model $M$ of $T$ and every $\L(A)$-definable relation $R$ on $M^n$, there exists an $\L(A)$-definable function $f$ such that $xRy$ if and only if $f(x)=f(y)$. This is equivalent to the existence, for every definable set $X$, of a \textit{canonical parameter}, that is, a tuple $c$ and a formula $\phi$ such that $\phi(M,d)=X$ if and only if $c=d$.

Given an $\L$-theory $T$, it is always possible to expand the language and the theory in order to obtain elimination of imaginaries without adding new structure that was not already interpretable. For this, we add a  new sort $S_R$ for each definable equivalence relation $R$ on a definable subset $X_R$ of $M^n$, a constant for each equivalence class of $R$, and a function symbol $f_R:X_R\to S_R$. The theory $T\eq$ is defined as the theory $T$, together with the axioms stating that that each function $f_R$ is onto, and $\forall xy(f_R(x)=f_R(y)\leftrightarrow xRy)$

Certainly, any model of $T$ can be expanded to a model of $T\eq$. Moreover, $T\eq$ eliminates imaginaries. In this way, model theorists are naturally led to describe which fragments of $\L\eq$  eliminate imaginaries. 

In the context of valued fields, significant progress was initiated by Haskell, Hrushovski, and Macpherson, who described in \cite{haskellhrushovskimacpherson} a language in which the theory of non trivially algebraically closed valued fields ($\ACVF$) eliminate imaginaries. We recall that the language $\Ldiv$ for a valued field $(F,\v)$ is the language of rings $\Lring$, together with a binary relation symbol $|$, such that  $x|y$ if and only if $\v(x)\leq \v(y)$. It was shown by A. Robinson in 1956 that the theory $\ACVF$ eliminates quantifiers in this language.

The \textit{geometric language} is a fragment of $\Ldiv\eq$ introduced in \cite{haskellhrushovskimacpherson} obtained by adding to $\Ldiv$ the codes of certain definable submodules of $F^m$, and codes for some equivalence classes inside these submodules. To make this precise, we add to $\Ldiv$ a sort $\S_m$ and a sort $\T_m$ for each $m\in \N^*$. The sort $\S_m$ is interpreted as $\S_m = \GL_m(F)/\GL_m(\O)$, where $\O$ is the valuation ring. Any element $s\in \S_m$ can be seen as a basis of the same $\O$-submodule of $F^m$, denoted by $\Lambda(s)$. Then, we can construct the quotient $\Lambda(s)/\m\Lambda(s)$, where $\m$ is the maximal ideal of $\O$. The sort $\T_m$ is interpreted as the set of pairs $(s, u + \m\Lambda(s))$, where $s\in \S_m$ and $u+\m\Lambda(s)$ is an element of $\Lambda(s)/\m\Lambda(s)$. In this language, $\ACVF$ has elimination of imaginaries. 

Building on this foundation, Rideau-Kikuchi and Montenegro \cite{ppc} showed elimination of imaginaries for a theory of bounded pseudo $p$-adically closed multi-valued fields with finitely many independent valuations. Their approach was to add a copy of the geometric language for each valuation, and to show that there are no new imaginaries that can be defined from the interactions between different valuations. Then, they used an abstract criterion for weak elimination of imaginaries, based on proving a result on extending certain types by invariant types, and then a 3-amalgamation theorem. 

In a complementary development, the same authors introduced the notion of pseudo $T$-closed structures in \cite{ptc}, formalizing the notion of being \textit{pseudo closed} with respect to a given theory of fields, and generalizing the definitions of pseudo algebraically closed, pseudo $p$-adically closed, and pseudo real closed fields. In particular, many of the key obstacles to extending the methods developed in \cite{ppc} to the case of pseudo algebraically closed fields were resolved.  

The goal of this document is to use the techniques in \cite{ppc} in order to prove weak elimination of imaginaries for a theory of bounded, perfect, pseudo algebraically closed fields with finitely many independent valuations ($\PACV_n$ for short, where $n$ is the number of valuations). 

\vspace{\baselineskip}

\textbf{Overview of the paper}

\vspace{\baselineskip}

We start by setting the preliminary concepts and results on valued fields and PAC fields in \cref{seccprel}, and in \cref{seccdeflenguaje} we define the language and the theory for which we will prove elimination of imaginaries. For a perfect bounded $\PACV_n$ field $(F,\v_1,\dots ,\v_n)$ we consider $\L_i$ as a copy of the geometric language for the valuation $\v_i$, together with a countable set of constants. We work with the language $\L=\bigcup_i\L_i$ and the theory $T$ obtained by adding to $\Th_{\L}(F)$ some axioms encoding the fact that the field is bounded using the constants we added to the $\L_i$. 

After setting our framework, we make a brief detour in \cref{secccodingfinite} to prove that, if we consider only one valuation in our language, finite sets are coded in the structure. This is a consequence of \cref{lemaintedef}, stating that elements in the sorts of a $\PACV_n$ field are interdefinable with some elements in the sorts of its algebraic closure, and concluding with elimination of imaginaries for $\ACVF$. 

In \cref{secccrit} we state the criterion for weak elimination of imaginaries that we will use, which is proved in \cite[Proposition 1.17 and Lemma 1.19]{ppc}. This criterion is based on the following notion of quantifier free invariant independence: for a tuple $a$ and sets $B,E$ we say that the relation $a\ind_{E}^{\mathrm{i},\qf}B$ holds if there exists an $E$-invariant type $p$ over $M$ such that $a\models p\rest{BE}$.

The rest of the paper is focused on proving the conditions in \cref{criterioeliminacion}, the aforementioned criterion. In order to do that, we first need to understand in what way the geometric sorts for different valuations interact. The main result in \cref{seccort} is \cref{ortoti}, stating that they do not interact with each other: they are orthogonal. If $t_1,\dots ,t_n$ are elements of the geometric sorts of different valuations,  then the type of $(t_1,\dots ,t_n)$ over $A$ is completely determined by the $\L_i$-types of each $t_i$ over $A$, provided that that $A$ is a substructure contained in the field sort, and containing all of the field elements of $\acl(A)$. 

After this, \cref{seccalgclos} is devoted to describing the algebraic closure in models of $T$ of a substructure $A$. On one hand, \cref{partecorpiqueacl} says that the field elements of $\acl(A)$ are algebraic in the field-theoretic sense. On the other hand, \cref{parteimaginariaacl} states that the geometric elements in $\acl(A)$ are $\L_i$-algebraic over the elements of $A$ in the sorts for the valuation $v_i$. In this way, we decompose $\acl(A)$ as a union of algebraic closures in the languages $\L_i$. 

\cref{seccdensidtipos} contains the first big step in order to prove weak elimination of imaginaries. For a structure $M$, we denote $\mc{I}(M/E)$ for the set of types in $\mc{S}(M)$ that are $E$-invariant. We prove the following theorem. 
\begin{teoA}
	\textup{(\cref{teodensidadtipos})}. 
	Let $A=\acl^{\eq}(A)\subs M^{\eq}$ and $c\in \K^m(M)$. Then, there are types $p_i\in\mc{I}(\clalg{M}/\mc{G}_i(A))$ such that the partial type $\tp(c/A)\cup\bigcup_i p_i$ is consistent.
\end{teoA}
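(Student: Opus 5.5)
We follow the strategy of \cite{ppc} for the pseudo $p$-adically closed case. We reduce Theorem A to a density statement for each valuation separately, and then glue the pieces using the PAC property together with the independence of the valuations. First fix notation. $\mc{G}_i(A)$ denotes the elements of $A$ lying in the geometric sorts of $\L_i$, and $\clalg{M}$ is the algebraic closure of $M$. This is a model of $\ACVF$ for each extension of $\v_i$, which we fix. Each $p_i$ is then a global $\ACVF$-type in the $i$-th geometric language that is invariant over $\mc{G}_i(A)$.

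\textbf{Step 1: reduce to quantifier-free data.} The field elements of $A$ are algebraically closed in the field-theoretic sense (\cref{partecorpiqueacl}), and the geometric part of $A$ is a union of $\L_i$-algebraically closed pieces (\cref{parteimaginariaacl}). Together with \cref{lemaintedef}, this lets us replace $A$ by a structure whose $i$-th geometric part is interdefinable with a subset of the geometric sorts of $\clalg{M}$ for $\v_i$. For a bounded PAC field, $\tp(c/A)$ is determined by the isomorphism type of the algebraic closure of the field generated by $c$ and $A$, together with the induced valuations, since the absolute Galois group is controlled by the added constants. We then need to find, for each $i$, a realization of $p_i$ compatible with $\tp(c/A)$.

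\textbf{Step 2: one valuation at a time.} For each $i$, we take $q_i=\qftp_{\L_i}(c/\mc{G}_i(A))$ computed in $\clalg{M}\models\ACVF$. We use the known extension result for $\ACVF$: since $\mc{G}_i(A)$ is $\acl^{\eq}$-closed in the geometric language, every type over it extends to a global $\mc{G}_i(A)$-invariant type. This is the density of invariant types over algebraically closed sets of imaginaries from \cite{ppc}, going back to Haskell--Hrushovski--Macpherson. We obtain $p_i\supseteq q_i$ with $p_i\in\mc{I}(\clalg{M}/\mc{G}_i(A))$.

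\textbf{Step 3: glue.} We show that $\tp(c/A)\cup\bigcup_i p_i$ is consistent. By compactness, it suffices to fix a finite set $B\subs M$ and realize, inside some elementary extension, a tuple $c'\models\tp(c/A)$ with $c'\models p_i\rest{B}$ for all $i$. To do this, we take $a_i\models p_i\rest{MB}$ in $\ACVF$. We then build a field $L$ containing $M(c)$, together with valuations $w_1,\dots,w_n$, such that $(L,w_i)\supseteq(M(a_i),\v_i)$ after identifying $c$ with $a_i$. The identification is possible because $a_i$ and $c$ have the same quantifier-free $\L_i$-type over $\mc{G}_i(A)$. The $w_i$ can be chosen independently by approximation (independence of the valuations, as in \cite{ptc}). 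Next, we embed $L$ into an elementary extension of $M$ over $A$, so that $c\mapsto c'$ preserves $\tp(c/A)$. This uses the embedding lemma for pseudo $T$-closed fields from \cite{ptc}: a regular extension with the right Galois data embeds elementarily. The orthogonality result \cref{ortoti} guarantees that the separate $\L_i$-conditions do not conflict, since the joint type over the field part is determined by the individual $\L_i$-types.

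\textbf{Main obstacle.} We expect the hardest point to be Step 3. We must arrange that the amalgam $L$ is a regular extension of the relevant base and has the prescribed Galois-theoretic data, namely the bounded constants. Only then does the PAC embedding theorem give an elementary embedding respecting $\tp(c/A)$ rather than merely the quantifier-free type. We would first handle the case where $A$ is generated by field elements, using \cref{ortoti} directly. We would then treat geometric parameters through their interdefinable codes from \cref{lemaintedef}. To control the algebraic part of $c$, we would replace $c$ by a tuple enumerating the relative algebraic closure of $A(c)$, which is finite Galois data by boundedness.
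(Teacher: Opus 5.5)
There is a genuine gap. The whole content of the theorem is the consistency of $\tp(c/A)\cup\bigcup_i p_i$, and your proposal never actually proves it.

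\emph{Step 1 does not apply.} $A=\acl\eq(A)$ consists of arbitrary imaginaries of $M\eq$. By contrast, \cref{partecorpiqueacl}, \cref{parteimaginariaacl} and \cref{lemaintedef} concern substructures of $M$ in the geometric sorts (resp.\ elements of $\dcl^{\clalg{M}}(M)$). Reducing $\tp(c/A)$ to field-theoretic and geometric data would require $A$ to be coded in the geometric sorts, i.e.\ weak elimination of imaginaries. That is exactly what this theorem is used to establish, via condition (1) of \cref{criterioeliminacion}, so Step 1 is circular.

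\emph{Step 2 chooses the $p_i$ blindly.} You take each $p_i$ to be some $\ACVF$-invariant extension of $\qftp_{\L_i}(c/\mc{G}_i(A))$, separately for each $i$ and ignoring $\tp(c/A)$. (In general $\mc{G}_i(A)$ is not even $\acl$-closed in $\clalg{M}$, since $\K(A)$ is only relatively algebraically closed in $\K(M)$.) Nothing then prevents an $\L(A)$-formula of $\tp(c/A)$ from contradicting $\bigcup_i p_i$. Nothing even forces the field reducts of the different $p_i$ to agree.

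\emph{Step 3 assumes the conclusion.} Since $c\in\K^m(M)$, you cannot ``identify $c$ with $a_i$'' over $M$; you need a new realization $c'$. The embedding results of \cite{ptc} (e.g.\ \cref{existclosedL*}) only control $\qftp(c'/M)$, not $\tp(c'/A)$. \cref{ortoti} is about tuples in the sorts $\T^i_m$ over an $\acl$-closed field base. The $\L$-type of a field tuple is certainly not determined by its separate $\L_i$-types.

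Note also that $A=\acl\eq(A)$ plays no essential role in your argument, yet the statement needs it. Suppose $\tp(c/A)$ places $c$ in the union of two disjoint $i$-balls swapped by some element of $\Aut(M/A)$. That automorphism extends to an $\L_i$-automorphism of $\clalg{M}$ fixing $\mc{G}_i(A)$. Hence no complete global $\mc{G}_i(A)$-invariant $\L_i$-type can be consistent with $\tp(c/A)$.

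The missing idea is a specific choice of the $p_i$, together with a mechanism that transfers definability from one valuation to another.
\begin{enumerate}
\item For $m=1$, the paper takes $p_i=\eta_{P_i}$, the generic type of the intersection of all $A$-definable $i$-balls containing $c$. These types are $\mc{G}_i(A)$-invariant by construction.
\item If consistency failed, there would be an $\L(A)$-definable $X\ni c$ with $X\cap\bigcap_i b_i\subs\bigcup_{i,j}b_i^j$.
\item After normalizing, the $n$-balls (a canonical finite family $\set{b_n^j}$, or a minimal closed $n$-ball $b^*$ built from \cref{lemabolaminimalcerrada}) are determined by $X$ and the $b_i^j$ with $i<n$.
\item By \cref{codingfinitesets} they are coded in $\mc{G}_n$. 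This gives an $\L(A)$-definable map from $\mc{G}_i^{\im}$-sorts ($i<n$) into $\mc{G}_n$.
\item By orthogonality, in the form of \cref{funcimagenfinita}, this map has finite image. So these balls lie in $\acl\eq(A)=A$.
\item By minimality of the balls in $P_n$, $c$ avoids them, so they can be removed from $X$. Iterating down the valuations yields a contradiction.
\end{enumerate}
Tuples are then handled by induction on $m$. One uses \cref{pelosuelto} to transfer invariance to $E_i=\mc{G}_i(\acl\eq(Aa))$, applies the one-variable case over $A\cup\bigcup_i E_i$, and checks invariance of the resulting $q_i$ directly. No field amalgamation or embedding theorem is needed at this stage.
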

This says that, any type over an $\acl\eq$-closed set of parameters is consistent with a union of quantifier free types, who are global with respect to only one valuation and invariant over the given set of parameters. This will imply the first condition in \cref{criterioeliminacion}.

The last step is to show a 3-amalgamation result, as it is described in the second condition of \cref{criterioeliminacion}. For this, we prove in \cref{qftpimpliesfulltype} that, if $a$ is a tuple and $E$ is a set of parameters containing the field part of $\acl(E)$, then the $\L$-type of $a$ over $E$ is determined by the quantifier free type over $E$ of $a$, together with the field elements of $\acl(Ea)$. The proof of \cref{3amalgamacion}, the main theorem of \cref{secc3amalg}, is based on proving a collection of linear independence statements for certain subfields, in order to amalgamate the induced structure on them using the results in \cite[Section 4]{ptc}, and concluding with \cref{qftpimpliesfulltype}.

Finally, in \cref{seccfianl} we state the main theorem of this text:
\begin{teoB}
	\textup{(\cref{teofinal})}.
	The theory $T$ weakly eliminates imaginaries.
\end{teoB}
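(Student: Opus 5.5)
The plan is to apply the abstract criterion of \cref{criterioeliminacion}, i.e.\ \cite[Proposition 1.17 and Lemma 1.19]{ppc}, to $T$. It has two hypotheses, and we check them in turn. Since $T$ is the $\L$-theory of the multi-valued field together with the geometric sorts for each valuation, every imaginary is already interdefinable with a tuple from the field sort $\K$. So it suffices to verify the criterion for real tuples $c\in\K^m(M)$ over sets $A=\acl\eq(A)$.

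\textbf{Condition (1): invariant extensions.} Given $A=\acl\eq(A)$ and $c\in \K^m(M)$, we must find an $A$-invariant (quantifier-free) global type consistent with $\tp(c/A)$, so that $c\ind^{\mathrm{i},\qf}_{\mc{G}(A)}M$ holds after moving $c$ by an automorphism. Theorem A (\cref{teodensidadtipos}) provides types $p_i\in\mc{I}(\clalg{M}/\mc{G}_i(A))$, one for each valuation, with $\tp(c/A)\cup\bigcup_ip_i$ consistent.

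\begin{itemize}
\item First we glue these into a single quantifier-free type in $\L$ over $M$. The quantifier-free $\L$-type of a field tuple over $M$ is determined by the field type together with each $\L_i$-quantifier-free type. Since each $p_i$ is a type in $\ACVF$ for $\v_i$ over $\clalg{M}$, all $p_i$ extend one field-theoretic type.
\item Consistency of the union therefore yields a global qf type $p=\bigcup_i p_i\rest{M}$.
\item This $p$ is invariant under automorphisms fixing $\mc{G}(A)=\bigcup_i\mc{G}_i(A)$: an automorphism of $M$ fixing $A$ preserves each $\L_i$-part, hence each $p_i$.
\end{itemize}

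The orthogonality result \cref{ortoti} is used here to ensure that no interaction between the valuations obstructs the gluing.

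\textbf{Condition (2): 3-amalgamation.} This is exactly the content of \cref{3amalgamacion}. Given $E$ with the appropriate closure properties and tuples $a,b$ with $a\ind^{\mathrm{i},\qf}_E b$, and types of $a'$, $b'$ matching them over $E$, we need a realization amalgamating $\tp(a/Ec)$, $\tp(b/Ec)$ and $\tp(ab/E)$. We invoke \cref{3amalgamacion} directly. Behind it sits \cref{qftpimpliesfulltype}: the full type is determined by the qf type plus the field part of $\acl$, whose description comes from \cref{partecorpiqueacl} and \cref{parteimaginariaacl}.

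\textbf{Conclusion and main obstacle.} With both conditions verified, the criterion gives weak elimination of imaginaries in $\L$. The main obstacle is matching the precise form of the hypotheses of \cref{criterioeliminacion}. Specifically, we must show that the invariance base $\mc{G}(A)$ produced by Theorem A lies in $\dcl\eq(A)$ in the sense the criterion requires, and that the amalgamation is stated over the same bases. I would first check this by unwinding the definitions of $\mc{G}_i(A)$ as the geometric-sort points of $A$ and the $\acl$ decomposition of \cref{seccalgclos}.
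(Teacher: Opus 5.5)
Your proof follows the paper's. Condition (1) comes from Theorem A: realize $\tp(c/A)\cup\bigcup_i p_i$ and observe that every atomic formula of the resulting quantifier-free type lies in some $p_i$, which gives $\mc{G}(A)$-invariance. Condition (2) is \cref{3amalgamacion}.

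A few corrections:
\begin{itemize}
\item Imaginaries are images of field tuples, not interdefinable with them. This is harmless, since the criterion is already stated for field tuples.
\item \cref{ortoti} plays no role in the gluing, because consistency of the union is already given by Theorem A.
\item Your paraphrase of condition (2) is not the criterion's actual statement.
\item The base $R(E)=\mc{G}(E)$ is just the part of $E$ in the sorts of $\L$, so the obstacle you flag is vacuous.
\end{itemize}
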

We also conclude full elimination of imaginaries in the case of only one valuation.

\section*{Acknowledgments}

The author would like to express sincere gratitude to Silvain Rideau-Kikuchi for their guidance, support, and insightful discussions throughout this work. The author also acknowledges financial support from the Universidad de Costa Rica (UCR) through a doctoral scholarship.

\section{Preliminaries}\label{seccprel}

We assume that the reader is familiar with the basics of model theory and valued fields. If not, we refer the reader to \cite{tent} for an introduction to model theory, and \cite{valuedfieldsenglerprestel} for an introduction to the basic facts on valuations on fields. 

We recall that a variety $V$ over the field $F$ is \textit{geometrically integral} if its field of functions $F(V)$ is a regular extension of $F$.

\begin{fact}
	Let $F$ be a field. The following are equivalent:
	\begin{enumerate}
		\item For every geometrically integral variety $V$ defined over $F$, $V$ admits an $F$-rational point.
		\item For every geometrically integral variety $V$ defined over $F$, $V(F)$ is Zariski-dense in $V$.
		\item $F$ is existentially closed, as a structure in the language of rings, in any regular extension.
	\end{enumerate}
	We say that $F$ is \textbf{pseudo algebraically closed} (PAC) if it satisfies the conditions above. 
\end{fact}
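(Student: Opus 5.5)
The plan is to prove the cycle $(1)\Rightarrow(2)\Rightarrow(3)\Rightarrow(1)$. Throughout I use the standard fact that $F(V)/F$ regular means that $F$ is algebraically closed in $F(V)$ and that $F(V)/F$ is separable. Equivalently, $F(V)$ is linearly disjoint from $\clalg{F}$ over $F$, so that $V$ stays irreducible over $\clalg{F}$.

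\emph{$(1)\Rightarrow(2)$.} Let $V$ be geometrically integral over $F$ and let $U\subs V$ be a nonempty Zariski open subset. I want to produce an $F$-rational point of $U$; enlarging the closed complement, I may assume $U$ is defined over $F$. Since $V$ is irreducible, so is $U$, and $F(U)=F(V)$, so $U$ is again geometrically integral over $F$. It remains to reduce to varieties in the sense of (1). If (1) is read for affine varieties, shrink $U$ to an affine open $F$-subvariety, for instance the complement of a hypersurface in an affine chart. Then $U\cong\set{(x,y): x\in V', f(x)y=1}$ is affine, and its function field is still $F(V)$. Applying (1) gives a point of $U(F)$, so $V(F)$ meets every nonempty open set, i.e.\ it is dense.

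\emph{$(2)\Rightarrow(3)$.} Let $K/F$ be regular and let $\phi$ be an existential $\Lring(F)$-sentence true in $K$. Eliminating negations via $z\neq 0\leftrightarrow\exists w\,(zw=1)$ and taking a disjunct, I may assume $\phi=\exists \bar x\,\bigwedge_j f_j(\bar x)=0$ with $f_j\in F[\bar x]$, witnessed by $\bar a\in K^m$. Replace $K$ by $F(\bar a)$, which is still regular over $F$ because subextensions of regular extensions are regular. Let $V$ be the locus of $\bar a$ over $F$, the variety whose generic point is $\bar a$. Then $F(V)\cong F(\bar a)$ is regular, so $V$ is geometrically integral. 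Each $f_j$ vanishes at the generic point, hence on all of $V$. Thus any point of $V(F)$, which is nonempty by (2), witnesses $\phi$ in $F$.

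\emph{$(3)\Rightarrow(1)$.} Given $V$ geometrically integral over $F$, take a nonempty affine open $F$-subvariety $W$ with coordinate ring $F[\bar x]/I$. The generic point $\bar a\in F(V)^m$ satisfies the finitely many generators $g_k$ of $I$. The extension $F(V)/F$ is regular, so by (3) the sentence $\exists\bar x\,\bigwedge_k g_k(\bar x)=0$ holds in $F$. This gives a point of $W(F)\subs V(F)$.

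The main obstacle I expect is the bookkeeping in $(1)\Rightarrow(2)$. One has to check that open subvarieties inherit geometric integrality, and handle the definitions of ``variety'' (affine or abstract, open sets defined over $F$ or over $\clalg{F}$). For open sets not defined over $F$, I would intersect the finitely many Galois conjugates of the defining equations, or use that $V(F)$ is fixed by $\Aut(\clalg{F}/F)$. This shows that the Zariski closure of $V(F)$ is $F$-closed, so it suffices to consider $F$-open sets. The equivalence of regularity with the existence of a generic point in a geometrically integral locus is the standard input that I will simply cite.
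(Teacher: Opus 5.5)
The paper does not prove this statement. It is quoted as a standard Fact (the reference being Fried--Jarden, Chapter 11), so the only thing to compare against is the standard argument, and that is what you give. Your cycle $(1)\Rightarrow(2)\Rightarrow(3)\Rightarrow(1)$ is correct:
\begin{itemize}
\item reducing an existential sentence to a system of polynomial equations is sound;
\item passing to $F(\bar a)$, which stays regular, and to its locus over $F$ is sound;
\item using the generic point of an affine $F$-chart in $(3)\Rightarrow(1)$ is sound.
\end{itemize}
Note also that in $(2)\Rightarrow(3)$ you only use that $V(F)$ is nonempty.

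One point in $(1)\Rightarrow(2)$ needs tightening, because the statement allows $F$ to be imperfect. Invariance of $\cl{V(F)}$ under $\Aut(\clalg{F}/F)$ only shows that it is defined over the perfect hull of $F$. Intersecting Galois conjugates has the same defect: it misses purely inseparable fields of definition.

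The clean argument goes as follows. For any $S\subs F^n$, every polynomial over $\clalg{F}$ vanishing on $S$ is an $\clalg{F}$-linear combination of polynomials over $F$ vanishing on $S$. To see this, expand the coefficients in an $F$-linearly independent family. Then evaluate at points of $S$: all the resulting values lie in $F$, so linear independence forces each component polynomial to vanish on $S$. Hence $\cl{V(F)}$ is cut out by polynomials over $F$. If it were proper, its complement would be a nonempty $F$-open subvariety with no $F$-points, contradicting (1).

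Alternatively, replace a $\clalg{F}$-closed $Z\subsn V$ by the zero set of $I(Z)\cap F[\bar x]$. This set is still proper, because its components have the same transcendence degree as those of $Z$. This is a detail, not a gap in your strategy.
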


One of the important results on valued PAC fields is the fact that, if we prescribe a valuation, the field is dense in its algebraic closure

\begin{fact}
	\cite[Proposition 11.5.3]{friedjarden}. If $F$ is a PAC field and $\v$ is a valuation on $\clalg{F}$, then $F$ is $\v$-dense in $\clalg{F}$.
\end{fact}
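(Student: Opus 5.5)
We aim to show that for every $a\in\clalg{F}$ and every $\gamma$ in the value group of $\v$ there is $b\in F$ with $\v(b-a)>\gamma$. This is the only reasonable reading of density, since $\v$ on $\clalg{F}$ restricts to $F$. Since $\clalg{F}$ is algebraically closed, its value group is divisible. Multiplying by a suitable element therefore reduces to approximating within any prescribed ball: it suffices to show that every open ball $B(a,\gamma)=\{x\in\clalg{F}:\v(x-a)>\gamma\}$ meets $F$.

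The approach is to produce a geometrically integral variety over $F$ whose $F$-rational points are forced to land in this ball. Let $f\in F[X]$ be the minimal polynomial of $a$ over $F$, of degree $d$, and let $a=a_1,\dots,a_d$ be its roots (with multiplicity, if $F$ is not perfect). Pick $c\in F^\times$ with $\v(c)$ large; we can arrange this since the value group of $\clalg{F}$ is the divisible hull of that of $F$, and if $\v\rest F$ is trivial the statement is handled separately.

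Consider the curve
\[
C:\ f(X)=c\,Y^N
\]
for a large integer $N$ prime to the characteristic and to $d$. Up to a change of variables $C$ is absolutely irreducible: $Y^N-f(X)/c$ is irreducible over $\clalg{F}(X)$ by Eisenstein at a simple root of $f$, or by a Capelli-type criterion, since $f$ is not an $N$-th power in $\clalg{F}[X]$ when $\gcd(N,d)=1$. Hence $C$ is geometrically integral, and by the PAC property it has $F$-rational points. By the Zariski-density clause (2) of the Fact we may choose such a point $(x,y)$ with $\v(y)\ge 0$.

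\emph{This step is the main obstacle}, since Zariski density alone gives no valuative control. The fix I would try first is to use a different curve, $f(X)=c\,(1+Y^N)$ or $f(X)=c\,g(Y)$, chosen so that every point has bounded $y$. Alternatively, I would replace $Y$ by $1/Y$ to handle $\v(y)<0$, so that in both cases $\v(f(x))\ge \v(c)$ up to a bounded error.

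Granting $\v(f(x))\ge\v(c)$, we have $\sum_i \v(x-a_i)\ge \v(c)$, so some $i$ satisfies $\v(x-a_i)\ge \v(c)/d$, which exceeds $\gamma$ once $\v(c)$ is large. This gives a point of $F$ close to some conjugate $a_i$, not necessarily to $a$ itself.

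To get closeness to $a$, I would apply the same argument to the $F$-conjugates of the ball. Alternatively, I would first reduce to the case where $a$ is not close to its other conjugates, using a Krasner-type separation. If $F$ is not perfect, I would pass to $F^{\mathrm{perf}}$, which is purely inseparable over $F$ and still PAC; purely inseparable elements can be approximated via $p$-th powers.

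The cleaner alternative is to place the ball condition into a Weil restriction. Take the variety $W=R_{F(a)/F}$ of the affine line and consider the open subset of points whose $a$-coordinate lies near $a$. This is defined by valuative conditions, not Zariski ones, and this is exactly why the standard proof in Fried–Jarden uses the curve trick above. I expect this conjugate-selection issue to be the main technical obstacle.
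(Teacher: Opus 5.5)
The paper gives no proof of this fact; it only cites Fried--Jarden. So your proposal has to stand on its own, and it has a genuine gap exactly where you locate it. Everything rests on producing $x\in F$ with $\v(f(x))$ large. On the curve $f(X)=cY^N$ this follows only at $F$-points with $\v(y)\ge 0$, and neither the PAC property nor Zariski density says anything about $\v(y)$.

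Your proposed repairs cannot work. A nonconstant rational function on an absolutely irreducible curve omits only finitely many values of $\clalg{F}$, so no curve $f(X)=c\,g(Y)$ has bounded $y$ at all its points. Replacing $Y$ by $1/Y$ only moves the bad region: on $f(X)=cY^N$, points with $\v(y)<0$ have $\v(f(x))=\v(c)+N\v(y)$, which is unbounded below.

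The missing idea is to choose the curve so that the bad case is itself useful. For example, take $C\colon f(X)f(Y)=c$ with $c\in F^\times$. For separable $f$ this curve is absolutely irreducible. Over $\clalg{F}(u)$, the cover $u=f(X)$ is unramified above $u=0$, while $u=c/f(Y)$ is totally ramified there. Hence $f(Y)-c/f(X)$ is irreducible over $\clalg{F}(X)$, and $f(X)f(Y)-c$ is primitive in $Y$. At any $F$-point we have $\v(f(x))+\v(f(y))=\v(c)$. So one of $x,y\in F$ satisfies $\v(f(\cdot))\ge\v(c)/2$, and no control on the individual coordinates is needed.

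The second step, getting close to $a$ itself rather than to some conjugate $a_i$, is also left open. Conjugating the ball or invoking Krasner separation does not select a root. The standard fix is to pass to the Galois closure. For separable $a$, let $b$ be a primitive element of the Galois closure $L$ of $F(a)$, with minimal polynomial $g$ of degree $m$ and roots $b_1,\dots,b_m$. Since $F(b_j)=L$ for each $j$, there are $h_j,k\in F[X]$ with $h_j(b_j)=b$ and $k(b)=a$. By continuity of polynomials, pick $\delta$ such that $\v(x-b_j)>\delta$ implies $\v(k(h_j(x))-a)>\gamma$ for every $j$. Apply the previous step to get $x\in F$ with $\v(g(x))>m\delta$. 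Then $\v(x-b_j)>\delta$ for some $j$, and $k(h_j(x))\in F$ is the desired approximation.

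Two smaller points:
\begin{enumerate}
\item $\v$ must be assumed nontrivial. For trivial $\v$ the statement is false unless $F$ is algebraically closed, so there is nothing to ``handle separately''.
\item The inseparable case should not go through $F^{\mathrm{perf}}$. Approximating $b^{1/p}$ by elements of $F$ amounts to approximating $b$ by $p$-th powers, which is itself a density claim. Instead, use that $F^{\mathrm{sep}}$ is dense in $\clalg{F}$ for any valued field: a root of the separable polynomial $X^q-tX-b$ with $\v(t)\gg 0$ is close to $b^{1/q}$. For the perfect fields of the paper this issue does not arise.
\end{enumerate}
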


In this document, we will focus on PAC fields and we will consider finitely many independent valuations on them. As a reminder for the reader, two valuations are independent if they generate different topologies. As a consequence of the independence of the valuation, we have the following simultaneous approximation theorem.

\begin{fact}\label{approximationprestelziegler}
	\cite[Theorem 4.1]{prestelziegler}
	Let $(F,\v_1,\dots ,\v_n)$ be a $\PACV_n$ field. If, for $i=1,\dots ,n$, we consider $U_i$ an open set of the topology generated by $\v_i$, we have that $\cap U_i\neq\emptyset$. 
\end{fact}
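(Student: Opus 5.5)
The plan is to reduce the statement to the classical approximation theorem for finitely many pairwise independent valuations. On this route the PAC hypothesis plays no role for opens of $F$ itself. It would matter only in the variant where the $U_i$ are open subsets of $\clalg{F}$, and there one would first use the density of $F$ in $\clalg{F}$ (the cited \cite[Proposition 11.5.3]{friedjarden}) to move to opens of $F$.

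First, reduce to balls. Each $U_i$ is nonempty, since otherwise the statement fails trivially, so pick $a_i\in U_i$. Because $U_i$ is $\v_i$-open, there is $\gamma_i$ in the value group of $\v_i$ with
\[B_i=\set{x\in F : \v_i(x-a_i)>\gamma_i}\subs U_i.\]
It therefore suffices to find $x\in F$ with $\v_i(x-a_i)>\gamma_i$ for every $i$. If the $U_i$ are open subsets of $F^m$, the same reduction works with products of balls, and one argues coordinatewise.

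Second, invoke the approximation theorem for independent valuations (Artin--Whaples in rank one, and in general e.g.\ \cite[Theorem 2.4.1]{valuedfieldsenglerprestel}). We must check that "generating different topologies" matches the hypothesis of that theorem, namely having no common nontrivial coarsening. If two nontrivial valuations have a common nontrivial coarsening, then they induce the same topology as that coarsening. Conversely, two valuations inducing the same topology are dependent, which is the standard characterization of dependence via topologies. So the hypotheses agree, and the theorem gives the required $x$.

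If one wants to reprove the approximation theorem rather than cite it, the key step is the following. For each $i$ we construct $z_i$ with $\v_i(z_i-1)$ large and $\v_j(z_i)$ large for $j\neq i$. Then $x=\sum_i a_i z_i$ works, once the $z_i$ are chosen with precision compensating for the values $\v_j(a_i)$. The hardest step, and the one I expect to be the main obstacle, is producing these $z_i$ when the valuations have higher rank. Here I would first use independence to find, for each pair $i\neq j$, an element $y$ with $\v_i(y)>0$ and $\v_j(y)<0$; this is where "no common coarsening" is used, through the fact that the valuation rings $\O_i$ and $\O_j$ generate the whole field. I would then combine these elements via expressions of the form $1/(1+y^k)$ and products over the indices, raising to high powers so the required precision exceeds each prescribed $\gamma_j$.
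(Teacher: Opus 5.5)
Your proposal is correct and matches the paper, which gives no argument of its own but simply cites Prestel--Ziegler's Theorem 4.1, the approximation theorem for finitely many distinct V-topologies; as you note, the PAC hypothesis plays no role, so reducing to balls and citing the approximation theorem for pairwise independent nontrivial valuations is exactly the intended justification. The one weak spot is your optional reproof sketch, which does not affect the main argument since that rests on the citation: in higher rank the powers of a fixed $y$ only reach values in the convex subgroup generated by $\v_i(y)$, so they cannot exceed an arbitrary $\gamma_j$, and independence has to be used more substantially there, as in the standard proof.
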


Let us set some notation. Let $(F,\v_1,\dots ,\v_n)$ be a field with $n$ valuations on it. We write $\O_i$ for the valuation ring of $\v_i$, $\m_i$ for its maximal ideal, $\Gamma_i$ for its value group, and $\mathbf{k}_i$ for its residue field. The balls coming from the valuation $\v_i$ will be called $i$-balls. 

For the following, we consider only one valuation $\v$ on the field $F$. 

\begin{remark}
	Let $s\in \GL_m(F)/\GL_m(\O)$. One easily shows that the columns of any representative of $s$ generate a free $\O$-module of rank $m$, that will be denoted by $\Lambda(s)$. This is well defined: $s$ is exactly the set of matrices whose columns form a basis for $\Lambda(s)$.
\end{remark}

\begin{df}
	The \textbf{geometric language} $\LG$ for the valued field $(F,\v)$  consists of the following:
	\begin{itemize}
		\item A sort $\K$ equipped with the language $\Ldiv$
		\item Sorts $\S_m$ for $m\in\N^*$ to be interpreted as $\S_m(F) = \GL_m(F)/\GL_m(\O)$, and maps $s_m: \K^{m^2}\to \S_m$ for the natural projections.
		\item Sorts $\T_m$ for $m\in\N^*$ to be interpreted as 
		\[ \T_m(F) = \set{ (s, t) : s\in\S_m(F), t\in \Lambda(s)/\m\Lambda(s) }. \]
		We will also consider maps $t_m: \K^{m^2}\to \T_m$ taking a matrix $B\in\GL_m(F)$ with first column $u$, and sending it to the tuple $(\,s_m(B), u + \m\Lambda(s_m(B))\,)$.
		\item The necessary predicates to have quantifier elimination in the $\LG$-theory of algebraically closed valued fields ($\ACVF^{\mathcal{G}}$). 
	\end{itemize}
\end{df}

In this language, \cite{haskellhrushovskimacpherson} showed that the theory $\ACVF$ admits elimination of imaginaries. 

\begin{remark}\label{remarknoocupoS}
	Note that we can consider $\S_m$ as a subset of $\T_m$, by the identification $s\mapsto (s,\m\Lambda(s))$. This is a $\emptyset$-definable injection, therefore we will suppose that formulas in this language have parameters only in the $\T$ sorts and/or in the field sort. 
\end{remark}

\begin{remark}\label{remarkmetertodoentm}
	Take a formula $\phi(x_1,\dots ,x_{\ell})$, where each $x_j$ is a variable for the sort $\T_{m_j}$. Let 
	\[t_j= (s_j, u_j + \m\Lambda(s_j))\in \T_{m_j}.\]  
	Considering the module $\Lambda=\prod_j \Lambda(s_j)$, we can easily construct $s\in\S_m$ for $m=\sum_j m_j$ such that $\Lambda(s)=\Lambda$, and $\emptyset$-definable functions $\rho_j:\S_m\to\S_{m_j}$ such that $\rho_j(s)=s_j$. We can also construct $\emptyset$-definable surjections $\eta_j:\Lambda(s)/m\Lambda(s)\to \Lambda(s_j)/\m\Lambda(s_j) $ given by the corresponding coordinate projections. Defining $t=(s, u + \m\Lambda(s))$, where $u=(u_1,\dots , u_{\ell})$ and $g_j(t)=(\rho_j(s),\eta_j(u+m\Lambda(s)))$, we have that  $\models \phi(t_1,\dots ,t_{\ell})$ if and only if $\models \phi(g_1(t),\dots ,g_{\ell}(t))$. This says that the $\LG$-formulas with parameters can be supposed to have parameters in the field sort and only one parameter in some $T_m$, for $m$ sufficiently large.
\end{remark}

\section{The language and the theory}\label{seccdeflenguaje}

In this section we will define the language and the theory we will work with. For $(F,\v_1,\dots ,\v_n)$ a $\PACV_n$ field, we're going to denote $\LKi$ a copy of $\Ldiv$ for the valuation $\v_i$ and $\LK=\bigcup_i\LKi$. Recall that the definition of the class $\PACV_n$ requires the valuations to be independent.

\begin{notation}
	Fix $(F,\v_1,\dots ,\v_n)$ a bounded, perfect, $\PACV_n$ field. Also, we consider a fixed function $\mathfrak{d}:\N^*\to \N^*$. We define $\L_i$ as a copy of the geometric language for the valuation $\v_i$, together with constants $c_j$ in the field sort, for each $j>0$, such that $|c_j|=\mathfrak{d}(j)$. All the $\L_i$ share the field sort and the constants $c_j$. We write $\mathcal{G}_i$ for the sorts of $\L_i$, and $\mc{G}_i^{\im}$ for all sorts of $\L_i$ except $\K$. We define $T_{\mathrm{bd}}$ as the $\L_i$-theory containing the field axioms, and the following sentences for each $m>0$:
	\begin{itemize}
		\item the polynomial $r_m(x)= x^{\mathfrak{d}(m)} +\displaystyle\sum\limits_{j<\mathfrak{d}(m)}c_{m,j}x^j$ is irreducible,
		\item every separable polynomial of degree $m$ is split modulo $r_m$.
	\end{itemize}
	We define $\L=\cup_i\L_i$ and $T=T_{\mathrm{bd}}\cup\Th_{\L}(F)$. We denote $\mc{G}$ for the sorts of $\L$.
\end{notation}

\begin{remark}
	As it is stated in \cite[Remark 6.3]{ptc}, the theory $T_{\mathrm{bd}}$ conveys what it means to be bounded. Precisely, any model of $T_{\mathrm{bd}}$ is bounded, and any bounded model of $\Th_{\L}(F)$ can be made into a model of $T_{\mathrm{bd}}$ for some well chosen function $\mathfrak{d}$ and constants $c_j$. 
\end{remark}

Our goal is to show weak elimination of imaginaries for the theory $T$. We fix $M\models T$ sufficiently saturated and homogeneous. Note that   $\clalg{\K(M)}$ can be naturally made into an $\LK$-structure extending $\K(M)$: it suffices to extend each valuation $\v_i$ (whose extension will be also denoted by $\v_i$). Then, we can define the geometric sorts for $\clalg{\K(M)}$ and make it into an $\L$-structure that we will denote by $\clalg{M}$.  This structure will also be fixed throughout this document.

\begin{remark}
	If $F_0\leq M$ is a subfield of $\K(M)$ and $F_0\models T_{\mathrm{bd}}$, then $\clalg{K(M)}=K(M)\clalg{F_0}$ by \cite[Lemma 6.4]{ptc}. Using the argument in \cite[Lemma 2.46]{ppc}, we show that for any $\L$-substructure $A\leq \K(M)$, if $\clalg{A}\cap\K(M)\subs A$, then $\clalg{A}=\clalg{F_0}A$.
\end{remark}

\section{Coding of finite sets from one valuation}\label{secccodingfinite}

In the case where we consider only one valuation, we can deduce the coding of finite sets from elimination of imaginaries in $\ACVF$. For this, we will show that certain elements of the algebraic closure of $M$ are interdefinable with elements of $M$. This is an adaptation to our case of \cite[Lemma 2.35]{ppc}. For this section only, we suppose that we consider only one valuation on $M$.

\begin{lemma}\label{lemaintedef}
	Let $\epsilon\in\dcl_{\L}^{\clalg{M}}(M)$. Then, there is $\eta\in M$ such that $\epsilon$ and $\eta$ are interdefinable in the pair $(\clalg{M},M)$.  

\end{lemma}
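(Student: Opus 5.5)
Write $\epsilon$ as an element of one of the sorts $\K$, $\S_m$ or $\T_m$ of $\clalg{M}$. By \cref{remarknoocupoS} it is enough to treat $\K$ and $\T_m$. The main tool is the fact quoted in \cref{seccprel}: since $\K(M)$ is PAC, it is $\v$-dense in $\clalg{\K(M)}$. The plan is to show that, in the geometric sorts, $\epsilon$ is already the image of a tuple from $\K(M)$. It then corresponds to an element $\eta$ of the corresponding sort of $M$, and we check that the correspondence is definable in both directions in the pair $(\clalg{M},M)$.

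\emph{Case $\epsilon=s\in\S_m(\clalg{M})$.} Take any $B\in\GL_m(\clalg{\K(M)})$ with $s_m(B)=s$. The set
\[ U=\set{B'\in M_m(\clalg{\K(M)}) : B\inv B'\in\GL_m(\O)} \]
contains $B$ and is $\v$-open, because $I+\m M_m(\O)\subs\GL_m(\O)$ and multiplication by $B\inv$ is continuous. By density, $U$ contains some $B'\in M_m(\K(M))$. Then $s_m(B')=s$, so $\Lambda(s)$ has a basis in $\K(M)^m$. Let $\eta\in\S_m(M)$ be the class of $B'$ modulo $\GL_m(\O\cap\K(M))$.

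Next we check interdefinability in the pair. The map $\eta\mapsto s$ sends the class of $B'$ in $M$ to its class in $\clalg{M}$. It is well defined because $\GL_m(\O_M)\subs\GL_m(\O)$, and it is definable in the pair. Conversely, $\Lambda(\eta)=\Lambda(s)\cap\K(M)^m$: a vector of $\K(M)^m$ has coordinates in $\O$ with respect to $B'$ if and only if those coordinates, which lie in $\K(M)$, lie in $\O\cap\K(M)$. So $\eta$ is recovered from $s$ as the class of any basis of $\Lambda(s)$ lying in $\K(M)^m$. The map is therefore injective with definable inverse.

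\emph{Case $\epsilon=(s,u+\m\Lambda(s))\in\T_m(\clalg{M})$.} First choose $B'$ as above. The coset $u+\m\Lambda(s)$ is open, so by density it contains some $u'\in\K(M)^m$. Set $\eta=(\eta_S,u'+\m_M\Lambda(\eta_S))\in\T_m(M)$, where $\eta_S$ is the element obtained in the previous case. The same reasoning applies: the map $\eta\mapsto\epsilon$ is definable, and it is injective because
\[ (u'+\m\Lambda(s))\cap\K(M)^m=u'+\m_M\Lambda(\eta_S). \]
In these two cases the hypothesis $\epsilon\in\dcl_{\L}^{\clalg{M}}(M)$ is not used.

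\emph{Field sort.} This is where I expect the main obstacle, and where the hypothesis must be used. An element $\epsilon\in\dcl_{\ACVF}(\K(M))$ lies in the henselization of the perfect field $\K(M)$, which is not contained in $\K(M)$ because $M$ is not henselian. My plan is as follows. Let $P$ be the minimal polynomial of $\epsilon$ over $\K(M)$ and let $c\in\K(M)$ be the tuple of its coefficients; $c$ is definable from $\epsilon$ in the pair. The aim is to show that $\epsilon$ is the unique root of $P$ whose $\ACVF$-type over $\K(M)$ has a property expressible in the pair using only $c$ and quantifiers over the predicate $M$.

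Since $\epsilon\in\dcl(\K(M))$, it is the only root of $P$ with its type over $\K(M)$. By density and Krasner's lemma, the type over $\K(M)$ of a root $x$ of $P$ is determined by the function $y\mapsto\v(x-y)$ on $\K(M)$. The step I would try first is to express, using quantification over $M$, that a root $x$ of $P$ is fixed by the decomposition group of $\v$. A concrete form: for every other root $x'$ there is $y\in\K(M)$ with $\v(x-y)>\v(x'-y)$, together with the corresponding condition on the tuple of all roots, so that $\epsilon$ is canonically singled out over $c$. If such a formula is found, $\eta=c$ works.
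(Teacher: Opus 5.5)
Your treatment of $\S_m$ and $\T_m$ is correct, and it takes a different and simpler route than the paper. Density shows directly that every element of $\S_m(\clalg M)$ or $\T_m(\clalg M)$ already comes from $M$. The paper instead passes to a finite extension $L$ and pulls $u+\m\Lambda(s)$ back along the restriction of scalars $f_a$, obtaining an element of $\T_{mr}(M)$.

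The gap is the field sort, which you leave as a plan, and the plan does not work as stated. Your candidate formula does not distinguish $\epsilon$ from its conjugates. For distinct roots $x,x'$ of $P$, density gives $y\in\K(M)$ with $\v(x-y)>\v(x-x')$, hence $\v(x'-y)=\v(x-x')<\v(x-y)$, so every root satisfies it. The decomposition-group condition is empty for the same reason. By density and Krasner's lemma, the henselization of $\K(M)$ is all of $\clalg{\K(M)}$, so the decomposition group is trivial. Every root of $P$ therefore lies in $\dcl^{\clalg M}_{\L}(M)$, and the hypothesis gives you nothing in this sort.

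More seriously, $\eta=c$ is the wrong target. Let $\delta$ be the maximum of $\v(\epsilon-\epsilon')$ over the other roots $\epsilon'$ of $P$, and let $B=\set{x:\v(x-\epsilon)>\delta}$. If $\tau$ is an automorphism of the pair fixing $c$, then $\tau(B)$ is the ball attached in the same way to the root $\tau(\epsilon)$. Both balls meet $\K(M)$ and each contains exactly one root, so $\tau(\epsilon)=\epsilon$ if and only if $\tau(B)\cap\K(M)=B\cap\K(M)$. Every automorphism of $M$ extends to one of the pair, so $\epsilon\in\dcl(c)$ in the pair amounts to the ball $B\cap\K(M)$ being fixed by $\Aut(M/c)$. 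A priori this ball is only one member of a finite $\Aut(M/c)$-invariant family (one ball for each root of $P$). Nothing in your sketch shows that $\Aut(M/c)$ cannot move it, so $c$ alone determines $\epsilon$ only up to this finite ambiguity.

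What is missing is extra data from $M$ that selects the root. The paper gets it from boundedness. Since $\clalg{\K(M)}=\K(M)\clalg{F_0}$, it writes $\K(M)(\epsilon)=\K(M)(b)$ with $b\in\clalg{F_0}$, which it takes to be $\emptyset$-definable via \cite[Corollary 11.5.5]{friedjarden}. It then lets $\eta$ be the coordinates of $\epsilon$ in the basis $1,b,\dots,b^{r-1}$. Within your own framework there is a direct repair: take $\eta=(c,\ulcorner B\urcorner)$.
\begin{itemize}
\item The open ball $B$ is coded in a geometric sort of $\clalg M$, hence, by your density argument, by an element of $M$.
\item $\epsilon$ is the unique root of $P$ in $B$, so $\epsilon$ is definable from $\eta$.
\item Conversely, $c$ (the minimal polynomial over the predicate for $\K(M)$), $\delta$ and $B$ are definable from $\epsilon$ in the pair.
\end{itemize}
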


\begin{proof}
	Let $F_0$ be the subfield of $\K(M)$ generated by the constants $c_j$. Suppose $\epsilon\in\K(\clalg{M})$. Since we can write  $\clalg{\K(M)}$ as $\K(M)\clalg{F_0}$, there is $b\in\clalg{F_0}$ such that $\K(M)[\epsilon]=\K(M)[b]$. Note that $b$ is $\L_i$-definable without parameters, since the henselian closure of $F_0$ is $\clalg{F_0}$ \cite[Corollary 11.5.5]{friedjarden}. Then, we can find $\eta_i$ such that $\epsilon=\sum_i\eta_i b^i$. It suffices to consider $\eta$ as the tuple of the $\eta_i$. 
	
	By \cref{remarknoocupoS}, the only case left to consider is when $\epsilon\in\T_m(\clalg{M})$ for some $m$, write $\epsilon = (s, u + \m\Lambda(s))$. Let us define $L$ as a finite extension of $\K(M)$ where $s$ has a representative, and  $u+\m\Lambda(s)$ has a point. Without loss of generality, we suppose $u$ is that point. Let $a\in \clalg{F_0}$ such that $\O(L)=\O(\K(M))[a]$. Let $r$ be the degree of $a$ over $\K(M)$.
	
	Note that there is a vector space isomorphism $f_a:\K(M)^r\to L$ given by $x\mapsto \sum_{i=0}^{r-1} x_ia^i$.  Then, $f_a\inv(u+\m\Lambda(s))$ can be written as a coset $u' +  \m\Lambda(s')$, where $s'\in S_{mr}(M)$. Let $\eta$ be $(s', u'+\m\Lambda(s'))$. We will show that $\epsilon$ and $\eta$ are interdefinable. 
	
	Note that if $a$ and $a'$ are conjugate over $\Aut(\clalg{M}/F_0)$, then they are conjugate over $\Aut(\clalg{M}/M)$. This is, we can find $\sigma\in\Aut(\clalg{M}/M)$ such that $\sigma(a)=a'$. Then, a computation shows that
	\[ f_a\inv(u+\m\Lambda(s)) = f_{\sigma(a)}\inv(u+\m\Lambda(s)) = f_{a'}\inv(u+\m\Lambda(s)).\]
	This proves that the construction of $\eta$ remains the same if we take instead of $a$ some other conjugate of $a$ over $\Aut(\clalg{M}/M)$. Then, $\epsilon$ and $\eta$ are interdefinable in the pair $(\clalg{M}/M)$. Indeed, any automorphism $\tau$ of $\clalg{M}$ that fixes $M$ as a set fixes $\epsilon$ if and only if it fixes $\eta$. 
\end{proof}

\begin{coro}\label{codingfinitesets}
	Finite sets in $M$ are coded in $M$.
\end{coro}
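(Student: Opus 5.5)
The plan is to code a finite set $X=\set{e_1,\dots,e_k}\subs M$ using elimination of imaginaries in $\ACVF$ for the geometric language, working in $\clalg{M}$, and then to pull the resulting code back into $M$ via \cref{lemaintedef}. First I reduce to the case where all $e_j$ lie in a single sort. If $X$ meets several sorts, it is interdefinable with the tuple of its intersections with each sort, so it suffices to code each of those. By \cref{remarknoocupoS} and \cref{remarkmetertodoentm}, I may further assume that the relevant sort is either $\K$ or some $\T_m$.

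Next I view $X$ as a finite subset of $\clalg{M}$, which is a model of $\ACVF$ in the geometric language. Since $\ACVF$ eliminates imaginaries in $\LG$ by \cite{haskellhrushovskimacpherson}, and since finite sets are in particular imaginaries, there is a tuple $\epsilon$ in the geometric sorts of $\clalg{M}$ that codes $X$. Concretely, an automorphism of $\clalg{M}$ fixes $\epsilon$ if and only if it fixes $X$ setwise, and $\epsilon\in\dcl_{\L}^{\clalg{M}}(X)$. In particular, $\epsilon\in\dcl_{\L}^{\clalg{M}}(M)$.

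Applying \cref{lemaintedef} coordinatewise then gives $\eta\in M$ interdefinable with $\epsilon$ in the pair $(\clalg{M},M)$. The remaining step is to show that $\eta$ codes $X$ in $M$, and this is where the real work lies: interdefinability in the pair concerns automorphisms of $\clalg{M}$ preserving $M$, whereas coding in $M$ concerns automorphisms of $M$ (or definability in $M$). I would argue with automorphisms and use saturation and homogeneity of $M$. Let $\sigma\in\Aut(M)$. The first point is that $\sigma$ extends to an automorphism $\tilde\sigma$ of $\clalg{M}$ that preserves $M$. This holds because $\sigma$ preserves each $\v_i$ on $\K(M)$, and the extension of the valuation to $\clalg{\K(M)}$ can be matched, since all extensions are conjugate over $\K(M)$. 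Consequently $\tilde\sigma$ also respects the geometric sorts, and one can choose $\tilde\sigma$ so that it agrees with $\sigma$ on the $\T_m(M)$ sorts.

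With this extension in hand, the argument runs as follows:
\[
\sigma(X)=X \iff \tilde\sigma(X)=X \iff \tilde\sigma(\epsilon)=\epsilon \iff \tilde\sigma(\eta)=\eta \iff \sigma(\eta)=\eta.
\]
Hence $X$ and $\eta$ have the same stabilizer in $\Aut(M)$. For a finite set $X$ in a saturated, homogeneous $M$, this gives $\eta\in\dcl(\ulcorner X\urcorner)$ and $X$ definable over $\eta$, via the standard Galois-theoretic criterion. Here $X$ is $\eta$-invariant and therefore $\eta$-definable, and $\eta$ has finitely many, in fact one, conjugate over the code of $X$. This means $\eta$ is a code for $X$.

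The main obstacle I expect is justifying the extension of $\sigma$ to $\tilde\sigma$ in a way compatible with the chosen extensions of the valuations on $\clalg{M}$. To handle it, I would first try to invoke uniqueness of the valuation extension up to $\Aut(\clalg{\K(M)}/\K(M))$. The density of PAC fields (the fact from \cite[Proposition 11.5.3]{friedjarden}) makes the henselization large, and this should fix the relevant valuation extension up to conjugacy.
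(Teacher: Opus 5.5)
Your proposal is correct and follows the paper's route: code $X$ in $\clalg{M}$ by elimination of imaginaries for $\ACVF$, observe that the code lies in $\dcl^{\clalg{M}}(M)$, and transfer it to $M$ via \cref{lemaintedef}. Your extra step, extending each $\sigma\in\Aut(M)$ to a valuation-preserving $\tilde\sigma\in\Aut(\clalg{M})$ that preserves $M$, spells out what the paper leaves implicit; it needs only the standard conjugacy over $\K(M)$ of the extensions of a valuation to $\clalg{\K(M)}$, with no appeal to PAC density.
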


\begin{proof}
	Any finite set in $\mc{G}(M)$ is coded, in $\clalg{M}$, by a tuple $\epsilon\in\dcl_{\L}^{\clalg{M}}(M)$. We conclude by \cref{lemaintedef}.
\end{proof}

\section{The criterion for weak elimination of imaginaries}\label{secccrit}

Let us now introduce the criterion for weak elimination of imaginaries that we are going to use. The criterion is based on the notion of quantifier free invariant independence. 

\begin{df}
	Let $A\subs M$. A global type $p(x)$ is said to be $\Aut(M/A)$-invariant if for every $\sigma\in\Aut(M/A)$ and $\phi(x,m)\in p(x)$, where $m$ is the tuple of parameters of the formula $\phi$, we have that $\phi(x,\sigma(m))$ is in $p(x)$. The set of $\Aut(M/A)$-invariant types will be denoted as $\I(M/A)$. 
\end{df}

\begin{df}
	Let $a\in M$ be a tuple and $B,E\subs M$. We define $a\ind^{\mathrm{i},\qf}_E B$ to hold if there is a quantifier free $\Aut(M/E)$-invariant type $p$ over $M$ such that $a\models p\rest{BE}$. In other words, $\qftp(a/BE)$ has an $\Aut(M/E)$-invariant, quantifier free, global extension.
\end{df}

\begin{prop} \label{criterioeliminacion}
	\cite[Proposition 1.17 and Lemma 1.19]{ppc}
	Consider a language $\L_0$, and let $\L_1$ be a fragment of $\L_0\eq$ containing $\L_0$ with sorts $R$. Let $T$ be an $\L_1$-theory and $M\models T$ sufficiently saturated and homogeneous. Write $M_0$ for $M\rest{\L_0}$. Suppose that:
	\begin{enumerate}
		\item for all $E=\acl\eq(E)\subs M\eq$, tuple $a\in M_0$ and $C\subs M_0$, there exists $a^*$ such that $a^*\equiv_{\L_1(E)}a$ and $a^*\ind_{R(E)}^{\mathrm{i},\qf}C$;
		\item for all $E=\acl_{\L_1}(E)\subs M$ and tuples $a_1,a_2,c_1,c_2,c\in M_0$, if $a_1\ind_E^{\mathrm{i},\qf} a_2$, $c\ind_E^{\mathrm{i},\qf} a_1a_2$,  $c_1\equiv_{\L_1(E)}c_2$ and for all $i$, $a_ic_i\equiv_{\L_1(E)}^{\qf}a_ic$, then there is $c^*$ such that $a_ic_i\equiv_{\L_1(E)}a_ic^*$ for all $i$.	
	\end{enumerate}
	Then, $T$ weakly eliminates imaginaries.
\end{prop}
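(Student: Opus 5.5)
The plan is to prove weak elimination of imaginaries directly. Take any $e\in M\eq$ and set $E=\acl\eq(e)$. It suffices to find a finite tuple $d\in R(E)$ (so $d\in\acl\eq(e)$) with $e\in\dcl\eq(d)$. Write $e=f(a)$ for an $\emptyset$-definable function $f$ and a tuple $a\in M_0$; we may always choose $a$ in the $\L_0$-sorts. Then we argue by contradiction. Suppose $e\notin\dcl\eq(R(E))$. By saturation and homogeneity there is $a'\equiv_{R(E)}a$ with $f(a')\neq e$. The aim is to use the two hypotheses to build a realization of $\tp(a/E)$ that is also "close" to $a'$, and so reach a contradiction.

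The first key step uses hypothesis (1). Apply it with $C$ a tuple containing $a$ and $a'$. This gives $a^*\equiv_{\L_1(E)}a$ with $a^*\ind^{\mathrm{i},\qf}_{R(E)}aa'$. Since $a^*\equiv_E a$, we get $f(a^*)=e$. Next, fix a global $R(E)$-invariant quantifier free type $p$ witnessing this independence. Using $p$ we build a sequence $c_1,c_2,\dots$ realizing $p$ over the previous elements, so that it is indiscernible for quantifier free formulas over $R(E)$. The $R(E)$-invariance is what gives $a c\equiv^{\qf}_{R(E)} a' c$ for $c\models p\rest{R(E)aa'}$: since $a\equiv_{R(E)}a'$, some automorphism over $R(E)$ moves $a$ to $a'$ and preserves $p$.

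The second step uses hypothesis (2) to upgrade this quantifier-free agreement to full type agreement. We want to find $c^*$ with $a c^*\equiv_E a c_1$ and $a' c^*\equiv_{E'} a' c_2$, for a suitable conjugate configuration. Concretely, following the standard argument (cf.\ the proof in \cite{ppc}), I would first apply (2) over $E_0=\acl_{\L_1}(R(E))$ with $a_1=a$, $a_2=a'$, and $c$ a realization of $p$ over $E_0aa'$. We take $c_1$ with $a c_1\equiv_{E_0} a a^*$, and $c_2$ a conjugate of $c_1$ over $E_0$ moving $a$ to $a'$. The quantifier-free conditions $a_ic_i\equiv^{\qf}a_ic$ follow from invariance. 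The independence $a\ind^{\mathrm{i},\qf}_{E_0}a'$ may first require replacing $a'$ by a realization of a second invariant extension, again obtained from (1). Hypothesis (2) then yields $c^*$ with $ac^*\equiv_{E_0}aa^*$ and $a'c^*\equiv_{E_0}a'\sigma(a^*)$.

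From the relation $ac^*\equiv_{E_0}aa^*$, the element $f(c^*)$ relates to $e$ in the same way that $f(a^*)=e$ relates to $f(a)=e$. We would like to conclude $f(c^*)=e=f(a)$ and $f(c^*)=f(a')$, which gives $f(a')=e$, a contradiction. The delicate point is that $e$ itself is not in $E_0$, so preserving the $E_0$-type might not control $e$. This forces us to work with the $E$-type and then transfer to $E_0$ through the finitely many conjugates of $e$. This transfer is exactly where weak elimination, rather than full elimination, appears. Since $e$ is algebraic over $E_0$, only finitely many values of $f$ are possible. A counting or pigeonhole argument then shows that $e$ is algebraic over $R(E)$, and in fact interalgebraic with a real tuple.

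I expect the main obstacle to be arranging the independence hypotheses of (2) simultaneously: $a\ind a'$ together with $c\ind aa'$, both over an $\acl_{\L_1}$-closed real set. The combination with preserving full $E$-types is also where the argument can go wrong. I would handle this by iterating (1) twice, first to make $a'$ independent from $a$, and then to choose $c$. Throughout, the homogeneity of $M$ lets us move witnesses by automorphisms.
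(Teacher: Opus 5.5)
The paper gives no proof of this proposition; it is quoted from \cite[Proposition 1.17 and Lemma 1.19]{ppc}, so I judge your argument on its own terms. The skeleton is sound. You apply (1) twice. You use the $\Aut(M/R(E))$-invariance of the witnessing quantifier-free type $p$ to turn $a\equiv_{R(E)}a'$ into $ac\equiv^{\qf}_{R(E)}a'c$. You then apply (2) once over $E_0=R(E)$, which is already $\acl_{\L_1}$-closed because $E=\acl\eq(E)$.

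The gap is at the end. Once you have $c^*$ with $ac^*\equiv_{E_0}aa^*$ and $a'c^*\equiv_{E_0}a'\sigma(a^*)$, you declare a ``delicate point'' and switch to a pigeonhole argument. That argument cannot work as stated, for two reasons:
\begin{itemize}
\item Nothing tells you that $e$ is algebraic over $E_0$. You only know $E_0\subs\acl\eq(e)$, which is the wrong direction.
\item Even $e\in\acl\eq(E_0)$, or interalgebraicity of $e$ with a real tuple, would only give geometric elimination of imaginaries. Weak elimination requires a real $d\in\acl\eq(e)$ with $e\in\dcl\eq(d)$.
\end{itemize}
Aiming for $ac^*\equiv_E ac_1$ is also pointless, since (2) only ever produces types over real $\acl_{\L_1}$-closed sets.

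The missing observation is that you need no control of $e$ over $E_0$, because $f(x)=f(y)$ is an $\emptyset$-definable formula.
\begin{itemize}
\item Since $f(a^*)=f(a)$, this formula lies in $\tp(aa^*/E_0)$, so $f(c^*)=f(a)=e$.
\item Since $f(\sigma(a^*))=\sigma(f(a))=f(a')$, it lies in $\tp(a'\sigma(a^*)/E_0)$, so $f(c^*)=f(a')$.
\end{itemize}
Hence $f(a')=e$, which is the desired contradiction. So $e\in\dcl\eq(R(E))$, and compactness gives a finite $d\subs R(E)\subs\acl\eq(e)$ with $e\in\dcl\eq(d)$.

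You should also fix the order of choices.
\begin{itemize}
\item First secure the independence of the pair. The simplest way is to apply (1) to get $a_1\equiv_E a$ with $a_1\ind^{\mathrm{i},\qf}_{R(E)}a'$. If you move $a'$ instead, do it over $\acl\eq(f(a'))$, whose real part is again $R(E)$, so that $f(a')\neq e$ survives.
\item Only then apply (1) with $C=a_1a'$ to get $c\equiv_E a$ and its witness $p$.
\item Put $c_1=c$ and $c_2=\tau(c)$ for some $\tau\in\Aut(M/R(E))$ with $\tau(a_1)=a'$.
\end{itemize}
The quantifier-free indiscernible sequence plays no role and can be dropped.
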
	

In our application to $\PACV_n$ fields, $\L_0$ will be $\LK$ and $\L_1$ will be $\L$. In this way, the tuples in the conditions of the criterion will be tuples in the field sort. 

\section{Orthogonality of the geometric sorts}\label{seccort}

Intuitively, since the valuations are independent, one valuation in the language does not carry information about another one. This translates to the geometric structure. The goal of this section is to prove that the geometric sorts for different valuations do not "interact" with each other (i.e. they are orthogonal). 

\begin{lemma}\label{simaxtrdeg}
	Fix $m\in\N^*$, $A\subs \K(M)$ and for $i=1,\dots ,n$ let $s_i\in \mathbf{S}_m^i(M)$, Assume $M$ is $|A|^+$-saturated. Then there exists $c\in \bigcap_i s_i$ such that $\trdeg(c/A)=m^2$. 
\end{lemma}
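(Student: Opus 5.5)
Each $s_i\in\S^i_m(M)=\GL_m(\K(M))/\GL_m(\O_i)$ is a coset $B_iG_i$ with $G_i=\GL_m(\O_i)$. Such a coset is a nonempty open subset of $\K(M)^{m^2}$ in the product topology induced by $\v_i$. Indeed, $\GL_m(\O_i)$ is open, since it contains $1+\m_i M_m(\O_i)$, and it is a union of translates of that neighbourhood; left multiplication by $B_i$ is a homeomorphism. The plan is:
\begin{enumerate}
	\item use \cref{approximationprestelziegler} to see that $\bigcap_i s_i$ contains a set that is simultaneously open for all the topologies;
	\item show that no proper subvariety of $\mathbb{A}^{m^2}$ defined over $A$ contains such a set;
	\item conclude by compactness and $|A|^+$-saturation.
\end{enumerate}

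For step 1, \cref{approximationprestelziegler} is stated in dimension one. It applies coordinatewise, since $\v_i$-open boxes in $\K^{m^2}$ are products of $\v_i$-balls. For each $i$, pick a box $\prod_{k} U_{i,k}\subs s_i$ (a product of $i$-balls around the coordinates of a point of $s_i$). Then $\bigcap_i\prod_k U_{i,k}=\prod_k\bigcap_i U_{i,k}$ is nonempty, and every point of it lies in $\bigcap_i s_i$. We work with the family of sets $W=\prod_k\bigcap_i U'_{i,k}$, where $U'_{i,k}$ ranges over $i$-balls. These are nonempty and closed under finite intersections of neighbourhoods of a common point, again by \cref{approximationprestelziegler}.

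For step 2, it suffices to prove the following claim: for any nonzero polynomial $f\in A[x_1,\dots,x_{m^2}]$ and any such box $W$, there is $c\in W$ with $f(c)\neq 0$. I would argue by induction on the number of variables. In one variable, each $\bigcap_i U'_{i,k}$ is infinite: it contains a nonempty intersection of smaller balls, and inside any ball one can pick infinitely many disjoint sub-balls, each still meeting the other topologies' open sets by \cref{approximationprestelziegler}. A nonzero polynomial in one variable has only finitely many roots, so it cannot vanish on an infinite set. For the inductive step, write $f=\sum_j f_j(x')x_{m^2}^j$ with some $f_j\neq 0$. Choose $c'$ in the projected box with $f_j(c')\neq 0$, then choose the last coordinate avoiding the finitely many roots of $f(c',x_{m^2})$. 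This is the standard argument that a product of infinite sets is Zariski dense.

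For step 3, consider the partial type in variables $x=(x_1,\dots,x_{m^2})$:
\[ \set{x\in W_0}\cup\set{f(x)\neq 0 : f\in A[x]\setminus\{0\}}, \]
where $W_0\subs\bigcap_i s_i$ is a fixed box from step 1. Membership in $W_0$ is expressed by finitely many formulas with parameters the centres of the balls, together with parameters for the $B_i$. Including the $s_i$ themselves as parameters makes this cleaner: just use the formulas $x\in s_i$ in the geometric sorts. Finite satisfiability is exactly step 2, since finitely many nonzero polynomials multiply to a nonzero polynomial. The parameter set has size $|A|$ plus finitely many extra parameters, so $|A|^+$-saturation gives a realization $c$. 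Such a $c$ lies in $\bigcap_i s_i$ and satisfies no nontrivial polynomial relation over $A$, i.e.\ $\trdeg(c/A)=m^2$.

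The main obstacle I expect is step 2. Precisely, one must check carefully that the intersection of finitely many balls for independent valuations is infinite, and that the restriction to the box remains a nonempty "simultaneously open" set after fixing coordinates. Both follow from \cref{approximationprestelziegler} applied to shrinking balls. An alternative is to use that $\K(M)$ is PAC, so that its rational points are dense in every geometrically integral variety. However, the density-of-infinite-sets argument is more elementary and avoids this.
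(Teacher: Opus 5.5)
Your proof is correct and follows essentially the paper's route: the $s_i$ are $i$-open translates of $\GL_m(\O_i)$, you shrink them to boxes of balls, you use \cref{approximationprestelziegler} to see that the simultaneous intersections are infinite, and you finish with compactness. The only difference is organizational: the paper uses translation to reduce to finding, one coordinate at a time, an element of $\bigcap_i\O_i$ transcendental over a small set, whereas you prove that the whole box is Zariski dense (as a product of infinite sets) and apply saturation once.
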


\begin{proof}
	The argument in this proof is the same as in \cite[Lemma 2.25]{ppc}, but we include the proof for the sake of completeness. First, note that $\O_i^{\times}$ is an $i$-open subset of $\K(M)$, since $\O_i^{\times}= \O_i - \m_i$. Then, $\GL_m(\O_i)\subs\K(M)^{m^2}$ is $i$-open too: it suffices to see it as the inverse image of $\O_i^{\times}$ under the determinant map. As each $s_i$ is just a traslate of $\GL_m(\O_i)$, they are also open subsets of $\K(M)^{m^2}$.
	
	Let $U_i$ be a product of closed $i$-balls such that $U_i\subs s_i$. Note that this exists since any non trivial open $i$-ball is also $i$-closed. Then, it suffices to find $c\in \bigcap_i U_i$ such that $\trdeg(c/A)=m^2$. By induction on the dimension and translation, this reduces to showing that, given any small set of parameters, we can find some element of $\bigcap_i\O_i$ that is transcendental over that set. This can be proved by compactness: by \cref{approximationprestelziegler}, $\bigcap_i\O_i$ is infinite. 
\end{proof}

\begin{lemma}\label{timaxtrdeg}
	Fix $m\in\N^*$, $A\subs \K(M)$ and for $i=1,\dots ,n$,  let $t_i=(s_i, u_i+ \m_i\Lambda(s_i))\in \mathbf{T}_m^i(M)$, Assume $M$ is $|A|^+$-saturated. Then there exists $c\in\bigcap_i s_i$ and $d\in\bigcap_i u_i+\m_i\Lambda(s_i)$  such that $\trdeg(cd/A)=m^2+m$. 
\end{lemma}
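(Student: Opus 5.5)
We follow the argument of \cref{simaxtrdeg}: the point is that the set of pairs $(c,d)$ with $c\in s_i$ and $d\in u_i+\m_i\Lambda(s_i)$ is an $i$-open subset of $\K(M)^{m^2+m}$ for each $i$, so the approximation theorem together with saturation gives a generic point.

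\emph{Step 1 (openness).} For each $i$, set
\[ W_i=\set{(c,d)\in\K(M)^{m^2}\times\K(M)^m : c\in s_i,\ d\in u_i+\m_i\Lambda(s_i)}. \]
Fix a representative $B_i\in s_i$. Every $c\in s_i$ has columns forming a basis of $\Lambda(s_i)$, so the condition on $d$ depends only on $s_i$. It is equivalent to $B_i\inv(d-u_i)\in\m_i^m$, where $u_i$ is any fixed point of the coset, which exists in $\K(M)$ since $t_i\in\T^i_m(M)$. The first coordinate set is $s_i=B_i\GL_m(\O_i)$, which is $i$-open by the proof of \cref{simaxtrdeg}. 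The second is $u_i+B_i\m_i^m$, the image of the $i$-open set $\m_i^m$ under an affine homeomorphism. Hence $W_i=s_i\times(u_i+B_i\m_i^m)$ is a nonempty $i$-open product.

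\emph{Step 2 (shrinking to boxes).} Inside $s_i$ choose a product $U_i$ of closed $i$-balls of nonzero radius, exactly as in \cref{simaxtrdeg}. The set $u_i+B_i\m_i^m$ is not itself a product of balls, since $B_i$ mixes coordinates. It does, however, contain an open box around $u_i$: choose $\gamma$ large enough that $\v_i(B_i x)>0$ whenever all coordinates of $x$ have valuation greater than $\gamma$, and take a smaller closed ball inside it. This yields a product $V_i$ of closed (hence also open) $i$-balls in $\K(M)^m$ with $V_i\subs u_i+B_i\m_i^m$. So $U_i\times V_i\subs W_i$ is a product of $m^2+m$ nontrivial closed $i$-balls.

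\emph{Step 3 (generic point).} It remains to find $(c,d)\in\bigcap_i (U_i\times V_i)$ with $\trdeg(cd/A)=m^2+m$. We argue coordinate by coordinate, exactly as in \cref{simaxtrdeg}. At each stage, for the $k$-th coordinate, we need an element in the intersection over $i$ of the $k$-th ball of each $U_i\times V_i$ that is transcendental over $A$ together with the previously chosen coordinates. After translating and scaling, each $k$-th ball becomes $\O_i$. The approximation theorem (\cref{approximationprestelziegler}) applied to these balls shows that any finite intersection of them with complements of finitely many points is nonempty. Such an intersection is infinite, so by compactness and $|A|^+$-saturation there is a point of the intersection that is transcendental over the small parameter set.

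The rescaling needs care because different $i$ require different affine maps. It is easier to apply \cref{approximationprestelziegler} directly to the $i$-open sets $D_i\setminus Z$ with $Z$ finite: each such set is $i$-open and nonempty, so the intersection is nonempty. This shows the type ``$x\in\bigcap_i D_i$ and $x$ avoids every algebraic set over the parameters'' is finitely satisfiable. The only genuine subtlety is Step 2, namely that the $\m_i\Lambda(s_i)$-coset is open and contains a box; once that is settled, the rest is the same induction as before.
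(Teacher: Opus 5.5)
Your proposal is correct and is essentially the paper's argument. The paper takes $c$ from \cref{simaxtrdeg} and then reruns the same openness, approximation and saturation argument to get $d$ with $\trdeg(d/A(c))=m$, while you run one coordinate-by-coordinate induction over all $m^2+m$ coordinates and apply \cref{approximationprestelziegler} directly to the sets $D_i\setminus Z$, which amounts to the same thing. There is one harmless slip in Step 2: the condition should be $\v_i(B_i\inv x)>0$, not $\v_i(B_i x)>0$, so that the box lands inside $B_i\m_i^m$; in any case the box already follows from the openness you proved in Step 1.
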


\begin{proof}
	The tuple $c$ is obtained applying \cref{simaxtrdeg}. Now, since $\m_i^m$ is an open subset of $\K(M)^m$, $m_i\Lambda(s_i)$ is also open, and therefore each $u_i+\m_i\Lambda(s)$ is too. We can apply the same argument used in \cref{simaxtrdeg}, \textit{mutatis mutandis}, to find $d\in\bigcap_i u_i+\m_i\Lambda(s_i)$ with $\trdeg(d/A(c))=m$. Then,
	\[ \trdeg(cd/A) = \trdeg(d/A(c)) + \trdeg(c/A) = m+m^2.\qedhere \]
\end{proof}

\begin{teo}\label{ortoti}
	Let $A=\K(\acl(A))\leq \K(M)$ and $t_i,t_i'\in\mathbf{T}_m^i(M)$ such that $t_i\equiv_{\L_i(A)}t_i'$ for all $i$.. Then, $(t_1,\dots ,t_n)\equiv_{\L(A)}(t_1',\dots ,t_n')$
\end{teo}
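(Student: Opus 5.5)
The plan follows the strategy of the corresponding orthogonality result in \cite{ppc}: reduce from the imaginary sorts to field tuples, and then use the simultaneous approximation of \cref{approximationprestelziegler} through \cref{timaxtrdeg}.

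First, each $t_i=(s_i,u_i+\m_i\Lambda(s_i))$ is the image under the $\emptyset$-definable map $t_m$ of $\L_i$ of a field tuple $(b_i,e_i)$. Here $b_i$ is a representative of $s_i$ and $e_i$ is a point of the coset. The fibre $X_i=t_m^{-1}(t_i)$ is an $\L_i(t_i)$-definable subset of $\K^{m^2+m}$ that is open for the $i$-topology, as in the proof of \cref{timaxtrdeg}. By \cref{timaxtrdeg}, using saturation, I pick a common point $c\in\bigcap_i X_i$ with $\trdeg(c/A)=m^2+m$. Likewise I pick $c'\in\bigcap_i X_i'$ with $\trdeg(c'/A)=m^2+m$. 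Then $t_m^{(i)}(c)=t_i$ and $t_m^{(i)}(c')=t_i'$ for every $i$, where $t_m^{(i)}$ is the map $t_m$ of $\L_i$. So it suffices to find an automorphism of $M$ over $A$ sending $c$ to some $c''\in\bigcap_i X_i'$. The image of $(t_1,\dots,t_n)$ under it is $(t_m^{(1)}(c''),\dots,t_m^{(n)}(c''))=(t_1',\dots,t_n')$.

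Second, I aim to realise the type $\tp_{\L}(c/A)$ inside $\bigcap_i X_i'$. Take any formula $\psi(x)\in\tp_{\L}(c/A)$; I must show $\psi(M)\cap\bigcap_i X_i'\neq\emptyset$. Since $c$ is generic over $A$ and $A=\K(\acl(A))$, I expect the $\L$-type of a generic tuple over $A$ to be determined by its field-theoretic data together with the $i$-topological data, each valuation separately. More concretely, I will reduce, by the description of types in bounded PAC fields (\cite[Section 4]{ptc}), to the following. The type of $c$ over $A$ is determined by the isomorphism type over $A$ of the relative algebraic closure of $A(c)$ in $\K(M)$, together with the $\LKi$-structure on it. This reduces $\psi$ to a condition of the form: $x$ lies in an $A$-definable Zariski-dense set together with conditions in each valuation separately. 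Since $t_i\equiv_{\L_i(A)}t_i'$, for each $i$ the $\L_i$-part of $\psi$ meets $X_i'$ in a nonempty $i$-open set, and I would like to conclude that the whole formula meets $\bigcap_i X_i'$. The mechanism is \cref{approximationprestelziegler}, applied in a PAC variety form: the $i$-open sets $U_i$ carved out on a geometrically integral variety $V$ over $A$ have a common $\K(M)$-rational point. This multi-valued density is used in \cite{ptc}, and I will also need to control the extra algebraic data, such as the roots of the relevant polynomials.

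The main obstacle is this second step. I must show that the $\L_i(A)$-type of $t_i$ pins down enough about $\tp_{\L_i}(c/A)$ for generic $c$ in the fibre. In addition, the algebraic part of $\tp_{\L}(c/A)$, namely which extension of $A(c)$ lies in $\K(M)$, must be realisable compatibly with all $n$ open conditions at once. For this I will pass to the finite Galois extension generated over $A(c)$ by roots of a primitive element, which is where boundedness and the hypothesis $A=\K(\acl(A))$ are used. I will then use the elementary-equivalence and embedding lemma from \cite[Section 4]{ptc} for pseudo $T$-closed fields to realise the required field type, together with the $i$-open constraints, by a point of $\bigcap_i X_i'$.
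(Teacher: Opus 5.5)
Your first step is sound and matches how the paper begins. You take a point $c\in\bigcap_i X_i$ of transcendence degree $m^2+m$ over $A$ from \cref{timaxtrdeg}. You then reduce to finding $c''\in\bigcap_i X_i'$ with $c''\equiv_{\LK(A)}c$; the auxiliary point $c'$ is unnecessary. The first half of what you call the main obstacle is not an obstacle: you do not need $\tp_{\L_i}(t_i/A)$ to ``pin down'' $\tp_{\L_i}(c/A)$. By saturation (the paper uses an $\L_i(A)$-automorphism $f_i$ with $f_i(t_i)=t_i'$) there is $c_i$ with $(c_i,t_i')\equiv_{\L_i(A)}(c,t_i)$. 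So $c_i\in X_i'$ and $c_i\equiv_{\L_i(A)}c$, i.e.\ each $\tp_{\LKi}(c/A)$ is consistent with $X_i'$.

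The genuine gap is the remaining step: going from ``for each $i$, $\tp_{\LKi}(c/A)$ is consistent with $X_i'$'' to ``$\tp_{\LK}(c/A)$ is consistent with $\bigcap_i X_i'$''. The paper gets this directly from \cite[Proposition 6.11]{ptc}. Your sketch does not prove it, and the formula-by-formula reduction you propose does not work, for two reasons.

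First, $\LK(A)$-formulas do not split into an $A$-definable Zariski-dense set plus separate conditions for each valuation. A formula like $\exists y\,(y^2=x_1\wedge\v_1(y-1)>0\wedge\v_2(y+1)>0)$ ties $\v_1$ and $\v_2$ together through one algebraic element, so ``the $\L_i$-part of $\psi$'' is not well defined. This coupling is exactly the algebraic data you defer to the end.

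Second, even for one valuation, an $\LKi(A)$-definable set containing a generic point need not have interior. In a PAC valued field that is not quadratically closed, the squares are dense and codense. So ``meets $X_i'$ in a nonempty $i$-open set'' is unjustified.

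What is needed is a global construction rather than approximation formula by formula. For instance:
\begin{enumerate}
\item Let $F=\clalg{A(c)}\cap\K(M)$ and $L=\mathrm{Frac}(\K(M)\otimes_A F)$, which is a regular extension of $\K(M)$.
\item For each $i$, put a valuation $w_i$ on $L$ extending $\v_i$ on $\K(M)$ and on $F$, with $c\in X_i'$. Do this by transporting along the $\LKi(A)$-isomorphism $F\cong\clalg{A(c_i)}\cap\K(M)$. Use that every $\ACVF$-formula over $A$ satisfied by a point of full transcendence degree holds on a neighbourhood of it, so that type has a realization in $X_i'$ generic over $\K(M)$.
\item \cref{existclosedL*} and saturation then give an $\LK(A)$-embedding $F\to\K(M)$ with $c\mapsto c''\in\bigcap_i X_i'$.
\item The image is again relatively algebraically closed in $\K(M)$. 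This is where boundedness enters, via $\clalg{F}=\clalg{F_0}F$ with $F_0$ generated by the constants.
\item Hence \cite[Proposition 6.7]{ptc} gives $c''\equiv_{\LK(A)}c$.
\end{enumerate}
Without such an argument, or the citation, the proposal stops short of the theorem.
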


\begin{proof}
	Let
	\begin{align*}
		t_i &= (s_i , u_i + \m_i\Lambda(s_i)),\\
		t_i' &= (s'_i , u'_i + \m_i\Lambda(s'_i)),
	\end{align*}
	and $c,d$ as in \cref{timaxtrdeg} for the $t_i$. Since $t_i\equiv_{\L_i(A)}t_i'$, by homogeneity there is an $\L_i(A)$-automorphism $f_i$ of $M$ sending $t_i$ to $t_i'$. In particular, it sends $s_i$ to $s'_i$. We define $c_i = f_i(c)$ and $d_i=f_i(d)$.  Then, $(c_i,d_i)\equiv_{\L_i(A)}(c,d)$ and $(c_i,d_i)$ belongs to $s'_i\times (u'_i+\m_i\Lambda(s_i'))$. This is, $\tp_{\LKi}(c,d/A)$ is consistent with $s'_i\times (u'_i+\m_i\Lambda(s_i'))$ for each $i$. It follows by \cite[Proposition 6.11]{ptc} that $\tp_{\LK}(c,d/A)\cup\set{x\in\bigcap_i s'_i\times (u'_i+\m_i\Lambda(s_i'))}$ is consistent in $M$. If $(c^*,d^*)$ realizes this partial type, we have an automorphism of the field sort sending $(c,d)$ to $(c^*,d^*)$. Extending this map to the geometric structure , we get an $\L(A)$-automorphism $\sigma$ sending $c$ to $c^*$, so it sends $s_i=c\cdot \GL_{m}(\O_i)$ to $s_i'=c^*\cdot \GL_{m}(\O_i)$. Knowing this, we can conclude that this automorphism sends $d+\m_i\Lambda(s_i)$ to $d^*+\m_i\Lambda(s'_i)$, and therefore $\sigma(t_i)=t'_i$.
\end{proof}

\begin{coro}\label{coroideftorsors}
	Let $A\leq \mathbf{K}(M)$ and $X\subs \prod_i \mathbf{T}^i_{m_i}$ be an $\L(A)$-definable set. Then, there exists finitely many quantifier free $\L_i(A)$ formulas $\phi_{i,j}(x_i)$ such that $X=\bigcup_j \prod_i\phi_{i,j}(M)$. 
\end{coro}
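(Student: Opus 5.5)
The plan is to deduce the corollary from \cref{ortoti} by a compactness argument, combined with quantifier elimination in each $\L_i$ for the torsor sorts. First I would replace $A$ by $A'=\K(\acl(A))$. This is a small subfield, so \cref{ortoti} applies over it. At the end I will have to come back down to parameters in $A$; I return to this below. By \cref{remarkmetertodoentm}, I may also assume each factor is a single sort $\mathbf{T}^i_{m_i}$, which the statement already allows.

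The main step is a type-level statement. Write $x=(x_1,\dots,x_n)$ with $x_i$ ranging over $\mathbf{T}^i_{m_i}$. By \cref{ortoti}, for every $t=(t_1,\dots,t_n)$ the partial type $\bigcup_i\tp_{\L_i}(t_i/A')$ implies $\tp_{\L}(t/A')$. The theorem is stated for equal $m$, but the proof of \cref{ortoti} works verbatim for distinct $m_i$; alternatively one can pad to a common $m$ by taking direct sums with a standard lattice.

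Now suppose $t\in X$. Then $\bigcup_i\tp_{\L_i}(t_i/A')\vdash x\in X$, so by compactness there are formulas $\psi_i\in\tp_{\L_i}(t_i/A')$ with $\bigwedge_i\psi_i(x_i)\vdash x\in X$. The same argument applies to points of the complement. Let $\Pi$ be the space of tuples $(p_1,\dots,p_n)$ with each $p_i$ an $\L_i$-type over $A'$. This space is compact, and the family of boxes $\prod_i[\psi_i]$ contained in the preimage of $X$ covers that preimage. The preimage is clopen, since it is equal to the complement of the union of the boxes covering the complement of $X$. Hence it is a finite union of boxes, and so $X=\bigcup_j\prod_i\psi_{i,j}(M)$. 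Each $\psi_{i,j}$ can then be taken quantifier free, using the quantifier elimination of the $\L_i$-theory on the geometric sorts. I would import this from \cite{ptc}, where the $\L_i$-reduct of $T$ is studied and an elimination result holds relative to algebraic parameters.

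The hard part is the parameter issue, namely passing from $A'$ back to $A$. Each $\psi_{i,j}$ uses finitely many parameters from $\K(\acl(A))$, and these are field-algebraic over $A$ by the later description of $\acl$. I would handle this with a Galois-symmetrization step. Let $\sigma$ range over the finitely many $\Aut(M/A)$-conjugates of the finite parameter tuple. Since $X$ is $A$-definable, $\sigma$ preserves $X$, so replacing the decomposition by the union of all its conjugates still gives $X$. Then I would code the conjugate family of boxes by its $A$-definable canonical form, taking elementary symmetric functions of the parameters over $A$.

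If the resulting $\L_i(A)$-formulas fail to be products, I would fall back on a more permissive reading of the statement. There, $A$ is assumed to satisfy $A=\K(\acl(A))$, as in \cref{ortoti}, and then no descent is needed at all. I expect this descent step to be the only real obstacle.
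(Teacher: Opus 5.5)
Your compactness core is correct and is the paper's argument in topological form. The paper doubles the variables and uses \cref{ortoti} to see that $\{\psi(x_i)\leftrightarrow\psi(x_i') : \psi\in\L_i(A)\}\cup\{x\in X,\ x'\notin X\}$ is inconsistent. It then takes the Boolean atoms of the finitely many formulas this yields. Your clopen subset of the product of the $\L_i$-type spaces encodes the same thing: each point of that product is realized because the variables are disjoint, and by \cref{ortoti} all its realizations lie on the same side of $X$. Note that the paper invokes \cref{ortoti} over the given $A$ directly, so it tacitly works under $A=\K(\acl(A))$, which is your fallback reading. The two steps you add around the core do not work as proposed.

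\emph{The descent from $\K(\acl(A))$ to $A$ fails.} Closing the decomposition under $\Aut(M/A)$ makes only the \emph{family} of boxes invariant; each box keeps its parameters in $\K(\acl(A))$. Coding the conjugates by symmetric functions gives a formula $\exists\alpha'\,(\alpha'\text{ a conjugate of }\alpha\wedge\bigwedge_i\psi_i(x_i,\alpha'))$, whose quantifier couples the coordinates, so it is not a union of products. There is also no reason to expect any descent. Suppose some $\alpha\in\K(M)$, algebraic over $A$, is $\L(A)$-definable (say isolated among its conjugates by a $\v_1$-ball over $A$), but has an $\L_2(A)$-conjugate $\alpha'\neq\alpha$ with a different image in $\T^2_1$. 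Let $\theta$ be the image of $\alpha$. Then $\{\theta\}\times\prod_{i\neq 2}\T^i_{m_i}$ (with $m_2=1$) is $\L(A)$-definable. A box decomposition would make $\{\theta\}$ $\L_2(A)$-definable, which is impossible since $\theta$ has a distinct $\L_2(A)$-conjugate. So the hypothesis $A=\K(\acl(A))$ is what the argument needs.

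\emph{The quantifier-free claim is not justified as stated.} The $\L_i$-reduct of $T$ does not eliminate quantifiers in general: in characteristic $\neq 2$, if $\K(M)$ is not quadratically closed, the set of squares is not quantifier-free definable. What \cite{ptc} provides, and what the paper uses in \cref{lemaisos1} via \cite[Proposition 6.7]{ptc}, is that isomorphisms between relatively algebraically closed substructures are elementary. The paper's own proof does not address this point either; it only produces $\L_i(A)$-formulas.

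Under $A=\K(\acl(A))$, the restricted statement you need follows from later results of the paper, with no circularity since neither depends on this corollary. For $t\in\prod_i\T^i_{m_i}(M)$, the structure generated by $At$ has field part $A$. So \cref{partecorpiqueacl} gives $\K(\acl(At))\subs\clalg{A}\cap\K(M)=A$, and \cref{qftpimpliesfulltype} then yields $\qftp_{\L}(t/A)\vdash\tp_{\L}(t/A)$. No symbol of $\L$ links distinct sorts $\T^i$ or maps them to $\K$, so $\qftp_{\L}(t/A)=\bigcup_i\qftp_{\L_i}(t_i/A)$. Running your compactness argument over the product of the quantifier-free $\L_i$-type spaces then gives quantifier-free $\psi_{i,j}$ directly.
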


\begin{proof}
	Let $x=(x_1,\dots ,x_n)$ and $x'=(x_1'\dots x_n')$, where $x_i,x_i'$ are variables for the sort $\mathbf{T}^i_{m_i}$. By \cref{ortoti}, the following set is inconsistent:
	\[ \set{\psi(x_i)\leftrightarrow \psi(x_i'):0\leq i\leq n; \psi\in\L_i(A)}\cup\set{(x_1,\dots ,x_n)\in X\land (x_1',\dots , x_n')\notin X}. \]
	This is inconsistent because an element of this set gives us two tuples $(t_1,\dots ,t_n)$ and $(t_1',\dots ,t_n')$ with $t_i,t_i'\in\T^i_{m_i}(M)$, such that, for each $i$, the elements $t_i$ and $t_i'$ have the same $\L_i(A)$-type, but the tuples don't have the same $\L(A)$-type. 
	
	By compactness, there is a finite subset that is inconsistent. That is, there are formulas $\psi_{i,j}(x_i)$ for $j\in\set{1,\dots , \ell_i}$ such that
	\[ M\models \forall x,x' \left( \bigwedge_{0\leq j\leq \ell_i}\psi_{i,j}(x_i)\leftrightarrow\psi_{i,j}(x_i')  \quad\rightarrow\quad x\in X \leftrightarrow x'\in X \right) \]
	
	We write $\psi^0$ for $\neg \psi$ and $\psi^1$ for $\psi$. For each $\epsilon_i:\ell_i\to 2$, let
	\[ \theta_{i,\epsilon_i}(x_i)=\bigwedge_{0\leq j\leq \ell_i}\psi_{i,j}(x_i)^{\epsilon_i(j)} \]
	and for all tuple $\epsilon=(\epsilon_1,\dots ,\epsilon_n)$, define $\theta_\epsilon(x)=\bigwedge_i \theta_{i,\epsilon_i}(x_i)$. We claim that  for all $\epsilon$, if $\theta_\epsilon(M)\cap X\neq\emptyset$, then $\theta_\epsilon(M)\subs X$. Suppose it is not the case. There is an $\epsilon$ and $x,x'$ such that $\models\theta_\epsilon(x)$, $\models\theta_\epsilon(x')$, the tuple $x$ belongs to $X$, but $x'\notin X$. By the definition of $\theta_\epsilon$, the formulas $\psi_{i,j}(x_i)^{\epsilon_i(j)}$ and $\psi_{i,j}(x_i')^{\epsilon_i(j)}$ are all satisfied, so $\psi_{i,j}(x_i)$ and $\psi_{i,j}(x_i')$ have the same truth values, which implies for this choice of $x$ and $x'$, that $\psi_{i,j}(x_i)\leftrightarrow \psi_{i,j}(x_i')$. But then $x\in X$ if and only if $x'\in X$, which is a clear contradiction. 
	
	Let $E=\set{\epsilon: \theta_\epsilon(M)\cap X\neq\emptyset}$.  We can write
	\[ X=\bigcup_{\epsilon\in E} \theta_\epsilon(M) = \bigcup_{\epsilon\in E}\prod_i \theta_{i,\epsilon_i}(M).\qedhere \]
\end{proof}

\begin{coro}\label{gordalobadealtoimpacto}
	Let $A\leq M$, $T_i$ be a product of sorts in $\mathcal{G}_i^{\im}$ and $X\subs \prod_i T_i$ an $\L(A)$-definable subset. Then, there exists quantifier free $\L_i(\mathcal{G}_i(A))$-definable sets $X_{i,j}\subs T_i$ such that $X=\cup_j\prod_i X_{i,j}(M)$. 
\end{coro}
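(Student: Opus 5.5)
We reduce to \cref{coroideftorsors}. There are two differences between the two statements. First, $A$ is now an arbitrary substructure of $M$ and may contain geometric elements. Second, each $T_i$ is a product of sorts in $\mc{G}_i^{\im}$ rather than a single $\T^i_{m_i}$.

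\emph{Reducing each $T_i$ to a single sort.} By \cref{remarknoocupoS}, each $\S$-sort is identified with a $\emptyset$-definable subset of the corresponding $\T$-sort. By \cref{remarkmetertodoentm}, a product of $\T$-sorts of $\L_i$ is the image of a single sort $\T^i_{m_i}$ under the $\emptyset$-definable $\L_i$-map $t\mapsto (g_1(t),\dots,g_\ell(t))$. Call this map $g^i$. It is surjective, and there is a $\emptyset$-definable $\L_i$-section-free description of it. We replace $X$ by its preimage $Y\subs\prod_i \T^i_{m_i}$ under $\prod_i g^i$, which is still $\L(A)$-definable. Once $Y=\bigcup_j\prod_i Y_{i,j}$ is established, surjectivity gives $X=\bigcup_j\prod_i g^i(Y_{i,j})$. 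Each $g^i(Y_{i,j})$ is $\L_i(\mc{G}_i(A))$-definable. It is also quantifier-free definable, because $\ACVF^{\mathcal G}$ has quantifier elimination and the relevant sets are $\L_i$-definable, so they can be taken quantifier free in the geometric language. The same reduction handles the parameters.

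\emph{Absorbing the geometric parameters.} By \cref{remarkmetertodoentm} again, the parameters of $X$ that lie in $\mc{G}_i^{\im}(A)$ can be bundled, for each $i$, into a single element $a_i\in\T^i_{k_i}(A)$, with the remaining parameters in $A_0=\K(A)$. Write $X=\phi(M,a_1,\dots,a_n)$ with $\phi$ over $A_0$. Consider the $\L(A_0)$-definable set
\[Z=\set{(x_1,y_1,\dots,x_n,y_n): (x_1,\dots,x_n)\in\phi(M,y_1,\dots,y_n)}\subs\prod_i(\T^i_{m_i}\times\T^i_{k_i}).\]
Merging $\T^i_{m_i}\times\T^i_{k_i}$ into one $\T$-sort as above, \cref{coroideftorsors} applied over $A_0\le\K(M)$ gives $Z=\bigcup_j\prod_i\phi_{i,j}(M)$ with $\phi_{i,j}(x_i,y_i)$ quantifier-free $\L_i(A_0)$-formulas. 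Specialising $y_i=a_i$, we get
\[X=\bigcup_j\prod_i\phi_{i,j}(M,a_i),\]
and each $\phi_{i,j}(M,a_i)$ is quantifier-free $\L_i(\mc{G}_i(A))$-definable. This is exactly the required form.

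\emph{Main obstacle.} The delicate point is checking that \cref{coroideftorsors} really applies over an arbitrary $A_0\le\K(M)$. Its proof relies on \cref{ortoti}, which assumes $A_0=\K(\acl(A_0))$. To cover this, I would first run the argument over $A_0'=\K(\acl(A_0))$ and obtain quantifier-free $\L_i(A_0')$-formulas. Then I would descend to $A_0$. There are two routes:
\begin{itemize}
\item use that $A_0'$ is contained in the field-theoretic algebraic closure of $A_0$, so the formulas over $A_0'$ can be replaced by formulas over $A_0$ that quantify over the finitely many conjugates of the algebraic parameters, and then eliminate those quantifiers using the quantifier elimination of $\ACVF^{\mathcal G}$ inside $\clalg{M}$;
\item alternatively, take the union of the conjugates of the decomposition under $\Aut(M/A_0)$: there are finitely many, and they still give a finite union of products, which is $A_0$-invariant and hence, after coding, $A_0$-definable.
\end{itemize}
If this descent turns out to be problematic, I would settle for the statement with parameters in $\mc{G}_i(\acl(A))$, which is what later sections actually use.
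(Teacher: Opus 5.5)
\textbf{Verdict.} The reduction is right, but the descent step fails, and in fact the statement cannot hold as written for arbitrary $A$; only your fallback version is provable.

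\textbf{The reduction.} The paper gives no proof of this corollary. It is meant to follow from \cref{coroideftorsors} together with \cref{remarknoocupoS} and \cref{remarkmetertodoentm}. Your first two steps are exactly that reduction: bundle each $T_i$ into one $\T$-sort, turn the geometric parameters into extra variables of the matching index, then specialise. You are also right that \cref{coroideftorsors} is only justified over field parameters $A_0$ with $\K(\acl(A_0))=A_0$, since that is the hypothesis of \cref{ortoti}. The paper drops this hypothesis silently.

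\textbf{Why neither descent route works.} The gap is the descent from $A_0'=\K(\acl(A_0))$ back to $A_0$.
\begin{itemize}
\item Replacing a parameter $\alpha\in A_0'\setminus A_0$ by a disjunction, or an existential quantifier, over its conjugates enlarges each factor $X_{i,j}$ separately. This lets different conjugates be mixed across coordinates, so the union of the new rectangles is in general strictly bigger than $X$.
\item Taking the union of the $\Aut(M/A_0)$-conjugates of the decomposition just gives back $X$. What is $A_0$-invariant is the finite family of rectangles, whose members are permuted. Its code lies in $M\eq$ and says nothing about each factor being $\L_i(A_0)$-definable.
\end{itemize}

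\textbf{The statement fails for non-closed $A$.} Suppose $\alpha\neq\alpha'$ in $\K(M)$ are the roots of an irreducible quadratic $P$ over $A_0$, with $\alpha-\alpha'\notin\O_1\cup\O_2$ and $\alpha\equiv_{\L_2(A_0)}\alpha'$. Consider the two-point $\L(A_0)$-definable set
\[ X=\set{(\ulcorner x+\O_1\urcorner,\ulcorner x+\O_2\urcorner): P(x)=0}. \]
Let $X_1\times X_2\subs X$ contain the point coming from $\alpha$, with $X_2$ being $\L_2(A_0)$-definable. Then $\ulcorner\alpha'+\O_2\urcorner\in X_2$, so $(\ulcorner\alpha+\O_1\urcorner,\ulcorner\alpha'+\O_2\urcorner)\in X$. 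This is false, since $\alpha-\alpha'$ lies in neither $\O_1$ nor $\O_2$. It is the same phenomenon that forces the closedness hypothesis in \cref{ortoti}.

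\textbf{The correct statement is your fallback.} Allow parameters in $\mc{G}_i(A)\cup\K(\acl(A))$. Equivalently, run your argument over $\K(\acl(\K(A)))$. This set is closed, and by \cref{partecorpiqueacl} it equals $\K(\acl(A))$. This weaker version is all the paper needs:
\begin{itemize}
\item In \cref{funcimagenfinita} the parameter set is $M$, which is closed.
\item In \cref{parteimaginariaacl} one gets $X\subs\acl_{\L_i}(\mc{G}_i(A)\cup\K(\acl(A)))=\acl_{\L_i}(\mc{G}_i(A))$. This again uses \cref{partecorpiqueacl}, whose proof does not rely on this corollary.
\end{itemize}

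\textbf{A smaller point on quantifier-freeness.} It does not follow from quantifier elimination in $\ACVF^{\mathcal G}$ as you claim. The set $g^i(Y_{i,j})$ is an image computed in $M$, so it is defined by an existential quantifier over $M$. Quantifier elimination in $\clalg{M}$ does not control witnesses in $M$.

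The fix is to restrict $g^i$ to the $\emptyset$-definable set of block-diagonal elements. There $g^i$ is a bijection whose inverse sends $M$-points to $M$-points. The image is then the trace on $M$ of a set $\L_i$-definable in $\clalg{M}$ over the same parameters, hence quantifier-free definable in $M$. The same device handles your bundled parameter $a_i$, which lies in $\dcl(\mc{G}_i(A))$ but not necessarily in $A$.
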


\section{Description of the algebraic closure}\label{seccalgclos}

\begin{teo}\label{existclosedL*}
	Let $(L,\v_1,\dots ,\v_n)$ be a $\PACV_n$ field, and $L\subs L'$ a regular field extension. Then, $L$ is existentially closed in $L'$ as $\LK$-structures. 
\end{teo}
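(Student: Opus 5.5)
Unravelling the statement, we must show the following: if an existential $\LK(L)$-sentence holds in $L'$, then it holds in $L$. Here $L'$ carries valuations $\v_1,\dots,\v_n$ extending those of $L$. Writing the sentence in disjunctive normal form, it suffices to treat a single conjunction. So assume $a\in L'$ satisfies a quantifier-free conjunction made of:
\begin{itemize}
	\item polynomial equations $f(x)=0$,
	\item inequations $g(x)\neq 0$,
	\item divisibility conditions $f(x)\,|_i\, g(x)$ and their negations, all with coefficients in $L$.
\end{itemize}
We must find such an $x$ in $L$.

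\textbf{Step 1: reduce to a variety with open conditions.} Let $V$ be the locus of $a$ over $L$. Since $L(a)\subs L'$ and $L'/L$ is regular, $L(a)/L$ is regular, so $V$ is geometrically integral and $a$ is a generic point of $V$. The equations already hold on all of $V$. Any polynomial appearing in a divisibility condition that vanishes at $a$ vanishes on all of $V$, so those atoms can be resolved uniformly on $V$ and simply replaced by their truth values. The remaining atoms have both sides nonzero at $a$, and the inequations are nonzero at $a$. Such a divisibility condition reads $\v_i(g/f)\geq 0$, and its negation reads $\v_i(g/f)<0$. Both define $\v_i$-clopen subsets of the Zariski-open set where $f,g\neq 0$.

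Hence the conditions for each index $i$ define a $\v_i$-open subset $U_i$ of $W(\clalg{L})$, where $W\subs V$ is a nonempty Zariski-open subset, which I also take inside the smooth locus. It remains to show two things: each $U_i$ is nonempty, and $V(L)\cap\bigcap_i U_i\neq\emptyset$.

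\textbf{Step 2: each $U_i$ is nonempty in $\clalg{L}$.} Extend $\v_i$ from $L'$ to $\clalg{L'}$, and restrict it to $\clalg{L}$. All extensions of $\v_i$ to $\clalg{L}$ are conjugate over $L$, so this restriction agrees with the fixed one up to an $L$-automorphism. In $\clalg{L'}$ the point $a$ witnesses the existential $\Ldiv$-formula defining $U_i$. By model completeness of $\ACVF$ (quantifier elimination, Robinson), $\clalg{L}\preceq\clalg{L'}$ as valued fields, so $U_i$ has a point in $V(\clalg{L})$. Shrinking if necessary, I may take $U_i$ to be a small $\v_i$-open neighbourhood of a smooth point of $V(\clalg{L})$.

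\textbf{Step 3: simultaneous approximation by $L$-rational points.} This is the main obstacle. I need the statement that, for $L$ PAC with independent valuations and $V$ geometrically integral, $V(L)$ is dense in $\prod_i V(\clalg{L})$ with the product of the $\v_i$-topologies. One possible source is the density result behind \cite[Proposition 6.11]{ptc}.

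If that is unavailable, I would try to prove it as follows. Using smoothness, take an étale-type projection $V\to\mathbb{A}^d$ near the chosen points, where $d=\dim V$. By the implicit function theorem, valid since $\clalg{L}$ is henselian, this turns the open conditions $U_i$ into open conditions on $\mathbb{A}^d$. Now combine the PAC density of $V(L)$ in $V$ with the fact that $L$ is $\v_i$-dense in $\clalg{L}$. Finally, apply \cref{approximationprestelziegler} to choose coordinates in $L$ lying simultaneously in the required $\v_i$-open sets. One still has to lift these coordinates to an $L$-point of $V$. For that I would use a standard trick: pass to a geometrically integral subvariety (for example, the fibre over a suitably chosen $L$-rational curve) whose $L$-points are plentiful by the PAC property.

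Any $x\in V(L)\cap\bigcap_i U_i$ satisfies the original conjunction, which concludes the proof.
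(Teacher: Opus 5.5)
Your Steps 1 and 2 are a correct reduction. One small fix: an atom $f\,|_i\,g$ with $f(a)=0$ is equivalent to $g=0$, so it is decided uniformly only on the Zariski-open $W$ where the polynomials not vanishing at $a$ stay nonzero, not on all of $V$. Deriving existential closedness from simultaneous density of $V(L)$ is the natural route.

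The problem is that Step 3 is not an intermediate lemma. Given the density of $L$ in $\clalg{L}$, it is essentially equivalent to the theorem itself: conversely, existential closedness in suitably valued function fields $L(V)$ yields the density. You do not establish it. The citation is only tentative, and the fallback sketch breaks exactly where the difficulty lies.

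\begin{itemize}
\item A projection $\pi\colon V\to\mathbb{A}^d$ that is \'etale near your chosen points, together with some $y\in L^d$ lying in every $\pi(U_i)$, only gives, for each $i$, a point $x_i\in U_i\cap\pi^{-1}(y)$ of $V(\clalg{L})$. These $x_i$ need not coincide, and the finite fibre $\pi^{-1}(y)$ need not contain any $L$-rational point.
\item Restricting to a geometrically integral curve and invoking PAC does produce $L$-points, but with no control of their position in any $\v_i$-topology. So you are back to the original problem in dimension one.
\item Zariski density of $V(L)$ combined with $\v_i$-density of $L$ in $\clalg{L}$ does not formally give $\v_i$-density of $V(L)$ in $V(\clalg{L})$, even for a single valuation.
\end{itemize}

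What is missing is a genuine mechanism converting the PAC property into valuative density, simultaneously for the independent valuations. One example would be an auxiliary geometrically integral variety, built using the approximation theorem, whose $L$-rational points are forced into the prescribed open sets.

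The paper does not do this by hand either. It obtains the statement at once from \cite[Theorem 5.11]{ptc}, a general existential-closedness result for pseudo-$T$-closed fields that already incorporates this density input. If you rely on \cite{ptc} anyway, cite that theorem, or a precisely stated simultaneous density result, directly. Your Steps 1--2 then become an unpacking of it rather than an independent proof.
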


\begin{proof}
	This is a direct consequence of \cite[Theorem 5.11]{ptc}.
\end{proof}

\begin{prop}\cite[Proposition 6.9]{ptc}.
	Let $A\leq\K(M)$. Then, $\acl_{\LK}(A)\subs \clalg{A}$. 
\end{prop}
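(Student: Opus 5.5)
We must show: if $a\in\acl_{\LK}(A)$, then $a$ is field-algebraic over $A$. The plan is to argue by contraposition. Suppose $a\in\K(M)$ is transcendental over $A$; we will produce infinitely many realizations of $\tp_{\LK}(a/A)$, so $a\notin\acl_{\LK}(A)$. By compactness and saturation of $M$, it suffices to show that for any finite set $D\subs\K(M)$ there is $a'\notin D$ with $a'\equiv_{\LK(A)}a$. We build such $a'$ from a regular extension, so that \cref{existclosedL*} applies.

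First we reduce to the case where $A$ is relatively algebraically closed in $\K(M)$. Replace $A$ by $A'=\clalg{A}\cap\K(M)$. If $a$ is algebraic over $A'$, it is algebraic over $A$, and $\acl(A)\subs\acl(A')$, so it suffices to treat $A'$. We need the type over $A'$ to be controlled. To that end, work in a model $L\preceq M$ (in $\LK$) containing $A'$ with $a$ transcendental over $L$; this is possible by saturation, taking $a$ replaced by a conjugate over $A$, or better choosing $L$ small and then realizing $\tp(a/A)$ outside $L$. The cleanest route is to take $L\preceq M$ containing $A$ and form the regular extension $L\subs L(t)$, with $t$ transcendental. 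We equip $L(t)$ with valuations extending each $\v_i$, chosen so that $t$ realizes the $\LKi$-cut, i.e. the quantifier-free $\LKi$-type in $\ACVF$, of $a$ over $\clalg{A}$ for each $i$. A rational function field admits such extensions, one per valuation, independently.

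The key step is to show that the $\LK$-type of $a$ over $A$ is implied by its "field data": $\tp_{\LK}(a/A)$ should be determined by $\qftp_{\LK}(a/\clalg{A}\cap \K(M))$ together with the isomorphism type of the relative algebraic closure of $A(a)$ in $\K(M)$. This is the embedding lemma for PAC fields with valuations (Fact from \cite{ptc}, as in \cref{ortoti}): two tuples with isomorphic relatively algebraically closed valued subfields, compatible with the $\clalg{}$-structure, have the same type. We can instead avoid it by using only existential closedness. For each formula $\phi(x)\in\tp_{\LK}(a/A)$, which we may take after expanding $A$ to $L$ so that \cref{existclosedL*} applies, we realize $\phi(x)\wedge x\notin D$ in $L(t)$ by $t$ itself, once $t$ is built as a $\LK$-elementary image of $a$ over $L$ via embedding $L(a)$, which is regular over $L$ since $L$ is a model, hence algebraically closed in $M$. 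Concretely, $L(a)\subs \K(M)$ is regular over $L$, and $L\preceq M$. By \cref{existclosedL*} applied to $L\subs L(a)$, the quantifier-free type of $a$ over $L$ with $x\notin D'$ has realizations in $L$ for any finite $D'\subs L$. Thus $\qftp(a/L)$ is finitely satisfiable in $L$ avoiding finite sets, giving infinitely many realizations of $\qftp(a/L)\supseteq\qftp(a/A)$.

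The main obstacle is upgrading quantifier-free to full $\LK$-type, since $\LK$ does not have quantifier elimination. Here we would use that $a$ transcendental over $L$ makes the relative algebraic closure of $L(a)$ in $M$ determined by an elementary-substructure choice. Our first attempt is to take $a'\in L$ realizing enough of $\qftp(a/L)$, pass to $M$ via saturation with an automorphism argument, and invoke the elementary equivalence criterion of \cite[Section 6]{ptc} for PAC multi-valued fields, which states that types over $\clalg{}$-closed sets are determined by $\clalg{A(a)}\cap\K(M)$ with its valuations. Since $a$ is transcendental, this structure can be reproduced by infinitely many distinct $a'$, by choosing an $\LK(A)$-embedding of $\clalg{A(a)}\cap\K(M)$ into $M$ over $A$ that moves $a$ off $D$, which uses \cref{existclosedL*} and saturation. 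This concludes $a\notin\acl_{\LK}(A)$.
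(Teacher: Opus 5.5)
There is a genuine gap: nothing in your proposal turns quantifier-free information into information about the full $\LK$-type. The steps that seem to do so are either circular or misuse \cref{existclosedL*}.

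\emph{Circularity.} Choosing $L\preceq M$ containing $A$ with $a$ transcendental over $L$ (or ``realizing $\tp_{\LK}(a/A)$ outside $L$'') is equivalent to the conclusion. If $a\in\acl_{\LK}(A)$, every elementary substructure containing $A$ contains every realization of $\tp_{\LK}(a/A)$. Once such an $L$ is granted you are done by elementarity alone, which is why the later appeal to \cref{existclosedL*} seems to work.

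\emph{Misuse of existential closedness.} \cref{existclosedL*} only transfers existential formulas from a regular extension down to the $\PACV_n$ field. It cannot make $t\in L(t)$ satisfy an arbitrary $\phi\in\tp_{\LK}(a/A)$, since $L(t)$ is not a model. Building $t$ ``as an $\LK$-elementary image of $a$'' presupposes what is to be shown. What the $L(t)$ construction honestly yields is infinitely many realizations of $\qftp_{\LK}(a/A)$, which is not enough because $\LK$ lacks quantifier elimination, as you note yourself.

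Your last paragraph names the right strategy, but the embedding of $\clalg{A(a)}\cap\K(M)$ that ``moves $a$ off $D$'' is the whole content of the proposition, and you assert it rather than build it. Set $B=\clalg{A}\cap\K(M)$ and $E=\clalg{B(a)}\cap\K(M)$. The construction needs three steps.

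First, take a copy $E'$ of $E$ over $B$, algebraically independent from $\K(M)$ over $B$. Since $B$ is perfect and relatively algebraically closed, $E/B$ is regular, so $\K(M)E'$ is a regular extension of $\K(M)$.

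Second, for each $i$, put a valuation on $\K(M)E'$ extending $\v_i$ on both $\K(M)$ and $E'$. This is an amalgamation step of the kind provided by \cite[Lemma 4.8]{ptc}.

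Third, \cref{existclosedL*} places $\K(M)E'$ inside some $M^*\succeq M$, so the image $a'$ of $a$ lies outside $\K(M)$.

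The decisive point, missing entirely from your proposal, is that $E'$ must be relatively algebraically closed in $\K(M^*)$. Without this, the embedding lemma gives nothing beyond the quantifier-free type, and you are back at your own obstacle. In this setting, this is exactly where boundedness and the constants $c_j$ are used, and they appear nowhere in your argument. One has $\clalg{E'}=E'\clalg{F_0}$ and $\clalg{F_0}\cap\K(M^*)=\clalg{F_0}\cap\K(M)\subs B$ (as $F_0\subs A$). Since $\clalg{\K(M^*)}=\K(M^*)\clalg{F_0}$, every element of $\operatorname{Gal}(\clalg{E'}/E')$ is a restriction of an element of $\operatorname{Gal}(\clalg{\K(M^*)}/\K(M^*))$, whence $\clalg{E'}\cap\K(M^*)=E'$.

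Only then does the embedding lemma (cf.\ \cite[Proposition 6.7]{ptc}) give $a'\equiv_{\LK(B)}a$ with $a'\notin\K(M)$, so $\tp_{\LK}(a/A)$ is non-algebraic. That lemma must be used in a form whose hypothesis is relative algebraic closedness together with containing the constants, not $\acl$-closedness, which would be circular here.
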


\begin{coro}\label{parteimaginariaacl}
	Let $A\leq M$. Then, for all $i$, we have $\mathbf{T}^i_m(\acl(A))\subs \acl_{\L_i}(\mathcal{G}_i(A))$.
\end{coro}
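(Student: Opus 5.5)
Let $t\in\mathbf{T}^i_m(\acl(A))$. The plan is to show that $t$ lies in an $\L_i(\mathcal{G}_i(A))$-definable finite set. Since $t\in\acl(A)$, there is an $\L(A)$-formula $\phi(x)$, with $x$ a variable of sort $\mathbf{T}^i_m$, such that $X=\phi(M)$ is finite and contains $t$. The parameters of $\phi$ may lie in field sorts and in imaginary sorts of several valuations. The aim is to replace $\phi$ by an $\L_i$-formula using only parameters from $\mathcal{G}_i(A)$.

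The tool is \cref{gordalobadealtoimpacto}, applied to a suitable product of imaginary sorts. We take $T_i=\mathbf{T}^i_m$ and $T_j$ a trivial factor for $j\neq i$. If the corollary only allows nontrivial products, we instead regard $X$ as a subset of $\mathbf{T}^i_m\times\prod_{j\neq i}\mathbf{T}^j_1$ by taking the product with a fixed $\emptyset$-definable singleton, for instance the element of $\mathbf{T}^j_1$ coded by $\O_j$ with the zero coset. \cref{gordalobadealtoimpacto} then gives
\[
X=\bigcup_{k}\prod_j X_{j,k}(M),
\]
where each $X_{j,k}$ is a quantifier free $\L_j(\mathcal{G}_j(A))$-definable set. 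Projecting to the $i$-th coordinate, and discarding the indices $k$ for which some factor is empty, we get $X=\bigcup_k X_{i,k}(M)$. This is a finite union of $\L_i(\mathcal{G}_i(A))$-definable sets, so it is itself a finite $\L_i(\mathcal{G}_i(A))$-definable set containing $t$. Hence $t\in\acl_{\L_i}(\mathcal{G}_i(A))$.

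The main obstacle is the parameter bookkeeping in \cref{gordalobadealtoimpacto}. We must check that a general $A\leq M$, which may contain imaginary elements of other valuations, is allowed, and that the field part $\K(A)$ is counted inside each $\mathcal{G}_j(A)$. The latter holds because the field sort is shared by all the $\L_j$. If the corollary in fact needs $A$ to satisfy $\K(A)=\K(\acl(A))$, as \cref{ortoti} does, we would first replace $A$ by $A'=A\cup\K(\acl(A))$. This does not change $\acl(A)$. By the preceding proposition together with \cref{existclosedL*}, $\K(\acl(A))$ is contained in the field-theoretic algebraic closure of $\K(A)$, so these extra field elements are $\L_i$-algebraic over $\K(A)$. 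Therefore $\acl_{\L_i}(\mathcal{G}_i(A'))=\acl_{\L_i}(\mathcal{G}_i(A))$, and the conclusion for $A'$ gives the conclusion for $A$.
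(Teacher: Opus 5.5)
Your proof is correct and follows the paper's argument: you apply \cref{gordalobadealtoimpacto} to a finite $\L(A)$-definable set containing $t$ to see that it is $\L_i(\mathcal{G}_i(A))$-definable, and your padding-and-projection step just makes explicit what the paper leaves implicit. One small correction to your fallback step: for $A$ possibly containing imaginaries, the inclusion $\K(\acl(A))\subs\clalg{\K(A)}$ is \cref{partecorpiqueacl} (whose proof does not use the present corollary, so there is no circularity), not a consequence of the preceding proposition on $\acl_{\LK}$ together with \cref{existclosedL*}.
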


\begin{proof}
	Let $X\subs \T_m^i$ be a finite $\L(A)$-definable set. By \cref{gordalobadealtoimpacto}, $X$ is $\L_i(\mc{G}_i(A))$-definable. We can conclude that $X\subs \acl_{\L_i}(\mc{G}_i(A))$.
\end{proof}

\begin{prop}\label{partecorpiqueacl}
	Let $A\leq M$. Then, $\K(\acl(A))\subs\clalg{\K(A)}$.
\end{prop}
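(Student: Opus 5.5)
We must show: for $A\leq M$ (a substructure possibly with geometric elements), any field element algebraic over $A$ in the full language $\L$ lies in the field-theoretic algebraic closure of $\K(A)$. The plan is to reduce the imaginary parameters of $A$ to field parameters, and then apply \cite[Proposition 6.9]{ptc} ($\acl_{\LK}(B)\subs\clalg{B}$ for $B\leq\K(M)$).

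\textbf{Step 1: reduce to field parameters.} Let $c\in\K(\acl(A))$, witnessed by a finite $\L(A)$-definable set $X\subs\K$ containing $c$. The formula defining $X$ uses finitely many parameters from $\K(A)$ and, by \cref{remarkmetertodoentm}, finitely many elements $t_i\in\T^i_{m_i}(A)$, one for each valuation. Write $X=\phi(M,a,t_1,\dots,t_n)$ with $a\in\K(A)$. Set $B=\clalg{\K(A)}\cap\K(M)$; then $B$ is a subfield algebraic over $\K(A)$, so it suffices to prove $c\in\clalg{B}$. First we check that $B=\K(\acl(B))$ on the field side. By \cite[Proposition 6.9]{ptc} and \cref{existclosedL*}, $\acl_{\LK}(B)\subs\clalg{B}\cap \K(M)=B$. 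We should also check that adding the geometric sorts over $B$ creates no new field-algebraic elements. Since every element of the geometric sorts of $\dcl(B)$ is an image of field tuples under $\emptyset$-definable maps, this reduces to the $\LK$ case up to interdefinability.

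\textbf{Step 2: remove the dependence on the $t_i$.} Suppose for contradiction that $c\notin\clalg{B}$. Consider $t_i'\in\T^i_{m_i}(M)$ with $t_i'\equiv_{\L_i(Bc)}t_i$. Applying \cref{ortoti} over the base $\K(\acl(Bc))$ — or over $Bc$ together with its field-theoretic relative algebraic closure, which is what the hypothesis of \cref{ortoti} requires — we get $(t_1',\dots,t_n')\equiv_{\L(Bc)}(t_1,\dots,t_n)$. Hence $c\in\phi(M,a,t')$ for every such tuple $t'$. This requires knowing that the closure hypothesis of \cref{ortoti} is met by this base, which is exactly the point where circularity threatens. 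I would avoid it as follows. For each $i$, $\tp_{\L_i}(t_i/B)$ has many realisations, so the set $X$ can be moved. Take $\sigma_i$ an $\L_i(B)$-automorphism moving $c$ to infinitely many distinct $\L_i(B)$-conjugates $c'$. Such conjugates exist because $c\notin\acl_{\L_i}(B)$: in the single-valuation geometric structure, $\acl$ of a field is contained in its field-theoretic algebraic closure, by the $\LK$ result plus elimination of the $\T$-sorts via \cref{lemaintedef}-style coding. Then use \cref{ortoti} over $B$ (which satisfies its hypothesis by Step 1) to glue the $n$ automorphisms into one $\L(B)$-automorphism fixing the $t_i$ up to type, producing infinitely many points in the conjugates of $X$. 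This contradicts finiteness after a compactness argument.

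\textbf{Main obstacle.} The hard part is the gluing in Step 2. \cref{ortoti} handles only geometric tuples, not a field element $c$ together with the $t_i$. I would first try to realise $c$ through \cref{timaxtrdeg}-style generic field points $c_i,d_i$ coding $t_i$, chosen independent from $c$ over $B$. Then I would use \cite[Proposition 6.11]{ptc} to find a field tuple with prescribed $\LKi$-types over $Bc'$ for each conjugate $c'$, which yields $t_i$-conjugates $t_i''$ such that $c'\in\phi(M,a,t'')$ and $t''\equiv_{\L(B)}t$. Compactness then gives infinitely many elements in some conjugate of $X$, a contradiction.
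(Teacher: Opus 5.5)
\paragraph{There is a genuine gap: your Step 2 never produces the contradiction.}
\cref{ortoti} only compares tuples $(t_1,\dots,t_n)$ from the geometric sorts over a field base. It cannot be used to glue $\L_i(B)$-automorphisms that move the field element $c$, because $c$ lives in the sort shared by all the $\L_i$.

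Your fallback via \cite[Proposition 6.11]{ptc} and compactness does not close this. For each conjugate $c'\equiv_{\L(B)}c$, finding $t''\equiv_{\L(B)}t$ with $c'\in\phi(M,a,t'')$ is trivial: apply an automorphism over $B$ sending $c$ to $c'$. But that gives infinitely many \emph{different} conjugates of $X$, each containing one point, not infinitely many points in one of them. To contradict finiteness you need a single parameter whose fibre contains infinitely many conjugates of $c$, i.e.\ that $\tp(c/Bt)$ is non-algebraic. That is essentially the statement being proved, and nothing in your sketch supplies it.

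Incidentally, the circularity you worry about is not there. For a field-sort base $D$ containing the constants, $\K(\acl(D))=\acl_{\LK}(D)$, since the geometric sorts are $\emptyset$-interpretable in the field. So \cref{ortoti} does apply over $\acl_{\LK}(Bc)$. But it only tells you that $c\in\phi(M,a,t')$ for every $t'$ with the same componentwise types, which again is no contradiction.

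\paragraph{The missing step uses an idea you already have.}
Your last paragraph already contains the key move; you just route it through unnecessary machinery.
\begin{enumerate}
\item Apply \cref{timaxtrdeg} over $\K(A)c$ to get $d\in\bigcap_i s_i$ and $e\in\bigcap_i(u_i+\m_i\Lambda(s_i))$ with $\trdeg(de/\K(A)c)=m^2+m$.
\item Each $t_i$ is $\emptyset$-definable from $(d,e)$. Hence $c\in\acl(\K(A)de)$ with \emph{only field parameters}.
\item It follows that $c\in\acl_{\LK}(\K(A)de)\subs\clalg{\K(A)(de)}$ by \cite[Proposition 6.9]{ptc}.
\item If $c\notin\clalg{\K(A)}$, compute $\trdeg(cde/\K(A))$ in two ways. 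Going through $de$ first gives at most $m^2+m$, since $c$ is algebraic over $\K(A)(de)$. Going through $c$ first gives $1+m^2+m$. This is absurd.
\end{enumerate}
This is exactly the paper's proof. Step 1, the orthogonality theorem, Proposition 6.11 and compactness are all unnecessary. The generic coding tuple, chosen independent from $c$, turns the question directly into the $\LK$-statement over field parameters.
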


\begin{proof}
	Let $c\in \K(\acl(A))$. There exists $t_i\in \mathbf{T}^i_m$ for $m$ sufficiently large, such that $c\in \acl(\K(A)(t_1,\dots ,t_n))$. Write $t_i=(s_i, u_i+\m_i\Lambda(s_i))$. We apply \cref{timaxtrdeg} to find $d\in \bigcap_i s_i$ and $e\in\bigcap_i u_i+\m_i\Lambda(s_i)$ such that $\trdeg(de/\K(A)c)=m^2+m$. Clearly, $c\in\acl(\K(A)de))$. But then $c$ is a field element that is algebraic over field elements, so $c\in\acl_{\LK}(\K(A)de)\subs \clalg{\K(A)(de)}$. If we had that $c\in \clalg{\K(A)(de)}-\clalg{\K(A)}$, there would exist a polynomial $f(x)$ with at least one coefficient in $\K(A)(de)-\K(A)$, such that $f(c)=0$. This implies that $de$ is not transcendental over $\K(A)(c)$, a clear contradiction. We conclude that $c\in \clalg{\K(A)}$.
\end{proof}

\begin{coro}\label{funcimagenfinita}
	Let $R$ be a sort in $\mathcal{G}_i^{\im}$ and $R'$ a sort in $\mathcal{G}_j$ for $j\neq i$. Then any $\L(M)$-definable function $f:R\to R'$ has finite image.
\end{coro}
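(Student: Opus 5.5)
The plan is to apply \cref{gordalobadealtoimpacto} to the graph of $f$, and then argue that a definable function whose graph is a finite union of rectangles must have finite image. By \cref{remarkmetertodoentm} and \cref{remarknoocupoS}, I may assume that $R$ is some $\T^i_m$.

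If $R'$ is also an imaginary sort, say $R'=\T^j_{m'}$, the argument is direct. Let $A\leq M$ be a (small) substructure over which $f$ is defined. The graph $\Gamma_f\subs R\times R'$ is $\L(A)$-definable, so by \cref{gordalobadealtoimpacto} it can be written as
\[\Gamma_f=\bigcup_{k\le N} X_{i,k}\times X_{j,k},\]
with $X_{i,k}\subs R$ and $X_{j,k}\subs R'$ definable. Fix $k$ with $X_{i,k}\neq\emptyset$ and pick $x\in X_{i,k}$. For every $y\in X_{j,k}$ we have $(x,y)\in\Gamma_f$, so $y=f(x)$. Hence each nonempty rectangle has $|X_{j,k}|\le 1$, and therefore the image of $f$ has at most $N$ elements.

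If $R'$ is a product involving the field sort $\K$, the same reduction works once the field variables are absorbed. It suffices to treat $R'=\K$, since a function into a product has finite image as soon as each coordinate does. The plan is to compose $f$ with a definable map from $\K$ into $\mc{G}_j^{\im}$ that is finite-to-one, or injective on enough of the space. The natural candidate is $y\mapsto t^j_1(y)\in \T^j_1$, or $y\mapsto$ the class of the lattice generated by $y$ in $\S_1^j$, i.e.\ the ball $y\O_j$. That map, however, is not finite-to-one, so I would instead combine several such maps, for instance $y\mapsto (y\O_j,\ (y+\gamma)\O_j)$ and similar ones.

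An alternative, which is probably cleaner, is to run the proof of \cref{ortoti} and \cref{gordalobadealtoimpacto} directly with a field coordinate. For $x\in R$, the value $f(x)\in\K$ lies in $\dcl(Ax)\subs\acl(Ax)$. Using \cref{partecorpiqueacl} with the structure generated by $A$ and $x$, this value is algebraic over $\K(A)$. Here one has to be careful: \cref{partecorpiqueacl} says $\K(\acl(B))\subs \clalg{\K(B)}$, and the structure $B$ generated by $A\cup\{x\}$ has $\K(B)=\K(A)$. So $f(x)\in\clalg{\K(A)}\cap\K(M)$ for every $x$. Then by compactness (the image is definable and contained in a set algebraic over $\K(A)$, which is small compared with saturation) the image is finite, because an infinite definable subset of $\K(M)$ would have cardinality exceeding $|\clalg{\K(A)}|$. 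This handles any $R'$ containing field coordinates, and in fact gives a uniform argument.

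The main obstacle is making sure the cited results apply to $B=A\cup\{x\}$ with $x$ imaginary. \cref{partecorpiqueacl} is stated for arbitrary $A\leq M$, which I read as allowing imaginary elements, so that step is fine. For the imaginary target I use the rectangle argument above. Alternatively, \cref{parteimaginariaacl} gives $f(x)\in\acl_{\L_j}(\mc G_j(A))$, which does not involve $x$, and that also bounds the image by compactness.
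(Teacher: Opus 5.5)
Your proof is correct and follows the paper's approach. The imaginary target is handled by the rectangle decomposition of \cref{gordalobadealtoimpacto}, and you spell out the singleton argument the paper leaves implicit; the field target is handled by \cref{partecorpiqueacl}, applied to a structure whose field part is not enlarged by the imaginary $x$, followed by compactness. The only difference is cosmetic: you work over a small $A$ and use saturation of $M$ to bound the size of the image, whereas the paper works over $M$ and shows $f(R(N))\subs\K(M)$ in every elementary extension $N$. The abandoned idea of composing with a finite-to-one map $\K\to\mathcal{G}_j^{\im}$ can simply be deleted.
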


\begin{proof}
	If $R'=\T_m^i$, then this is a direct consequence of \cref{gordalobadealtoimpacto}. In the other case, we would have $f:R\to\K$. For any $a\in R(M)$, we have $f(a)\in\K(\dcl(Ma))\subs \K(M)$ by \cref{partecorpiqueacl}. Then, every elementary extension $N\geq M$ satisfies $f(R(N))\subs \K(M)$. If $f(R(M))$ was infinite, by compactness it would have a point in $N-M$, which is not possible. 
\end{proof}

\begin{coro}\label{coroaclLi}
	Let $A\leq M$. Then, $\acl(A)\subs \bigcup_i \acl_{\L_i}(\mathcal{G}_i(A))$. 
\end{coro}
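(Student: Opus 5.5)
The plan is to split an arbitrary element $e\in\acl(A)$ according to its sort. Every sort of $\L$ is either the shared field sort $\K$ or a sort in $\mc{G}_i^{\im}$ for exactly one $i$. By \cref{remarknoocupoS}, any sort of the form $\S_m^i$ embeds $\emptyset$-definably into $\T_m^i$. Hence it suffices to treat two kinds of elements: field elements, and elements of some $\T_m^i$. (Tuples are handled coordinatewise, and \cref{remarkmetertodoentm} lets us reduce a tuple of $\T^i$-elements to a single one if needed.)

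For $e\in\T_m^i(\acl(A))$, \cref{parteimaginariaacl} gives directly that $e\in\acl_{\L_i}(\mc{G}_i(A))$. For $e\in\K(\acl(A))$, \cref{partecorpiqueacl} gives $e\in\clalg{\K(A)}$. Since $e\in\K(M)$, we conclude that $e$ lies in the relative algebraic closure of $\K(A)$ inside $\K(M)$.

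It then remains to see that such an $e$ is $\L_1$-algebraic over $\K(A)\subs\mc{G}_1(A)$. Let $f$ be the minimal polynomial of $e$ over $\K(A)$. Then $f(x)=0$ is an $\Lring(\K(A))$-formula, hence an $\L_1(\mc{G}_1(A))$-formula, with finitely many solutions, one of which is $e$. So $e\in\acl_{\L_1}(\mc{G}_1(A))$, and indeed $e\in\acl_{\L_i}(\mc{G}_i(A))$ for every $i$, because the field sort is shared by all the $\L_i$. Combining the two cases gives $\acl(A)\subs\bigcup_i\acl_{\L_i}(\mc{G}_i(A))$.

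The only potential subtlety is whether \cref{parteimaginariaacl} really applies to an arbitrary substructure $A\leq M$ that has points in the imaginary sorts. Its proof uses \cref{gordalobadealtoimpacto}, which is stated for arbitrary $A\leq M$ and yields definability over $\mc{G}_i(A)$, so I expect this to go through with no extra work. If one insisted on the version of \cref{ortoti} over field-sort bases, one would first replace $A$ by $A\cup\K(\acl(A))$. The field case above shows that this adds only field-algebraic elements, which are already in every $\acl_{\L_i}(\mc{G}_i(A))$, so the statement is unaffected.
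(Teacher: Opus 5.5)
Your proof is correct and follows the paper, which obtains the corollary directly from \cref{parteimaginariaacl} (for the $\T^i_m$-sorts) and \cref{partecorpiqueacl} (for the field sort). You have made explicit two steps the paper leaves implicit: the reduction of $\S^i_m$ to $\T^i_m$ via \cref{remarknoocupoS}, and the minimal-polynomial argument showing that field-algebraic elements lie in every $\acl_{\L_i}(\mc{G}_i(A))$. Your closing fallback, first replacing $A$ by $A\cup\K(\acl(A))$, is the safer reading, since \cref{ortoti} (on which \cref{coroideftorsors} and \cref{parteimaginariaacl} rest) assumes a field-sort base with $A=\K(\acl(A))$, and it costs nothing for exactly the reason you give.
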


\begin{proof}
	This folows immediatly from \cref{parteimaginariaacl} and  \cref{partecorpiqueacl}	
\end{proof}

\section{Extending $\L$-types by invariant $\L_i$-types}\label{seccdensidtipos}

The goal of this section is to prove that a type over an $\acl\eq$-closed set is consistent with some union over $i$ of invariant $\L_i$-types. This will imply the first condition in \cref{criterioeliminacion}. 

\begin{df}
	For $A\subs \clalg{M}$, we denote by $B_i(A)$ the set of all $A$-definable $i$-balls in $\clalg{M}$. If $P\subs B_i(A)$, we denote by $\eta_P$ the generic type of $\bigcap_{b\in P}b$. That is,
	\[ \eta_P(x) := \set{ x\in b:b\in P }\cup \set{x\notin b': b'\in B_i(\clalg{M}), b'\subsn \bigcap_{b\in P}b}.  \]
\end{df}

\begin{remark}
	The type $\eta_P$ defined above generates a complete type if it is consistent. Indeed, if $b_0$ is any $i$-ball, either the formula $x\in b_0$ belongs to $\eta_P$, or the formula $x\notin b_0$ belongs to $\eta_P$. By $C$-minimality of $\ACVF$, the type generated by $\eta_P$ is complete. For simplicity, we will not distinguish between $\eta_P$ and its generated type. 
\end{remark}

\begin{remark}
	Let $P\subs B_i(A)$, and $\sigma\in\Aut(\clalg{M}/A)$. Since all $i$-balls in $P$ are $A$-definable, the action of $\sigma$ on the set of formulas $\set{x\in b:b\in P}$ is the identity. On the other hand, for any ball $b'$ such that $b'\subs b$ for all $b\in P$, we have that $\sigma(b')\subs \sigma(b)=b$ for all $b\in P$. Clearly, $\sigma(b')$ is a definable ball. Combining these two facts, we confirm that $\eta_P\in \I(\clalg{M}/A)$.  
\end{remark}

\begin{lemma}\label{lemabolaminimalcerrada}
	Let $b_1,\dots ,b_m$ be disjoint $i$-balls. Then, there is a minimal closed $i$-ball $\tilde{b}$ that covers all of the $b_j$.
\end{lemma}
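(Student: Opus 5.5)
We need to show that for disjoint $i$-balls $b_1,\dots,b_m$ in $\clalg{M}$ (with $m\geq 1$) there is a minimal closed $i$-ball containing all of them. The plan is to reduce to explicit radii and then use the ultrametric inequality. For each $j$ we pick a center $a_j\in b_j$. We set
\[ \gamma=\min\set{\v_i(a_j-a_k): j\neq k}, \]
which is a finite minimum of elements of the value group, since $a_j\neq a_k$ by disjointness. The candidate is $\tilde b=\set{x:\v_i(x-a_1)\geq\gamma}$.

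The steps, in order, are as follows.
\begin{enumerate}
\item We check that $\tilde b$ is independent of the choices. By the ultrametric inequality, every $a_j$ lies in $\tilde b$, so $\tilde b$ is the closed ball of radius $\gamma$ around any of the $a_j$.
\item We show $b_j\subs\tilde b$. The key point is that the radius of $b_j$ is at least $\gamma$. If $x\in b_j$ had $\v_i(x-a_j)<\gamma\leq\v_i(a_j-a_k)$ for some $k\neq j$ realizing the minimum, then, since balls are determined by radius and any point, $b_j$ would contain the closed ball around $a_j$ of radius $\v_i(x-a_j)$. That ball contains $a_k$, contradicting disjointness. Hence every $x\in b_j$ satisfies $\v_i(x-a_j)\geq\gamma$, and $b_j\subs\tilde b$. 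The same argument works whether $b_j$ is open or closed, and whether its radius is in the value group or a cut.
\item We show minimality. Let $b'$ be any closed ball containing all the $b_j$, with radius $\delta$. Since $b'$ contains $a_j$ and $a_k$ for a pair realizing $\gamma$, we get $\gamma=\v_i(a_j-a_k)\geq\delta$. As $b'$ and $\tilde b$ share the point $a_1$, it follows that $\tilde b\subs b'$.
\end{enumerate}

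The case $m=1$ needs separate treatment, since then $\gamma$ is undefined. If $b_1$ is closed, we take $\tilde b=b_1$. If $b_1$ is open of radius $\gamma_0$ in the value group, we take the closed ball of radius $\gamma_0$. A problem arises if radii are allowed to be cuts, or $b_1$ is the whole field: then no minimal closed ball may exist. I would handle this by adopting the paper's implicit convention that balls have radii in the value group, which is automatic in $\clalg{M}$ for definable balls. Alternatively, I would restrict to $m\geq 2$, which is presumably the case the paper uses.

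I expect step 2 to be the main obstacle: showing that the radius of each $b_j$ is at least $\gamma$. It needs care with open versus closed balls, but it reduces to the standard fact that two ultrametric balls are either disjoint or nested.
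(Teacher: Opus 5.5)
Your proof is correct and is essentially the paper's. It uses the same $\gamma=\min_{j\neq k}\v_i(a_j-a_k)$, the same candidate $\cl{B}(a,\gamma)$ with $a$ taken in one of the balls, and the same minimality check via a pair of balls at distance exactly $\gamma$. The one difference is in the covering step: the paper first shows that $\v_i(x-y)$ is constant for $x\in b$, $y\in b'$ when $b,b'$ are disjoint, and uses this for the balls not containing $a$, whereas you run the disjointness argument uniformly for every $b_j$. In that step any $k\neq j$ works, since $\gamma\leq\v_i(a_j-a_k)$ holds for all such $k$; it need not be a $k$ realizing the minimum.
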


\begin{proof}
	If there is only one ball, we take the corresponding closed ball. Now suppose we have at least two of them. First, note that if $b$ and $b'$ are disjoint balls, for any $x\in b$ and $y\in b'$ the value of $\v_i(x-y)$ does not depend on the choice of $x$ or $y$. Indeed, take another point $x'$ in $b$, and suppose, without loss of generality, that  $\v_i(x-y)<\v_i(x'-y)$. Then,
	\[ \v_i(x-x') = \v_i(x-y+y-x') = \v_i(x-y)<\v_i(x'-y).  \]
	Write $\rad(b)$ for the radius of the ball $b$. This implies that $\rad(b)\leq \v_i(x-x')<\v_i(x'-y)$, contradicting the fact that $b$ and $b'$ are disjoint.
	
	As a result of this, we can define a notion of distance between two disjoint balls as $d(b,b'):= \v_i(x-y)$ where $x\in b$ and $y\in b'$. Now, consider 
	\[\gamma := \min\set{d(b_j,b_{j'}):j\neq j'}.\]
	Let $\ell,k$ be such that $\gamma=d(b_{\ell},b_k)$ and take $a\in b_{\ell}$. We claim that the ball $\tilde{b}=\cl{B}(a,\gamma)$ has the desired properties. 
	
	First we show that $\tilde{b}$ covers all the $b_j$. If $j\neq \ell$, for any $x\in b_j$ we have that  $\v_i(x-a)=d(b_j,b_{\ell})\geq \gamma$, so $x\in \tilde{b}$.  On the other hand, if $x\in b_{\ell}$, take any point $c\in b_k$. Since $b_k\cap b_{\ell}=\emptyset$, we have $\v_i(x-a)\geq \v_i(c-a)=\gamma$, so $x\in\tilde{b}$. Now we check that $\tilde{b}$ is in fact minimal. If $\delta\in\Gamma_i$ is such that $\delta>\gamma$, and we take $x\in b_k$, then $\gamma=\v_i(x-a)<\delta$, so $x\notin \cl{B}(a,\delta)$. This proves that $\tilde{b}$ is the minimal closed ball that covers all of the balls $b_j$.
\end{proof}

\begin{prop}\label{densidadtiposinvariantesunavariable}
	Let $A=\acl\eq(A)\subs M\eq$ and $c\in \K(M)$. For each $i\in\set{1,\dots ,n}$, let $P_i:=\set{b\in B_i(A):c\in b}$. Then, the partial type 
	\[ \tp(c/A)\cup \bigcup_i \eta_{P_i} \]
	is consistent.   
\end{prop}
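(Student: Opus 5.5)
By compactness it suffices to show that $\tp(c/A)$, together with finitely many formulas from each $\eta_{P_i}$, is consistent. A finite fragment of $\eta_{P_i}$ consists of a single formula $x\in b_i$, where $b_i$ is the intersection of finitely many balls of $P_i$ (so $b_i\in P_i$), together with finitely many conditions $x\notin b'_{i,1},\dots,x\notin b'_{i,k}$. Here each $b'_{i,j}$ is a ball of $\clalg{M}$ strictly contained in $\bigcap_{b\in P_i}b$. So, given an $\L(A)$-formula $\phi(x)\in\tp(c/A)$, I want to find $c'\in\K(M)$ with $\models\phi(c')$, $c'\in\bigcap_i b_i$, and $c'\notin b'_{i,j}$ for all $i,j$.

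\textbf{Step 1: reduce to one valuation at a time.} I would first show that $\phi(M)\cap b_i$ is not covered by finitely many proper subballs of $\bigcap P_i$; the $\v_i$-genericity then has to be combined across valuations. For the combination I would use the density/approximation results: \cref{approximationprestelziegler}, and more precisely the local-global principle \cite[Proposition 6.11]{ptc} as in the proof of \cref{ortoti}. The point is that $\L_i$-conditions on a field element for different $i$ can be realized simultaneously inside $\tp_{\LK}$-types over algebraically closed parameters. Concretely, I would pass to the field part $A_0=\K(A)$, which is $\K(\acl(A_0))$-closed by \cref{partecorpiqueacl}. I would then use \cref{coroaclLi} and \cref{gordalobadealtoimpacto} to argue that the imaginary parameters of $A$ relevant to $c$ split valuation-wise, so that $\tp(c/A)$ is implied by $\tp_{\LK}(c/A_0)$ together with the $\L_i$-data of $c$ over $\mc{G}_i(A)$.

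\textbf{Step 2: the one-valuation claim (main obstacle).} Fix $i$ and suppose, towards a contradiction, that the $\L(A)$-definable set $D=\phi(M)\cap b_i$ is contained in $b'_1\cup\dots\cup b'_k$, each $b'_j\subsn\bigcap P_i$. I would pass to $\clalg{M}$, where by $C$-minimality of $\ACVF$ the $\v_i$-closure of $D$ is a finite Boolean combination of balls. The set of maximal balls of this combination contained in $b'_1\cup\dots\cup b'_k$ is a finite set. This finite set is invariant under $\Aut(M/A)$: the fact that automorphisms of $M$ extend to $\clalg{M}$ respecting the relevant structure is exactly where the density of $M$ in $\clalg{M}$ and \cref{lemaintedef}-type arguments enter. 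Hence this finite set is coded in $\acl\eq(A)=A$.

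\textbf{Step 3: contradiction via \cref{lemabolaminimalcerrada}.} I would apply \cref{lemabolaminimalcerrada} to the finitely many disjoint maximal such balls meeting $D\ni c$. The resulting minimal closed ball $\tilde b$ covering them is canonically determined by the finite set, hence $A$-definable, and it contains $c$, so $\tilde b\in P_i$. But $\tilde b\subs b'_1\cup\cdots$ is impossible: minimality forces $\tilde b\subs$ the smallest ball containing the $b'_j$, which is strictly inside $\bigcap P_i$ when the $b'_j$ are proper subballs, a contradiction. This yields genericity in each valuation separately.

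\textbf{Step 4: combine the valuations.} Finally, Step 1 amalgamates the $n$ conditions into a single realization. The delicate point throughout is Step 2: making sure that the finite configuration of balls is really $A$-definable in $M\eq$, and not merely in $\clalg{M}$.
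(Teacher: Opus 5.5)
There is a genuine gap, and it sits exactly where the interaction between the valuations has to be handled (your Steps~1 and~4). You assert that $\tp(c/A)$ is implied by $\tp_{\LK}(c/\K(A))$ together with the $\L_i$-types of $c$ over $\mc{G}_i(A)$, citing \cref{coroaclLi} and \cref{gordalobadealtoimpacto}. Those results only describe $\acl$ and definable subsets of the sorts $\mc{G}_i^{\im}$. They say nothing about types of field elements over arbitrary imaginary parameters $A\subs M\eq$. For $A=\acl\eq(A)$, your splitting would make every $A$-definable subset of $\K$ definable over $\mc{G}(A)$. That is already a special case of the weak elimination of imaginaries this proposition is meant to help prove, so it cannot be assumed.

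Without that splitting, the one-valuation statements do not combine. Knowing that $\phi(M)\cap b_i$ is not contained in $\bigcup_j b'_{i,j}$ for each $i$ separately does not imply that $\phi(M)\cap\bigcap_i b_i$ is not contained in $\bigcup_{i,j}b'_{i,j}$. Picture $\phi(M)$ as a piece lying in $b'_{1,1}$ together with a piece lying in $b'_{2,1}$. Nor does \cite[Proposition 6.11]{ptc} help: it only intersects $\v_i$-open conditions with an $\LK$-type over an $\acl$-closed field, not with a type over imaginary parameters.

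The missing idea is the paper's use of orthogonality of the geometric sorts.
\begin{enumerate}
\item Start from a single covering $X\cap\bigcap_i b_i\subs\bigcup_{i,j}b_i^j$, with $X\ni c$ an $\L(A)$-definable set, and remove the excluded balls one valuation at a time.
\item Fix the $b_i^j$ for $i<n$ and put the $n$-balls in canonical form. If $P_n$ has a minimal element that is a closed ball, use irredundant maximal open subballs of it, minimal in number. Otherwise use the smallest closed ball $b^*$ for which the covering still holds, obtained from \cref{lemabolaminimalcerrada} and a supremum of radii.
\item This canonical form is an $\L(A)$-definable function of the codes of the $b_i^j$, $i<n$, into sorts of $\mc{G}_n$. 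By \cref{funcimagenfinita} it has finite image, so the $n$-balls are algebraic over $A$, hence $A$-definable.
\item By minimality in $P_n$, $c$ avoids these $n$-balls, so they can be subtracted from $X$, and one recurses.
\end{enumerate}

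Your Step~2 places the difficulty in $A$-definability in $M\eq$ versus $\clalg{M}$. The real obstruction is that the excluded $i$-balls depend on the non-$A$-definable excluded balls of the other valuations, and orthogonality is what removes that dependence.

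Two smaller points. First, in Step~2, $D$ is not $\L_i$-definable, so $C$-minimality does not describe its $\v_i$-closure. Second, Step~3 fails when $P_i$ has a minimal closed ball: two distinct maximal open subballs of it already have that ball itself as their smallest closed cover, so you get no contradiction. This is why the paper treats that case separately.
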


\begin{proof}
	Suppose the type is not consistent. Then, there is an $\L(A)$-definable set $X$ with $c\in X$, balls $b_i\in P_i$ and $b_i^j\in B_i(\clalg{M})$ for $j=1,\dots , m_i$ with $b_i^j\subsn \bigcap_{b\in P_i}b$, such that
	\begin{equation}
		X\cap \bigcap_i b_i\subs \bigcup_{i,j}b_i^j.\label{arepaesta}
	\end{equation}
	
	We suppose, without loss of generality, that none of the balls $b_i^j$ is redundant. This means that for each ball $b_i^j$,
	\[ \left(  X\cap \bigcap_i b_i \right) - \bigcup_{k\neq i, \ell\neq j}b_k^{\ell}\neq \emptyset. \]
	As a remark, this implies that, for each $i$, the balls $b_i^j$ are disjoint. 
	
	Let us define a new set $X_{n-1}$ satisfying the conditions described above for $X$, but having no balls of the form $b_n^j$. If there is already no balls $b_n^j$ in the inclusion (\ref{arepaesta}), we take $X_{n-1}=X$. Otherwise, we consider the following two cases. For both of them, we fix the balls $b_i^j$ with $i<n$. 
	
	We suppose now that $P_n$ has a minimal ball that is closed. We can assume without loss of generality that $b_n$ is that ball.  Moreover, we can assume the balls $b_n^j$ to be open. To see this, define $\delta=\rad(b)$, and $a_j\in b_n^j$ any point. Then, we replace $b_n^j$ by the ball $B(a_j,\delta)$. In this process, it might be the case that for some $j$ and $j'$, the balls $B(a_j,\delta)$ and $B(a_{j'},\delta)$ are not disjoint. Then, they have to be equal, so it suffices to remove one of them from the inclusion. 
	
	Note that, in light of the construction above, we can assume that the balls $b_n^j$ are maximal for inclusion. As a final assumption, we will suppose that $m_n$ is the minimal amount of balls $b_n^j$ with the given characteristics: open, not redundant, pairwise disjoint, maximal for inclusion and satisfying (\ref{arepaesta})).
	
	Note that the set $\set{b_n^1,\dots b_n^{m_n}}$ is the only set of $n$-balls satisfying all of these assumptions and having size $m_n$. By \cref{codingfinitesets}, this says that we can construct an $\L(A)$-definable function $f$ from a product of sorts in $\mathcal{G}_i^{\im}$ where $i<n$, to a product of sorts in $\mathcal{G}_n$, such that
	\[f(\ulcorner b_i^j\urcorner:i<n, j\leq m_i)= \left\ulcorner\set{ b_n^j:j\leq m_n}\right\urcorner.\]
	By \cref{funcimagenfinita}, $f$ has finite image, so the codes of the balls $b_n^j$ are algebraic over $A$. Since $A=\acl(A)$, we get that the $b_n^j$ are $A$-definable. Note that this implies $c\notin \bigcup_j b_n^j$.  Otherwise, some $b_n^j$ would be in $P_n$, contradicting the minimality of $b_n$. In this case, we define
	\[ X_{n-1} := X - \bigcup_{j}b_n^j. \]
	
	The last case to consider is when $P_n$ does not have a minimal ball that is closed. We define $\tilde{b}_n$ as the  minimal closed ball that covers the balls $b_n^j$, given by \cref{lemabolaminimalcerrada}. Then, we have 
	\[ X\cap \bigcap_i b_i \subs\left( \bigcup_{i< n; j}b_i^j \right)\cup \tilde{b}_n. \]
	Note that $\tilde{b}_n\subsn \bigcap_{b\in P_n}b$. If the set
	\[ \left( X\cap\bigcap_i b_i \right) -  \bigcup_{i<n;j}b_i^j \]
	is just one point, let $b^*$ be that point. Otherwise, we define $b^*$ as follows. Consider the set $Z$ of closed $n$-balls $b'$ such that $b'\subs \bigcap_{b\in P_n}b$ and 
	\[  X\cap \bigcap_i b_i \subs\left( \bigcup_{i< n; j}b_i^j \right)\cup b'.\]
	Note that $Z\neq\emptyset$ since $\tilde{b}_n$ belongs to it.
	
	The set $\rad(Z)=\set{\rad(b):b\in Z}$ is bounded above by any value of the form $\v_n(x-y)$ with $x,y\in \left( X\cap\bigcap_i b_i \right) -  \bigcup_{i<n;j}b_i^j$. By o-minimality of DOAG, $\rad(Z)$ has a supremum $\gamma\in \Gamma_n$. Consider any point $a\in \left( X\cap\bigcap_i b_i \right) -  \bigcup_{i<n;j}b_i^j$ and take $b^*:=\cl{B}_n(a,\gamma)$. By definition of $\gamma$, the ball $b^*$ is the minimal ball of $Z$.
	
	As in the previous case, this gives us a way of constructing an $\L(A)$-definable function $f$ from a product of sorts in $\mathcal{G}_i^{\im}$ where $i<n$, to a product of sorts in $\mathcal{G}_n$, such that $f( b_i^j:i<n, j\leq m_i)= b^*$. By \cref{funcimagenfinita}, $f$ has finite image. So, $b^*$ is $A$-definable and then $c\notin b^*$: otherwise $b^*$ would be the minimal closed ball of $P_n$, a contradiction. In this case, we define
	\[ X_{n-1}:= X- b^*. \]
	
	Note that in all cases, $X_{n-1}$ is $\L(A)$-definable and $c\in X_{n-1}$. Moreover, 
	\[ X_{n-1}\cap  \bigcap_i b_i \subs \bigcup_{i\leq n-1; j}b_i^j, \]
	so we have the same conditions that we started with, only now the union on the right-hand side of the inclusion has no $n$-balls. By repeating this construction, we get to construct a set $X_0$ containing $c$, such that $X_0\cap\bigcap_i b_i\subs\emptyset$, which is a clear contradiction. 
\end{proof}

\begin{lemma}\label{pelosuelto}
	Let $A\subs M^{\eq}$ containing $\mc{G}(\acl^{\eq}(A))$, and $a\in\K(N)$ where $N\succeq M$. Suppose that $\tp_{\L_i}(a/\clalg{M})\in\I(\clalg{M}/\mc{G}_i(A))$. Then, $\tp_{\L_i}(\mc{G}_i(\acl^{\eq}(Aa))/\clalg{M})\in \I(\clalg{M}/\mc{G}_i(A))$
\end{lemma}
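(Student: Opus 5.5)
Since $\mc{G}_i(\acl\eq(Aa))$ consists of elements of $\clalg{M}$'s extension (we work inside $\clalg{N}$), invariance of its $\L_i$-type over $\clalg{M}$ reduces to showing that every element $e\in\mc{G}_i(\acl\eq(Aa))$ lies in $\dcl_{\L_i}^{\clalg{N}}(\mc{G}_i(A)\,a\,d)$ for some finite tuple $d$ whose own $\L_i$-type over $\clalg{M}$ is $\mc{G}_i(A)$-invariant. More precisely, I will show that $\mc{G}_i(\acl\eq(Aa))\subs \acl_{\L_i}^{\clalg{N}}(\mc{G}_i(A)a)$. This is enough. In $\ACVF$ (quantifier elimination in the geometric language), if $\tp(a/\clalg{M})$ is $\mc{G}_i(A)$-invariant, then so is the type of any tuple $e$ that is $\L_i$-algebraic over $\mc{G}_i(A)a$, provided $\mc{G}_i(A)$ is $\acl_{\L_i}$-closed in $\clalg{M}$. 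The reason is that an $\Aut(\clalg{M}/\mc{G}_i(A))$-automorphism $\sigma$ extends to fix $\tp(a/\clalg{M})$, and its action on the finitely many conjugates of $e$ over $\clalg{M}a$ is controlled by the invariance of $\tp(ea/\clalg{M})$ restricted to $\acl$. I would try first to make this precise via the standard fact that for invariant types, $\tp(ae/\clalg{M})$ invariant over $\acl(\mc{G}_i(A))$ follows when $e\in\acl(\mc{G}_i(A)a)$ and the base is algebraically closed. The hypothesis $A\supseteq\mc{G}(\acl\eq(A))$ together with \cref{coroaclLi} gives that $\mc{G}_i(A)$ is $\L_i$-algebraically closed relative to $M$. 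I also need it relative to $\clalg{M}$, which uses \cref{lemaintedef}-style interdefinability of $\clalg{M}$-elements algebraic over $M$.

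The main step is therefore to describe $\mc{G}_i(\acl\eq(Aa))$. I would apply \cref{coroaclLi} with base $\mc{G}(A)\cup\{a\}$. This requires passing from $\acl\eq$ to $\acl$ on real sorts, which is harmless because we only look at the $\mc{G}_i$-part and $A$ contains the real part of its $\acl\eq$. The result is
\[ \mc{G}_i(\acl\eq(Aa)) \subs \acl_{\L_i}(\mc{G}_i(A)a). \]
The field part of this set is handled by \cref{partecorpiqueacl}: such elements lie in $\clalg{\K(A)(a)}$, hence are $\L_i$-algebraic over $\mc{G}_i(A)a$. For the imaginary part, \cref{parteimaginariaacl} applies: \cref{gordalobadealtoimpacto} shows that any finite $\L(Aa)$-definable set of $\T^i_m$-elements is $\L_i(\mc{G}_i(Aa))$-definable.

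Finally, I would combine the two steps. Let $\sigma\in\Aut(\clalg{M}/\mc{G}_i(A))$. Because $\tp_{\L_i}(a/\clalg{M})$ is invariant, $\sigma$ extends to $\tilde\sigma$ on $\clalg{N}$ fixing $a$. Then $\tilde\sigma$ permutes the finite set of $\clalg{M}a$-conjugates of each $e$, and I need it to fix $\tp_{\L_i}(e/\clalg{M})$. Here I would use that each such $e$ is given by an $\L_i(\mc{G}_i(A))$-formula $\psi(a,y)$ with finitely many solutions. Its solution set is therefore fixed by $\tilde\sigma$ setwise, but possibly not pointwise, and this is the main obstacle.

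To get around it, I would enlarge $e$ to the full tuple of solutions, which is coded in $\ACVF$ by elimination of imaginaries, and show that this code is $\L_i$-definable over $\mc{G}_i(A)a$. Then $\tp(\text{code}\,/\clalg{M})$ is invariant. Individual elements are handled by the usual argument that any extension of an invariant type to $\acl$ of the base is again invariant, since the base $\mc{G}_i(A)$ is algebraically closed.
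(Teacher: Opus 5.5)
The proposal has genuine gaps, and they fall exactly at the steps that carry the content.

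\textbf{The reduction to $\acl_{\L_i}(\mc{G}_i(A)a)$ and the closure of the base.} First, the inclusion $\mc{G}_i(\acl\eq(Aa))\subseteq\acl_{\L_i}(\mc{G}_i(A)a)$ is not justified. Here $A$ is an arbitrary subset of $M\eq$; in \cref{teodensidadtipos} it is any $\acl\eq$-closed set. But \cref{coroaclLi} only describes the algebraic closure of sets of geometric elements. The hypothesis $\mc{G}(\acl\eq(A))\subseteq A$ concerns imaginaries algebraic over $A$ alone. It does not tell you that a geometric element algebraic over $\alpha a$, for a non-geometric $\alpha\in A$, is algebraic over $\mc{G}(A)a$. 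That is a statement of the same nature as the weak elimination being proved, and nothing established so far gives it. Applying \cref{coroaclLi} to a representative of $\alpha$ only yields algebraicity over parameters from $\mc{G}_i(M)$, and then invariance over $\mc{G}_i(A)$ is no longer formal. Also, for geometric elements, \cref{coroaclLi} gives algebraicity in the $\L_i$-reduct of $N$, whereas you need it in the $\ACVF$-structure $\clalg{N}$.

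Second, $\mc{G}_i(A)$ is not $\acl_{\L_i}$-closed in $\clalg{M}$. Its field part $\K(A)$ is only relatively algebraically closed in $\K(M)$. So if $d\in\K(A)$ is not a square in $\K(A)$, then $\sqrt{d}\in\acl^{\clalg M}_{\L_i}(\mc{G}_i(A))\setminus\mc{G}_i(A)$. What you would actually need is that every $\sigma\in\Aut_{\L_i}(\clalg M/\mc{G}_i(A))$ fixes $\acl^{\clalg M}_{\L_i}(\mc{G}_i(A))$ pointwise. Something like this is plausible here. For instance, for a unit $d$ and residue characteristic $\neq 2$, density makes $\operatorname{res}_i(\sqrt{d})$ an element of $\mc{G}_i(\acl\eq(A))\subseteq A$, which forbids swapping $\pm\sqrt{d}$. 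But this is a real argument, using the hypothesis on $A$ and \cref{lemaintedef}, and you only announce that it is needed.

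\textbf{The individual-element step.} Most seriously, even over a base $C$ that genuinely is $\acl$-closed, you do not get past the obstacle you yourself isolate. The types $\tp_{\L_i}(ae'/\clalg M)$, for $e'$ among the finitely many solutions of $\psi(a,y)$, are permuted by $\Aut(\clalg M/C)$. Coding the solution set shows that the code has invariant type, but says nothing about individual solutions. Your fallback, ``the usual argument that any extension of an invariant type to $\acl$ of the base is again invariant'', is precisely what has to be proved; it is not a formal consequence of $C=\acl(C)$.

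To see what is missing, fix a formula $\phi$. The relation $m\sim m'\iff\models\forall y\,(\psi(a,y)\to(\phi(a,y,m)\leftrightarrow\phi(a,y,m')))$ is a $C$-invariant equivalence relation on $\clalg M$ with finitely many classes, and $\set{m:\models\phi(a,e,m)}$ is a union of classes. If $\tp_{\L_i}(a/\clalg M)$ were $C$-definable, $\sim$ would be $C$-definable. Its classes would then be coded in $\acl(C)=C$ by elimination of imaginaries in $\ACVF$, and you would be done. Here the types are only invariant, so $\sim$ is merely $C$-invariant, and nothing you have said forces $\Aut(\clalg M/C)$ to fix its classes.

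The paper does not argue this by hand. It obtains the lemma from \cite[Corollary 1.10]{ppc} applied to $T_1=\ACVF_i^{\mc{G}}$, with \cref{coroaclLi} and \cref{lemaintedef} supplying the hypotheses. That result is also what handles the imaginaries in $A$. Your proposal replaces it by unproved assertions, one of which (the $\acl$-closedness of $\mc{G}_i(A)$ in $\clalg M$) is false as stated.
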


\begin{proof}
	By \cref{coroaclLi} and \cref{lemaintedef}, we can apply \cite[Corollary 1.10]{ppc} to $T_1 = \ACVF_i^{\mc{G}}$.
\end{proof}

\begin{teo}\label{teodensidadtipos}
	Let $A=\acl^{\eq}(A)\subs M^{\eq}$ and $c\in \K^m(M)$. Then, there are types $p_i\in\mc{I}(\clalg{M}/\mc{G}_i(A))$ such that the partial type $\tp(c/A)\cup\bigcup_i p_i$ is consistent. 
\end{teo}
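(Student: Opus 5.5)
We argue by induction on $m$, the length of the tuple $c=(c_1,\dots,c_m)$. The case $m=1$ is \cref{densidadtiposinvariantesunavariable}. There we take $p_i=\eta_{P_i}$. This is a complete $\L_i$-type over $\clalg{M}$ in one variable, by $C$-minimality of $\ACVF$. It is invariant over the $i$-balls definable over $A$, hence in $\I(\clalg{M}/\mc{G}_i(A))$. The point to check is that an $i$-ball in $\clalg{M}$ which is $A$-definable is coded in $\mc{G}_i(A)$. To get this, we use $A=\acl^{\eq}(A)$ together with elimination of imaginaries in $\ACVF$ and \cref{lemaintedef} to pull codes from $\clalg{M}$ back into $M$.

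For the inductive step, write $c=(c',c_m)$ with $c'$ of length $m-1$. The plan is to realize the one-variable case first and then extend it by the induction hypothesis over a bigger base.

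\textbf{Step 1.} Apply \cref{densidadtiposinvariantesunavariable} to $c_m$ over $A$. This gives a realization $a$ of $\tp(c_m/A)\cup\bigcup_i \eta_{P_i}$ in an elementary extension $N\succeq M$. Replacing $c$ by an $\Aut(N/A)$-conjugate, we may assume $c_m=a$. So $\tp_{\L_i}(c_m/\clalg{M})=\eta_{P_i}$ is $\mc{G}_i(A)$-invariant for each $i$. We must be careful about where things live: realizations are taken in a monster model $\mathfrak{U}$ containing $M$, and "$c\in\K^m(M)$" is read up to $A$-conjugacy in $\mathfrak{U}$.

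\textbf{Step 2.} Let $A'=\acl^{\eq}(Ac_m)$. By \cref{pelosuelto}, for each $i$ the type $\tp_{\L_i}(\mc{G}_i(A')/\clalg{M})$ is $\mc{G}_i(A)$-invariant. Now apply the induction hypothesis to $c'$ over $A'$, working in a sufficiently saturated model containing $M$ and $c_m$, namely $N$ or a bigger one, and with $\clalg{N}$ in place of $\clalg{M}$. This yields types $q_i\in\I(\clalg{N}/\mc{G}_i(A'))$ consistent with $\tp(c'/A')$. Choose a realization $c''$ of $\tp(c'/A')\cup\bigcup_i q_i$. Then $(c'',c_m)\equiv_A c$.

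\textbf{Step 3.} Build the final types $p_i$ by gluing invariant types. The $\L_i$-type of $(c'',c_m)$ over $\clalg{M}$ is obtained from two pieces. The first is the $\mc{G}_i(A)$-invariant type of $\mc{G}_i(A')$ over $\clalg{M}$, which contains $c_m$. The second is the $\mc{G}_i(A')$-invariant type $q_i$ of $c''$ over $\clalg{N}\supseteq\clalg{M}\mc{G}_i(A')$. We then use the standard transitivity fact: if $\tp(b/M)$ is $E$-invariant and $\tp(d/Mb)$ extends to an $Eb$-invariant global type, then $\tp(bd/M)$ is $E$-invariant. This is the invariant-type product $q_i\otimes \tp(\mc{G}_i(A')/\clalg{M})$, restricted to the variables $(c'',c_m)$. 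Set $p_i=\tp_{\L_i}(c''c_m/\clalg{M})$. It is $\mc{G}_i(A)$-invariant, and the tuple $(c'',c_m)$ realizes $\tp(c/A)\cup\bigcup_i p_i$, which proves consistency.

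The main obstacle is Step 3, together with making the induction hypothesis meaningful over $A'$. That hypothesis is stated for $\acl^{\eq}$-closed subsets of $M^{\eq}$ of a saturated model, whereas $A'$ lives in $N$ and $q_i$ must be invariant over $\clalg{N}$, not merely over $\clalg{M}$. I would first try to apply the whole statement in a larger monster model $N$, with $M$ small in $N$. For the transitivity I would argue directly with automorphisms. Take $\sigma\in\Aut(\clalg{M}/\mc{G}_i(A))$. By invariance of $\tp(\mc{G}_i(A')/\clalg{M})$, move $\sigma$ to an automorphism of $\clalg{N}$ that fixes $\mc{G}_i(A')$. Then invariance of $q_i$ over $\mc{G}_i(A')$ gives that $\sigma$ preserves $p_i$. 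The delicate point is lifting $\sigma$ to such an automorphism, which needs homogeneity of $\clalg{N}$ as an $\ACVF$-model.
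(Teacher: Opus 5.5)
Your proposal is correct and is essentially the paper's proof. It uses induction on $m$, then \cref{pelosuelto} to show that the $\L_i$-type over $\clalg{M}$ of the $\mc{G}_i$-part of the enlarged $\acl\eq$-closed base is $\mc{G}_i(A)$-invariant, and then the same automorphism-based gluing; the paper handles the delicate point you flag by lifting $\sigma\cup\mathrm{id}$ to an automorphism of a strongly homogeneous $N^*\succeq\clalg{N}$ fixing $E_i$, and using a global $E_i$-invariant extension of the new type. The only difference is the order of the two steps: the paper first applies the induction hypothesis to the $m$-tuple over $A$, then \cref{densidadtiposinvariantesunavariable} to the new coordinate over $E=\acl\eq(Aa)$ in a larger model $N$, whereas you do the one-variable case first and then the induction hypothesis over $\acl\eq(Ac_m)$, which works equally well.
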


\begin{proof}
	The proof goes by induction on $m$, the case $m=1$ is \cref{densidadtiposinvariantesunavariable}. Now, consider $a_0\in \K^m(M)$ and $c\in\K(M)$. By the induction hypothesis, there are types $p_i(x)\in\mc{I}(\clalg{M}/\mc{G}_i(A))$ and $a$ in some elementary extension of $M$ such that 
	\[ a\models \tp(a_0/A)\cup\bigcup_i p_i(x). \]
	We define $E_i=\mc{G}_i(\acl^{\eq}(Aa))$. By \cref{pelosuelto}, we have:
	\[ p'_i= \tp_{\L_i}(E_i/\clalg{M})\in \mc{I}(\clalg{M}/\mc{G}_i(A))\]
	
	Let $E=A\cup\bigcup_i E_i$. Then, $E=\acl^{\eq}(E)$. Now, consider $N\succeq M$ containing $E$. We apply \cref{densidadtiposinvariantesunavariable} to $c$ with $E$ as the set of parameters. We get types $p''_i\in\mc{I}(\clalg{N}/E_i)$ such that $\tp(c/E)\cup\bigcup_i p''_i$ is consistent (note that $\mc{G}_i(E)=E_i$). Let $c^*$ realize this partial type.  
	We define $q_i(x,y):=\set{ \phi(x,y):\phi(x,a)\in p''_i }$. Note that $(c^*,a)\models \tp(ca/A)\cup\bigcup_i q_i$. Indeed, $(c^*,a)\models q_i$ for all $i$ by definition of $q_i$ and $p_i''$, and $(c^*,a)\models \tp(ca/A)$ since $c^*\models \tp(c/E)$ and $E$ contains $a$.
	
	It remains to show that  $q_i\in\I(\clalg{M}/\mc{G}_i(A))$. Consider $\sigma\in\Aut_{\L_i}(\clalg{M}/\mc{G}_i(A))$. Recall that the type $\tp_{\L_i}(E_i/\clalg{M})$ is $\mc{G}_i(A)$-invariant. Then the partial map $\tau_0$ given by
	\[\tau_0(x)=\begin{cases}
		\sigma(x) & \text{ if }x\in\clalg{M},\\
		x & \text{ if } x\in E_i.
	\end{cases}\]
	is elementary. We consider $N^*\succeq \clalg{N}$ sufficiently strongly homogeneous, $\tau\in\Aut_{\L_i}(N^*/E_i)$ extending $\tau_0$, and $p^*\in\I(N^*/E_i)$ extending $p_i''$. Let $\phi(x,y,z)$ be an $\L_i$-formula and $u$ a tuple in $\clalg{M}$ such that $\phi(x,y,u)\in q_i$. By definition of $q_i$, we have that $\phi(x,a,u)\in p_i''$, and then this formula belongs to $p^*$ too. By $E_i$-invariance of $p^*$, the formula $\phi(x,\tau(a),\tau(u))\in p^*$. Since $a\in E_i$ and $u\in \clalg{M}$, this says that $\phi(x,a,\sigma(u))\in p_i''$. That is, $\phi(x,y, \sigma(u))\in q_i$.
\end{proof}

\section{The 3-amalgamation theorem over geometric parameters}\label{secc3amalg}

	In this section we will prove the second condition in \cref{criterioeliminacion} for the theory $T$. We will adapt the proof of \cite[Theorem 2.54]{ppc}, relying on some results proven in \cite{ptc} about amalgamation of $\LK$-structures. 

	\begin{df}
	 Let $L_1,L_2$ be fields with a common subfield $L_0$.
	 \begin{itemize}
		 	\item We say that $L_1$ is linearly disjoint from $L_2$ over $L_0$ if any subset of $L_1$ that is linearly independent over $L_0$, stays linearly independent over $L_2$ (seen inside $L_1L_2$). In this case, we write $L_1\ind_{L_0}^{\ell} L_2 $
		 	\item We say that $L_1$ is algebraically disjoint from $L_2$ over $L_0$ if any subset of $L_1$ that is algebraically  independent over $L_0$, stays algebraically independent over $L_2$ (seen inside $L_1L_2$). In this case, we write $L_1\ind_{L_0}^{a} L_2 $
		 	\item The extension $L_0\subs L_1$ is said to be regular if $L_1\ind_{L_0}^{\ell}\clalg{L_0}$. 
	 \end{itemize}
	\end{df}
	
\begin{lemma}\label{ppc2.47}
	Let $A\leq M$, $N\succeq M$, and $C\subs \K(N)$ containing $\K(A)$. Suppose that $\clalg{\K(A)}\cap M\subs A$ and that, for some $i$, $\qftp_{\L_i}(C/M)$ is $\Aut_{\L}(M/A)$-invariant. Then, $C$ and $\K(M)$ are algebraically disjoint over $\K(A)$.
\end{lemma}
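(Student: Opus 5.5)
We argue by contradiction, following the proof of \cite[Lemma 2.47]{ppc}. Suppose $C$ and $\K(M)$ are not algebraically disjoint over $\K(A)$. Then there is a finite tuple $c\in C$, algebraically independent over $\K(A)$, which becomes algebraically dependent over $\K(M)$. Shrinking $c$, we may assume $c=(c',c_0)$ where $c'$ is algebraically independent over $\K(M)$ and $c_0$ is algebraic over $\K(M)(c')$ but transcendental over $\K(A)(c')$. The aim is to use the invariance hypothesis to show that the parameters witnessing this algebraicity are already algebraic over $\K(A)$. We then contradict $\clalg{\K(A)}\cap M\subs A$, or the transcendence of $c_0$ over $\K(A)(c')$.

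Concretely, let $f(x,y,m)$ be a nonzero polynomial with $f(c',c_0,m)=0$, where $m\in\K(M)$ is a tuple of coefficients. Normalize $f$ so that it is irreducible and monic in $y$ over $\K(M)(c')$. It is then uniquely determined, with some coefficient fixed to $1$ in a fixed ordering of monomials. Hence the tuple $m$ of coefficients is uniquely determined by $\qftp_{\LKi}(c/\K(M))$: it is the unique tuple of $\K(M)$ such that the quantifier-free formula $f(c',c_0,z)=0$, together with the normalization, holds. Since $\qftp_{\L_i}(C/M)$ is $\Aut_{\L}(M/A)$-invariant, any $\sigma\in\Aut_{\L}(M/A)$ satisfies $f(c',c_0,\sigma(m))=0$. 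By uniqueness of the minimal polynomial, $\sigma(m)=m$. Thus $m\in\dcl(A)$, and in particular $m\in\acl(A)$.

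Next we convert this into field-algebraicity. By \cref{partecorpiqueacl}, $\K(\acl(A))\subs\clalg{\K(A)}$, so $m\in\clalg{\K(A)}\cap M\subs A$. Then $f$ has coefficients in $\K(A)$, so $c_0$ is algebraic over $\K(A)(c')$, which is the desired contradiction.

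The main obstacle is making the uniqueness step honest. The minimal polynomial of $c_0$ over $\K(M)(c')$ has coefficients that are rational functions in $c'$ over $\K(M)$, not polynomials. We handle this by clearing denominators and normalizing a leading coefficient, so that the polynomial $f(x,y)\in\K(M)[x,y]$ is irreducible and unique up to a scalar. Then $f(c',c_0,z)=0$ determines the coefficient tuple $z$ up to the normalization. We also need that "$f(c',c_0,\sigma(m))=0$" follows from quantifier-free invariance. This holds because it is a field equation, hence an $\LKi$-quantifier-free formula over $M$ with parameters $m$, and invariance under $\Aut_{\L}(M/A)$ transports it to $\sigma(m)$. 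Finally, the irreducible polynomial vanishing at $(c',c_0)$ is unique up to scalar: since $c'$ is independent over $\K(M)$, the ideal of $(c',c_0)$ over $\K(M)$ is principal, generated by this polynomial.
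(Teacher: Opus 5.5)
Your proof is correct and follows the same idea as the paper: invariance of $\qftp_{\L_i}(C/M)$ forces the polynomial data over $\K(M)$ witnessing the algebraic relation to be fixed by $\Aut_{\L}(M/A)$, hence to lie in $\dcl(A)$, hence in $\clalg{\K(A)}\cap M\subs A$ by \cref{partecorpiqueacl}. The only difference is in how that data is coded. The paper takes the full algebraic locus of a tuple and codes it by elimination of imaginaries in $\ACF$. You instead shrink to a hypersurface, so the code is just the normalized coefficient tuple of the generator of a principal prime ideal; this avoids $\ACF$ imaginaries at the cost of the shrinking step.
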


\begin{proof}
	It is enough to show that for any finite tuple $u\in C$, its algebraic locus $V$ over $\K(M)$ is defined over $\K(A)$. Take an arbitrary $\sigma\in\Aut_{\L}(M/A)$. By invariance of $\qftp_{\L_i}(C/M)$, we have that $u\in\sigma(V)$, so $V\subs \sigma(V)$. We get the other inclusion using $\sigma\inv$ instead of $\sigma$. By elimination of imaginaries in $\ACF$, $V$ is defined over $\K(\dcl(A))$. Applying \cref{partecorpiqueacl}, we get $\K(\dcl(A))\subs \clalg{\K(A)}\cap M=\K(A)$. Then, $V$ is in fact defined over $\K(A)$. 
\end{proof}

\begin{lemma}\label{lemaisos1}
	Let $A,B \leq M$ be such that $\K(\acl(A))\subs A$ and $\K(\acl(B))\subs B$. Suppose that $f:A\to B$ is an $\L$-isomorphism. Then, $f$ is $\L$-elementary.
\end{lemma}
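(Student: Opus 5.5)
The plan is to reduce the statement to the field sort, apply an embedding/elementarity result for $\LK$-structures from \cite{ptc}, and then transport the geometric sorts through automorphisms using \cref{ortoti}.

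\textbf{Step 1: the field parts.} Let $f_0=f\rest{\K(A)}:\K(A)\to\K(B)$. By hypothesis $\K(A)$ is relatively algebraically closed in $\K(M)$: indeed $\clalg{\K(A)}\cap\K(M)\subs\K(\acl(A))\subs A$. Hence $\K(M)$ is a regular extension of $\K(A)$, and likewise of $\K(B)$.

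The substructures also contain the constants $c_j$, so $\K(A)$ and $\K(B)$ contain the field $F_0$. The bounded structure is therefore part of the data preserved by $f_0$.

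I will invoke the embedding lemma for bounded pseudo $T$-closed fields (\cite[Section 6]{ptc}, the analogue of \cite[Proposition 6.11]{ptc} and its corollaries). It says that an $\LK$-isomorphism between relatively algebraically closed substructures containing the constants is $\LK$-elementary in $\K(M)$. This needs that $f_0$ extends to an isomorphism of the algebraic closures compatible with every valuation $\v_i$ on $\clalg{\K(A)}$, which follows from $\clalg{\K(A)}=\clalg{F_0}\K(A)$ (the Remark after the Notation). Alternatively, one can argue directly with \cref{existclosedL*} and a back-and-forth between regular extensions, using saturation of $M$. This gives an automorphism $\sigma_0$ of the $\LK$-structure $\K(M)$ extending $f_0$.

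\textbf{Step 2: extension to $\L$.} Extend $\sigma_0$ canonically to an $\L$-automorphism $\sigma$ of $M$, since every geometric sort is $\emptyset$-interpretable from the field sort. Then $\sigma$ agrees with $f$ on $\K(A)$, but possibly not on the geometric part of $A$.

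Replace $f$ by $\sigma\inv\circ f$. Now $f$ is an $\L$-isomorphism from $A$ onto $B'=\sigma\inv(B)$ that fixes $\K(A)=\K(B')$ pointwise. It remains to show that such an $f$ is elementary over $\K(A)$.

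\textbf{Step 3: the geometric sorts.} It suffices to treat finite tuples of $A$. By \cref{remarkmetertodoentm}, any such tuple is interdefinable with one element $t_i\in\T^i_m$ for each $i$ (quantifier-free $\emptyset$-definable coding), so we only need to consider tuples $(t_1,\dots,t_n)$. By \cref{ortoti}, applied with $\K(\acl(A))=\K(A)$, it is enough to show that $t_i\equiv_{\L_i(\K(A))}f(t_i)$ for each $i$. Since $f$ is an $\L$-isomorphism fixing $\K(A)$, the element $f(t_i)$ has the same quantifier-free $\L_i$-type over $\K(A)$ as $t_i$. The geometric language was chosen so that $\ACVF^{\mathcal G}$ has quantifier elimination, so this is the same as equality of $\ACVF$-types in $\clalg{M}$. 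I would then transfer equality of $\L_i$-types from $\clalg{M}$ down to $M$, arguing as in \cref{ortoti}: choose realizations $c,d$ in $\K(M)$ via \cref{timaxtrdeg} and use that the field type is determined by Step 1 over $\K(A)$.

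\textbf{Main obstacle.} The hard part is Step 3's passage from the quantifier-free $\L_i$-type to the full $\L_i$-type in $M$, where $M$ is PAC rather than algebraically closed. I would attempt it by realizing $t_i$ and $f(t_i)$ through generic field tuples whose quantifier-free $\LKi$-types over $\K(A)$ coincide. I would make them algebraically disjoint via \cref{ppc2.47}, so that the generated fields are regular over $\K(A)$. Then the elementarity of Step 1, applied to the enlarged relatively algebraically closed fields, yields the $\L_i$-automorphism.
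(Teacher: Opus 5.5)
Your Steps 1--3 follow the paper's proof. \cite[Proposition 6.7]{ptc} makes $f\rest{\K(A)}$ elementary, it extends to some $\sigma\in\Aut_{\L}(M)$, and \cref{ortoti} reduces the geometric part to one claim for each $i$. The claim is that equality of quantifier-free $\L_i(\K(A))$-types of $t_i$ and its image, which is all the isomorphism gives, upgrades to equality of full $\L_i(\K(A))$-types in $M$.

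That upgrade is the gap, and you leave it open. It is not automatic, because quantifier elimination holds in $\clalg{M}$, not in the PAC field $M$. The paper itself handles it in one sentence (``Then, $f(t_i)\equiv_{\L_i(\K(B))}\sigma(t_i)$''), so you have located the crux correctly. However, the plan you sketch would not close it as stated:
\begin{itemize}
\item Generic tuples $cd$ and $c'd'$ realizing $t$ and $t'$ with equal quantifier-free $\LKi$-types over $K=\K(A)$ only give an isomorphism $K(cd)\to K(c'd')$. The embedding lemma needs an $\LKi$-isomorphism $\clalg{K(cd)}\cap\K(M)\to\clalg{K(c'd')}\cap\K(M)$, and nothing in your plan produces a $c'd'$ admitting one.
\item \cref{ppc2.47} is beside the point. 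It needs an $\Aut(M/A)$-invariant quantifier-free type, and regularity over $K$ of subfields of $\K(M)$ is automatic anyway.
\item The extra condition in Step 1 (extending $f\rest{\K(A)}$ compatibly with all $\v_i$) is not needed. It also does not follow from $\clalg{\K(A)}=\clalg{F_0}\K(A)$, which says nothing about the valuations on the compositum.
\end{itemize}

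Here is one way to fill the gap. Take $t=(s,u+\m_i\Lambda(s))$ and $t'=(s',u'+\m_i\Lambda(s'))$ in $\T^i_m(M)$ with the same quantifier-free $\L_i(K)$-type.
\begin{enumerate}
\item As in \cref{timaxtrdeg}, choose $cd\in\K(M)$ realizing $t$ with $\trdeg(cd/K)=m^2+m$, and put $L=\clalg{K(cd)}\cap\K(M)$.
\item By quantifier elimination in $\ACVF^{\mathcal G}$, a saturated algebraically closed valued field containing $\clalg{M}$ has an automorphism $\tau$ over $K$ with $\tau(t)=t'$.
\item Choose $\tau$ so that $\tau(cd)$ is algebraically independent over $\K(M)$. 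This is possible by compactness and a dimension count: every $Kt$-definable set containing $cd$ has dimension $m^2+m$, since the field part of $\acl(Kt)$ in $\ACVF$ is $\clalg{K}$.
\item Then $\tau(L)$ is regular over $K$ and free from $\K(M)$, so $\K(M)\tau(L)$ is a regular extension of $\K(M)$.
\item By \cref{existclosedL*} for $\v_i$ alone, together with saturation, there is an $\LKi$-embedding $\iota$ of $\tau(L)$ into $\K(M)$ over $K$ and representatives of $t'$. Hence $\iota\tau(c)\in s'$ and $\iota\tau(d)\in u'+\m_i\Lambda(s')$.
\item The image $\iota\tau(L)$ is $K$-isomorphic to $L$, so it is a model of $T_{\mathrm{bd}}$. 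A degree count with the polynomials $r_k$ then shows it is relatively algebraically closed in $\K(M)$.
\item The one-valuation case of \cite[Proposition 6.7]{ptc} makes $\iota\tau\rest{L}$ $\LKi$-elementary. Thus $cd\equiv_{\LKi(K)}\iota\tau(cd)$, and so $t\equiv_{\L_i(K)}t'$.
\end{enumerate}
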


\begin{proof}
	Note that $f$ is, in particular, an $\LK$-isomorphism. By \cite[Proposition 6.7]{ptc}, $f\rest{\K(A)}$ is $\LK$-elementary. Then, $f\rest{\K(A)}$ extends to some $\sigma_0\in\Aut_{\LK}(\K(M))$. This induces an automorphism on the geometric structure, so we find $\sigma\in\Aut_{\L}(M)$ extending $f\rest{\K(A)}$.
	
	Let $a=(a_0,t_1,\dots ,t_n)$ be a tuple of elements in $A$, where $a_0$ is some tuple in $\K(A)$ and $t_i\in\T^i(A)$. We want to show that $(a_0,t_1,\dots ,t_n)\equiv_{\L}(f(a_0),f(t_1),\dots ,f(t_n))$. Note that $f(a_0)=\sigma(a_0)$. So, it suffices to show that $(f(t_1),\dots ,f(t_n))\equiv_{\L(\K(B))}(\sigma(t_1),\dots ,\sigma(t_n))$.
	
	For any tuple $b\in\K(B)$, we have $f\inv(b)=\sigma\inv(b)$. Seeing $f$ and $\sigma$ as $\L_i$-isomorphisms, we get that
	\[  (b,f(t_i))\equiv_{\L_i}^{\qf} (f\inv(b),t_i) = (\sigma\inv(b),t_i)\equiv_{\L_i}^{\qf} (b,\sigma(t_i)).  \]
	Then, $f(t_i)\equiv_{\L_i(\K(B))}\sigma(t_i)$. By \cref{ortoti}, we conclude that 
	\[(f(t_1),\dots ,f(t_n))\equiv_{\L(\K(B))}(\sigma(t_1),\dots ,\sigma(t_n)).\qedhere\]
\end{proof}

\begin{coro}\label{qftpimpliesfulltype}
	Let $E\subs M$ containing $\K(\acl(E))$ and $a\in M$ a tuple. Then
	\[ \qftp_{\L}(a\K(\acl(Ea))/E)\vdash \tp_{\L}(a/E) \]
\end{coro}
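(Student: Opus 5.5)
The corollary will follow from \cref{lemaisos1} by building a partial isomorphism between two substructures whose field parts are closed under $\acl$. Let $b$ realize $\qftp_{\L}(a\K(\acl(Ea))/E)$. Concretely, $b$ is a tuple $a'$ together with an enumeration $D'$ matched to an enumeration $D$ of $\K(\acl(Ea))$, such that $aD\equiv^{\qf}_{\L(E)} a'D'$. We want $a\equiv_{\L(E)}a'$. Set $A$ to be the $\L$-substructure generated by $E\cup\{a\}\cup D$, and $B$ the substructure generated by $E\cup\{a'\}\cup D'$. Equality of quantifier free types over $E$ yields an $\L$-isomorphism $f:A\to B$ that fixes $E$, sends $a$ to $a'$, and sends $D$ to $D'$.

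To apply \cref{lemaisos1} we must check $\K(\acl(A))\subs A$ and $\K(\acl(B))\subs B$. For $A$: the structure $A$ is $\dcl$-generated by $Ea$ together with elements of $\acl(Ea)$, so $\acl(A)=\acl(Ea)$. Hence $\K(\acl(A))=\K(\acl(Ea))=D\subs A$. (The field sort of the generated substructure may contain more than $D$, for instance images of function symbols applied to $E$ or $a$, but it stays inside $\K(\acl(Ea))$, so $\K(A)=D$.)

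The main obstacle is the corresponding property for $B$. We only know that $D'$ has the same quantifier free type as $D$; it is not yet known to be $\K(\acl(Ea'))$. To handle this I would use \cref{partecorpiqueacl}, which gives $\K(\acl(B))\subs\clalg{\K(B)}=\clalg{D'}$. It therefore suffices to show that $D'$ is relatively algebraically closed in $\K(M)$. The field $D$ is relatively algebraically closed in $\K(M)$: any element of $\K(M)$ algebraic over $D$ is a root of a polynomial over $D$, hence lies in $\acl(Ea)$, hence lies in $D$. This must transfer to $D'$ via the quantifier free type in the constants $c_j$. I would argue as follows. Since $E$ contains the constants, $D\supseteq F_0$ satisfies $T_{\mathrm{bd}}$-type properties. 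The condition ``the polynomial $P$ over $D$ has a root in $D$'' is quantifier free in the enumeration, but ``has no root in $\K(M)$'' is not. Here I would use boundedness. A separable polynomial of degree $m$ over $D$ with a root in $\K(M)$ but none in $D$ would give a finite extension of $D$ inside $\K(M)$. By the axioms of $T_{\mathrm{bd}}$ and the remark following them (with \cite[Lemma 6.4]{ptc}), every finite extension of $D$ is generated by roots of the $r_m$, which lie in $\clalg{F_0}$. The question of which $r_m$-roots lie in $D$ is then determined by the field type of the constants, which is fixed because $E\ni c_j$. The same holds for $D'$. Perfection removes inseparability, so $\clalg{D'}\cap\K(M)=D'$. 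If this fails, the fallback is to invoke \cite[Proposition 6.7]{ptc} directly, which presumably gives elementarity of $\LK$-isomorphisms between such fields.

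Having verified both hypotheses, \cref{lemaisos1} shows that $f$ is $\L$-elementary. Therefore $a\equiv_{\L(E)}a'$, which is the required implication.
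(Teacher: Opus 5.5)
Your argument is correct in outline, but it proves a stronger statement than the paper does. The paper reads the hypothesis as saying that the realization already has the form $b\K(\acl(Eb))$. Both generated substructures then satisfy the closedness hypothesis of \cref{lemaisos1} by construction, and the proof is one line. The cost is moved to the application: in \cref{3amalgamacion} the paper checks by hand, via regularity of $\K(E)\subs L$, that $D_j=\K(\acl(CA_j))$ before invoking the corollary. You allow an arbitrary realization $a'D'$, so you need the extra fact that $D'$ is relatively algebraically closed in $\K(M)$. That fact genuinely uses boundedness, and in exchange your version of the corollary applies to arbitrary realizations.

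The extra step is true, but your sketch of it needs repair. It is not the case that every finite extension of $D$ is generated by roots of the $r_m$: an extension of degree at most $m$ only embeds into $D(\rho)$ for a root $\rho$ of $r_m$. Also, which roots of $r_m$ lie in $D$ is not the invariant that matters. A clean argument runs as follows.
\begin{itemize}
\item By the remark at the end of \cref{seccdeflenguaje}, $\clalg{D}=\clalg{F_0}D$, where $F_0$ is the field generated by the constants. Since $f\rest{D}$ fixes $F_0$, this transfers to $\clalg{D'}=\clalg{F_0}D'$.
\item Next, $\clalg{F_0}\cap\K(M)\subs\K(\acl(E))\subs E$ is fixed by $f$, so it lies in $D'$. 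This is where the hypothesis on $E$ is used.
\item Now let $\alpha\in\K(M)$ be algebraic over $D'$. Since $D'\cong D$ is perfect, $\alpha\in D'N$ for some finite Galois extension $N$ of $F_0$.
\item The natural-irrationalities correspondence for the Galois extension $D'N/D'$ gives $D'(\alpha)=D'\bigl(D'(\alpha)\cap N\bigr)$. Moreover $D'(\alpha)\cap N\subs\clalg{F_0}\cap\K(M)\subs D'$, hence $\alpha\in D'$.
\end{itemize}

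Your fallback to \cite[Proposition 6.7]{ptc} would not bypass this step. \cref{lemaisos1} invokes that proposition precisely under the closedness hypothesis you are trying to establish.
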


\begin{proof}
	If $b\K(\acl(Eb))\models\qftp_{\L}(a\K(\acl(Ea))/E) $, we have an $\L(E)$-isomorphism $f$ sending $a$ to $b$, and $\K(\acl(Ea))$ to $\K(\acl(Eb))$. By \cref{lemaisos1}, $f$ is elementary.  
\end{proof}

\begin{remark}\label{rmkaclclosedregular}
	Let $A\subs M$ such that $\K(\acl(A))=\K(A)$. This implies that $\K(A)$ is a perfect field, so the extension $\K(A)\subs\clalg{\K(A)}$ is separable. Clearly, any element of $ \clalg{\K(A)}\cap\K(M)$ is in $\K(\acl(A))=\K(A)$. Then, $\clalg{\K(A)}\cap\K(M)=\K(A)$. We conclude that the extension $\K(A)\subs \K(M)$ is regular. 
\end{remark}

\begin{teo}\label{3amalgamacion}
	Let $E\subs M$, and $a_1,a_2,c_1,c_2\in\K(M)$ be tuples such that:
	\begin{itemize}
		\item $\K(\acl(E))=\clalg{\K(E)}\cap M\subs E$,
		\item $a_j$ enumerates $A_j=\K(\acl(\K(E)A_j))=\clalg{\K(E)a_j}\cap M$,
		\item $c_j$ enumerates $C_j=\K(\acl(\K(E)C_j))=\clalg{\K(E)c_j}\cap M$,
		\item $A_1\cap A_2=\K(E)$,
		\item $c_1\equiv_{\L(E)}c_2$.	
	\end{itemize}
	Suppose there is a tuple $c$ in some elementary extension of $M$ such that:
	\begin{itemize}
		\item $c$ enumerates $C=\K(\acl(\K(E)C))=\clalg{\K(E)c}\cap M$,
		\item $c\ind_E^{\mathrm{i},\qf}M$,
		\item for all $j$, we have $a_jc_j\equiv_{\L(E)}^{\qf}a_jc$.
	\end{itemize}
	Then, the type
	\[ \tp(c_1/Ea_1)\cup\tp(c_2/Ea_2)\cup\qftp(c/M) \]
	is consistent. 
\end{teo}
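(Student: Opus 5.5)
We follow the strategy of \cite[Theorem 2.54]{ppc}: first build a field amalgam, then upgrade to a full $\L$-type using \cref{qftpimpliesfulltype}. Write $F_0=\K(E)$, and let $C_j'$ (resp. $C'$) be the field generated by $A_jC_j$ (resp. $A_jC$ and $\K(M)C$). Using homogeneity, we will realize everything inside one big elementary extension $N\succeq M$ that contains $c$.

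The plan is to find fields $D_1,D_2$ and an $\LK$-structure on the compositum $\K(M)\,D_1D_2\,C$ with the following properties. There should be $\L(E)$-isomorphisms $\clalg{A_jC_j}\cap M\to D_j$ over $A_j$ sending $c_j$ to a common tuple $c^*$. Moreover, $\qftp(c^*/\K(M))$ should coincide with $\qftp(c/\K(M))$ in each $\LKi$.

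Concretely, we take $c^*$ to be the realization $c$ itself, and aim to embed $\K(\acl(Ea_jc_j))$ over $A_jC$ into $\clalg{\K(M)C}$. We need to check several disjointness statements over $F_0$.
\begin{itemize}[label=$\triangleright$]
\item By \cref{ppc2.47}, $C\ind^{a}_{F_0}\K(M)$.
\item By \cref{rmkaclclosedregular}, $F_0\subs A_j$, $F_0\subs C$ and $F_0\subs\K(M)$ are regular.
\item Combining these, $C$ and $\K(M)$ are linearly disjoint over $F_0$, so $\K(M)C$ is a regular extension of $\K(M)$ and of $C$.
\item From $A_1\cap A_2=F_0$ and algebraic closedness of $A_j$ in $M$, we get that $A_1C$ and $A_2C$ are linearly disjoint over $C$, and likewise after taking relative algebraic closures inside $\clalg{\K(M)C}$.
\end{itemize}
The qf-isomorphism $a_jc_j\equiv^{\qf}_{\L(E)}a_jc$ gives field isomorphisms $A_jC_j\cong A_jC$ respecting each valuation. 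Via $c_1\equiv_{\L(E)}c_2$, the two copies of $\K(\acl(Ec_j))$ can be identified over $C$.

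We then invoke the amalgamation results of \cite[Section 4]{ptc} (the independence theorem for pseudo $T$-closed fields with several valuations, together with \cite[Proposition 6.11]{ptc} for the compatible valuations). This yields a $\PACV_n$-type amalgam of the three $\LK$-structures
\[ \K(\acl(Ea_1c_1)),\quad \K(\acl(Ea_2c_2)),\quad \K(M)C, \]
glued over their pairwise intersections. The theorem's hypotheses also need the boundedness data, i.e. the constants $c_j$ and the Galois-theoretic condition in $T_{\mathrm{bd}}$. These hold because all fields contain $F_0\supseteq$ the constants, and the restriction maps on absolute Galois groups agree, since everything is a regular extension of $F_0$ and $\clalg{\K(M)}=\K(M)\clalg{F_0}$.

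Having the amalgam inside a model of $T$, we pass back to $N$ by the embedding result (\cref{existclosedL*} and \cite[Proposition 6.7]{ptc}). We obtain $c^*$ realizing $\qftp_{\LK}(c/\K(M))$. Since the geometric sorts of $\K(M)$-tuples are determined by field data, $c^*$ also realizes $\qftp(c/M)$. In addition, $\K(\acl(Ea_jc^*))$ is $\L(E)$-qf-isomorphic to $\K(\acl(Ea_jc_j))$ over $a_j$ via $c_j\mapsto c^*$. By \cref{qftpimpliesfulltype}, applied over $E$ (which contains $\K(\acl(E))$) to the tuple $a_jc^*$, this qf-isomorphism upgrades to $a_jc_j\equiv_{\L(E)}a_jc^*$, i.e. $c^*\models\tp(c_j/Ea_j)$.

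The main obstacle is the disjointness bookkeeping. We must show that $\K(\acl(Ea_1c)),\K(\acl(Ea_2c)),\K(M)C$ (suitably relatively closed) satisfy the linear-disjointness and regularity hypotheses required by \cite[Section 4]{ptc}, in particular that $\K(\acl(Ea_jc))\cap\K(M)C$ is exactly $A_jC$ up to algebraic closure. We also need the amalgam to land algebraically closed in $\K(\acl(Ea_jc^*))$ so that \cref{qftpimpliesfulltype} applies. I would attack this first using the invariance $c\ind^{\mathrm{i},\qf}_E M$ and \cref{ppc2.47} repeatedly, mirroring \cite[Lemma 2.52--2.53]{ppc}.
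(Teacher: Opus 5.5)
Your overall plan matches the paper's. You transport $\K(\acl(Ea_jc_j))$ to fields $D_j\subseteq\clalg{A_jC}$ over $A_jC$, put a valued-field structure on $\K(M)D_1D_2$ using \cite[Section 4]{ptc}, embed it in an elementary extension of $\K(M)$ via \cref{existclosedL*}, and conclude with \cref{qftpimpliesfulltype}. However, the step that carries all the weight is deferred as the ``main obstacle'', and the one nontrivial disjointness claim you do make is false. Your fourth bullet fails in general. The hypotheses control only the intersection $A_1\cap A_2=\K(E)$, not the algebraic independence of $A_1$ and $A_2$ over $\K(E)$. Since $C\ind^{a}_{\K(E)}\K(M)$, transcendence bases of $A_1$ and $A_2$ stay individually independent over $C$, so any algebraic dependence between them over $\K(E)$ persists over $C$. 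Hence $A_1C$ and $A_2C$, and a fortiori $D_1$ and $D_2$, need not be linearly disjoint over $C$. Moreover, an amalgam of $D_1$, $D_2$, $\K(M)C$ ``glued over their pairwise intersections'' does not reduce to pairwise conditions: a valuation on a compositum need not exist even when the prescribed restrictions to three subfields are pairwise compatible. What \cite[Lemma 4.8]{ptc} provides is an amalgam of two structures over a common one. So you must first form $\K(M)D_j$ and then amalgamate these over the valued field $\K(M)C$ itself, not over $\K(M)$ and $C$ separately. Otherwise $\qftp(c/M)$ is not preserved; this is precisely the error in \cite[Theorem 2.54]{ppc} discussed in the remark following the theorem.

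The missing idea is the linear disjointness of $\K(M)D_1$ and $\K(M)D_2$ over $\K(M)C$. The paper obtains it by working over $A=\clalg{A_1A_2}\cap M$ rather than over $C$:
\begin{enumerate}
\item Since $A_1A_2\subseteq\K(M)$ is regular over each $A_j$, \cite[Lemma 2.1]{wfree} gives $\clalg{A_1}\cap\clalg{A_2}=\clalg{\K(E)}$.
\item \cite[Lemma 2.5(2)]{wfree} then gives $\clalg{CA_1}\cap\clalg{CA_2}\,\clalg{A_1A_2}=\clalg{\K(E)}CA_1$. Hence $CA_1\subseteq D_2A$ (and symmetrically $CA_2\subseteq D_1A$) is regular, and $D_1A\ind^{\ell}_{AC}D_2A$.
\item Separately, $C\ind^{\ell}_{\K(E)}\K(M)$ and the regularity of $A\subseteq\K(M)$ give $AD_j\ind^{\ell}_{A}\K(M)$.
\item A tensor-product computation combines these into $\K(M)D_1\otimes_{\K(M)C}\K(M)D_2\simeq\K(M)D_1D_2$, which is exactly the input \cite[Lemma 4.8]{ptc} needs.
\end{enumerate}
No Galois-theoretic compatibility or PAC independence theorem is required. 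After \cref{existclosedL*}, boundedness enters only through $\clalg{D_j}=\clalg{\K(E)}D_j$. Together with the regularity of $\K(E)\subseteq L$, this gives $\clalg{D_j}\cap L=D_j$, i.e.\ $D_j$ is the field part of $\acl(A_jc)$ in the new model, so \cref{qftpimpliesfulltype} applies.
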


\begin{proof}	
	By \cref{rmkaclclosedregular},  the fields $A_1$ and $A_2$ are regular extensions of $\K(E)$. Since $A_1\subs \K(M)$ and $A_2\subs\K(M)$ are regular, we deduce that $A_1A_2$ is a regular extension of $A_1$ and of $A_2$. Then, by \cite[Lemma 2.1]{wfree}, we have that 
	\[\clalg{A_1}\cap\clalg{A_2}=\clalg{A_1\cap A_2}=\clalg{\K(E)}.\]  
	
	Since $c\ind^{\mathrm{i},\qf}_E M$, we deduce that $C\ind_{\K(E)}^{a}\K(M)$ by \cref{ppc2.47} . We will show that $c$ realizes the desired type. Note that it suffices to show that $c\models \tp(c_1/Ea_1)\cup\tp(c_2/Ea_2)$.
	
	By hypothesis, $C_j\equiv_{\L(A_j)}^{\qf}C$, so there is a $\L(A_j)$-isomorphism $\phi_j:A_jC_j\to A_jC$ sending $C_j$ to $C$. This can be extended to an $\Lring(A_j)$-isomorphism $\clalg{\phi}_j:\clalg{A_jC_j}\to \clalg{A_jC}$. We define:
	\[ B_j:=\clalg{A_jC_j}\cap M,\qquad A:=\clalg{A_1A_2}\cap M,\qquad D_j:=\clalg{\phi}_j(B_j). \]
	We endow $D_j$ with the $\LK$-structure making $\clalg{\phi}_j$ an $\LK$-isomorphism. This is possible since $\LK$ is a relational expansion of $\Lring$.
	
	By \cite[Lemma 2.5.(2)]{wfree} , we get:
	\begin{align*}
	\clalg{CA_1}\cap \clalg{CA_2}\,\clalg{A_1A_2} &=\clalg{\clalg{C}\clalg{A_1}} \cap \clalg{\clalg{C}\clalg{A_2}}\, \clalg{\clalg{A_1}\clalg{A_2}}\\
	&= \clalg{\clalg{C}(\clalg{A_1}\cap\clalg{A_2})}\clalg{A_1}\\
	&= \clalg{C}\clalg{A_1}\\
	&= \clalg{\K(E)}CA_1. 
	\end{align*}
	Note that $D_2\subs \clalg{CA_2}$ and $A\subs \clalg{A_1A_2}$. Then, $D_2A\subs \clalg{CA_2}\clalg{A_1A_2}$, and this implies $D_2A= D_2A\cap \clalg{CA_2}\clalg{A_1A_2} $. Therefore, 
	\[ \clalg{CA_1}\cap D_2A = \clalg{CA_1}\cap D_2A\cap \clalg{CA_2}\,\clalg{A_1A_2}= D_2A\cap \clalg{\K(E)}CA_1 .\]
	
	On the other hand, since $\K(\acl(E))=\K(E)$, the extension $\K(E)\subs \K(M)$ is regular by \cref{rmkaclclosedregular}, and so are $\K(E)\subs D_2A$ and $\K(E)\subs CA_1$. We have then $D_2A\ind_{CA_1}^{\ell} \clalg{\K(E)}CA_1$. Then, $\clalg{CA_1}\cap D_2A = CA_1$. This is, $CA_1\subs D_2A$ is regular. Similarly one shows $CA_2\subs D_1F$ is regular. Note that, since $D_j\subs \clalg{CA_j}$, then $\ D_kA\ind_{CA_j}^{\ell}{D_j}$ for $j\neq k$. This implies that $D_1A\ind_{AC}^{\ell} D_2A$
	
	Now, we have that $C\ind_{\K(E)}^{\ell}\K(M)$. Certainly, it was demonstrated at the beginning that  $C\ind_{\K(E)}^{a}\K(M)$, and the extension $\K(E)\subs \K(M)$ is regular. It follows that $CA\ind_{A}^{\ell} \K(M)$, so $\clalg{CA}\ind_A^{a}\K(M)$. Since $AD_j\subs \clalg{CA}$, we have that $AD_j\ind_A^{a}\K(M)$. Using the fact that $A\subs \K(M)$ is a regular extension, we can deduce that $AD_j$ is linearly disjoint from $\K(M)$ over $A$. 
	
	With all this, we consider the following tensor product:
	\[ (\K(M)\otimes_{A}AD_1)\otimes_{\K(M)\otimes_{^A}AC}(\K(M)\otimes_{A}AD_2). \]
	This is isomorphic to the following tensor product:
	\[ (\K(M)\otimes_{A}AD_1)\otimes_{\K(M)\otimes_A AC}\left[ \K(M)\otimes_A (AC\otimes_{AC} AD_2) \right]. \]
	By  \cite[Exercise 2.15]{atiyah}, we have that the above tensor product is isomorphic to
	\begin{align*}
		\left[ (\K(M)\otimes_{A}AD_1)\otimes_{\K(M)\otimes_A AC} (\K(M)\otimes_A AC)\right]\otimes_{AC} AD_2
		&\simeq (\K(M)\otimes_A AD_1)\otimes_{AC} AD_2\\
		&\simeq \K(M)\otimes_A(AD_1\otimes_{AC}AD_2).
	\end{align*}
	Using the linear independences that we proved, we deduce that
	\[ \K(M)\otimes_A(AD_1\otimes_{AC}AD_2) \simeq \K(M)\otimes_A AD_1D_2 \simeq \K(M)D_1D_2 .\]
	Also, we get the following isomorphism:
	\[ (\K(M)\otimes_{A}AD_1)\otimes_{\K(M)\otimes_{A}AC}(\K(M)\otimes_{A}AD_2) \simeq \K(M)D_1\otimes_{\K(M)C}\K(M)D_2. \]
	Putting everything together, we have that
	\[ \K(M)D_1\otimes_{\K(M)C}\K(M)D_2 \simeq \K(M)D_1D_2. \]
	This is, $\K(M)D_1$ is linearly disjoint from $\K(M)D_2$ over $\K(M)C$. By \cite[Lemma 4.8]{ptc}, the field $\K(M)D_1D_2$ can be endowed with an $\LK$-structure extending that of $\K(M)D_1, \K(M)D_2$ and $\K(M)C$. 
	
	By \cref{existclosedL*}, $\K(M)$ is $\LK$-existentially closed in $\K(M)D_1D_2$. Then, we can find $L$ an $\LK$-elementary extension of $\K(M)$ containing $\K(M)D_1D_2$ as a substructure. Let $M^*$ be the $\L$-structure whose underlying field is $L$. Since the $\LK$-structure on $\K(M)D_1D_2$ extends that of $\K(M)C$, we have that  $\qftp_{\L}^{M^*}(c/M)=\qftp_{\L}^{M}(c/M)$. In particular, we still have $c\equiv_{\L(A_j)}^{\qf}c_j$.
	
	Note that $\clalg{D_j}=\clalg{\K(E)}D_j$ since $\clalg{B_j}=\clalg{\K(E)}B_j$. Now, $\K(E)\subs L$ is still regular, so $\clalg{D_j}\cap L= D_j$ by \cite[Fact 2.45.(i)]{ppc}). This is, $D_j=\K(\acl(D_j))=\K(\acl(CA_j))$. Also, $D_j\equiv_{\LK(A_j)}^{\qf}B_j$ by construction. Then, by \cref{qftpimpliesfulltype}, $c\equiv_{\L(A_j)}c_j$.
\end{proof}

\begin{remark}
	Montenegro and Rideau-Kikuchi proved in \cite[Theorem 2.54]{ppc} the analogous version of the above theorem for the case of pseudo $p$-adically closed fields. However, as it was pointed out by them, there is a mistake in the argument, since the structure constructed on $\K(M)D_1D_2$, in their proof, extends the structure on $\K(M)$ and on $C$ separately. This does not guarantee that $\qftp(c/Ea_1a_2)$ is preserved once we take the elementary extension $M^*$. However, their result is still valid: it suffices to use the tensor product argument given in the proof above.
\end{remark}

\section{Bringing everything together}\label{seccfianl}

\begin{teo}\label{teofinal}
	The theory $T$ weakly eliminates imaginaries.
\end{teo}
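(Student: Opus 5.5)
The plan is to apply \cref{criterioeliminacion} with $\L_0=\LK$, $\L_1=\L$ and $R$ the sorts of $\L$; the tuples in both conditions then live in the field sort. The two conditions are derived from \cref{teodensidadtipos} and \cref{3amalgamacion} respectively.

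\textbf{Condition (1).} Let $E=\acl^{\eq}(E)$, let $a\in\K(M)$ be a tuple and $C\subs\K(M)$. By \cref{teodensidadtipos} there are $p_i\in\I(\clalg{M}/\mc{G}_i(E))$ such that $\tp(a/E)\cup\bigcup_i p_i$ is consistent. By saturation we realize it by some $a^*\in M$ over $C$, i.e.\ $a^*\models\tp(a/E)\cup\bigcup_i p_i\rest{C\mc{G}(E)}$. This gives $a^*\equiv_{\L(E)}a$.

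It remains to produce a global quantifier free $\Aut(M/R(E))$-invariant type extending $\qftp(a^*/C R(E))$.

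\begin{itemize}
\item Each $p_i$ restricted to $M$ yields a quantifier free $\L_i$-type over $M$, invariant under $\Aut_{\L_i}(\clalg{M}/\mc{G}_i(E))$, hence under $\Aut(M/R(E))$.
\item The quantifier free $\L$-type is the union of the quantifier free $\L_i$-types. Atomic $\L$-formulas in field variables are $\L_i$-atomic for a single $i$, with parameters in $\mc{G}_i$.
\item So $\bigcup_i \qf(p_i\rest M)$ is a global quantifier free type. It is consistent, being realized in an elementary extension by a realization of $\tp(a/E)\cup\bigcup_ip_i$, and it is invariant.
\end{itemize}

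The one check needed is that this union is complete as a quantifier free $\L$-type. This holds because every $\L$-atomic formula in field variables with parameters in $M$ is an $\L_i$-formula for some $i$.

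\textbf{Condition (2).} Let $E=\acl(E)$ and let $a_1,a_2,c_1,c_2,c$ be as in the criterion. We want to reduce to the hypotheses of \cref{3amalgamacion}. First replace each $a_j$ by an enumeration of $A_j=\clalg{\K(E)a_j}\cap M$. By \cref{partecorpiqueacl} this is $\K(\acl(Ea_j))$. We do the same for $c_j$ and $c$, extending the quantifier free equivalences $a_jc_j\equiv^{\qf}a_jc$ to these closures. Next we need $A_1\cap A_2=\K(E)$. This follows from $a_1\ind^{\mathrm{i},\qf}_E a_2$ via \cref{ppc2.47}: algebraic disjointness over $\K(E)$, together with regularity from \cref{rmkaclclosedregular}, gives $A_1\cap A_2\subs\clalg{\K(E)}\cap M=\K(E)$. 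The hypothesis $c\ind^{\mathrm{i},\qf}_E M$ may be arranged by moving $c$ to realize the invariant extension over $M$.

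\cref{3amalgamacion} then yields a realization $c^*$ of $\tp(c_1/Ea_1)\cup\tp(c_2/Ea_2)$. This is exactly condition (2), so \cref{criterioeliminacion} gives weak elimination of imaginaries.

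\textbf{Main obstacle.} The hardest part is the bookkeeping in condition (2): the quantifier free equivalence $a_jc_j\equiv^{\qf}a_jc$ must extend to the relative algebraic closures $C_j$ and $C$, so that the closures correspond. I would first try to use \cref{qftpimpliesfulltype} and the field-theoretic description of $\acl$. A field isomorphism of the closures that is $\LK$-compatible is induced, since $\clalg{\K(E)c}\cap M$ is determined up to this data by the quantifier free type together with $\L_i$-henselianity. In the worst case, the criterion's condition (2) may already be stated for closed tuples, following \cite[Lemma 1.19]{ppc}. Finally, for a single valuation, weak elimination together with \cref{codingfinitesets} gives full elimination of imaginaries.
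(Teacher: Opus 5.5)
Your overall route is the paper's. Condition (1) of \cref{criterioeliminacion} comes from \cref{teodensidadtipos}: every atomic $\L$-formula in field variables is an $\L_i$-formula for a single $i$, so the $p_i$ jointly give a global quantifier-free invariant type. Condition (2) comes from \cref{3amalgamacion}. Your argument for (1) is the paper's, and slightly more careful, since you realize $a^*$ inside $M$. Your derivation of $A_1\cap A_2=\K(E)$ from $a_1\ind^{\mathrm{i},\qf}_E a_2$ via \cref{ppc2.47} also works, provided you first realize the global invariant extension over $M$ and then compare $\trdeg(a_1/\K(E)(a_2))$ with $\trdeg(a_1/\K(E))$. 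The paper leaves this step implicit.

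What does not work is the bookkeeping you flag as the main obstacle: transferring $a_jc_j\equiv^{\qf}_{\L(E)}a_jc$ from the given tuples to enumerations of their relative algebraic closures.
\begin{itemize}
\item The quantifier-free type of $c$ over $Ea_j$ only describes the multi-valued field $\K(E)(a_j,c)$ together with the geometric data. It does not say which algebraic extensions of that field are realized in the ambient field. This is exactly why \cref{qftpimpliesfulltype} must take $\K(\acl(Ea))$ as part of its input.
\item There is no henselianity to appeal to. $M$ is PAC, hence $\v_i$-dense in $\clalg{M}$, hence by Krasner's lemma not $\v_i$-henselian unless algebraically closed.
\item Moving $c$ to a realization of $p\rest{M}$ preserves only $\qftp(c/M)$, so the new tuple need not enumerate a relatively algebraically closed field.
\end{itemize}
The paper performs neither reduction. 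It identifies condition (2) directly with \cref{3amalgamacion}, that is, it only uses the criterion for tuples that already enumerate $\K(\acl(\cdot))$-closed fields, which is your fallback. So drop the henselianity argument and commit to that version. The loss of closedness when you move $c$ is then harmless, because the proof of \cref{3amalgamacion} uses nothing about $c$ beyond $\qftp(c/M)$.
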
	

\begin{proof}
	We apply \cref{criterioeliminacion}. Condition (2) is \cref{3amalgamacion}. To verify condition (1), we take $E=\acl^{\eq}(E)\subs M^{\eq}$, a tuple $a\in \K(M)$, and $C\subs M$. By \cref{teodensidadtipos}, we can find $\L_i$-types $p_i\in\I(\clalg{M}/\mc{G}_i(E))$ such that $\tp(a/E)\cup_i p_i$ is a consistent partial type. Let $a^*$ realize this type. Clearly, $a\equiv_{\L(E)} a^*$. Now, we also have that $\qftp_{\L}(a^*/\clalg{M})$ is $\Aut(\clalg{M}/E)$-invariant. Indeed, any atomic formula belonging to this type belongs to some $p_i$. This implies $a^*\ind_{\mc{G}(E)}^{\mathrm{i},\qf}C$. 
\end{proof}

\begin{coro}
	If the language contains only one valuation, then $T$ eliminates imaginaries. 
\end{coro}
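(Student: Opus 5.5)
The standard route is to go from weak elimination of imaginaries (\cref{teofinal}) to full elimination by coding finite sets, which for one valuation is \cref{codingfinitesets}. Recall the general principle: weak elimination of imaginaries together with coding of finite sets of real tuples gives full elimination of imaginaries.

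Given an imaginary $e\in M\eq$, weak elimination provides a real tuple $a$ (in the sorts $\mathcal{G}$) with $e\in\dcl\eq(a)$ and $a\in\acl\eq(e)$. The finite set $\set{a_1,\dots,a_k}$ of $\L(e)$-conjugates of $a$ is $e$-definable, and every $\sigma\in\Aut(M/e)$ permutes it. So I will fix an $\L(e)$-definable finite set $Y\ni a$, chosen as the orbit of $a$ under $\Aut(M/e)$. Let $d$ be a code for $Y$ in the real sorts. With one valuation, such a code exists by \cref{codingfinitesets}, since $Y$ is a finite set of tuples in $\mathcal{G}(M)$. I then check interdefinability.
\begin{itemize}
\item $d\in\dcl\eq(e)$, because $Y$ is $e$-definable.
\item $e\in\dcl\eq(d)$. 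Any $\tau\in\Aut(M/d)$ fixes $Y$ setwise, so $\tau(a)=a_j$ for some $j$. Since $e=f(a)$ for an $\emptyset$-definable function $f$, and all $a_j$ are $e$-conjugates of $a$, we get $f(a_j)=e$. Hence $\tau(e)=f(a_j)=e$, and by saturation and homogeneity $e\in\dcl\eq(d)$.
\end{itemize}
Thus $e$ is interdefinable with the real tuple $d$, which is full elimination of imaginaries.

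The main point to verify is that \cref{codingfinitesets} applies to finite sets of tuples, not only of single elements. It does: \cref{lemaintedef} handles arbitrary elements of the sorts of $\clalg{M}$, and a finite set of tuples is coded in $\ACVF^{\mathcal{G}}$ by a tuple in $\dcl^{\clalg{M}}_{\L}(M)$. A second point is that the $e$-definability of the orbit uses only the finiteness $a\in\acl\eq(e)$, which weak elimination supplies. Everything else follows by the formal argument above.
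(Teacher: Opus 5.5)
Your proposal is correct and follows the paper's route: the paper deduces full elimination of imaginaries directly from weak elimination (\cref{teofinal}) together with the coding of finite sets for one valuation (\cref{codingfinitesets}), and your argument spells out the standard implication behind that one-line proof. You also check that the coding covers finite sets of tuples, by applying \cref{lemaintedef} coordinatewise to an $\ACVF^{\mathcal{G}}$-code in $\dcl^{\clalg{M}}_{\L}(M)$; this is the right point to verify and matches how the paper proves \cref{codingfinitesets}.
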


\begin{proof}
	This is a direct consequence of \cref{teofinal} and \cref{codingfinitesets}.
\end{proof}


\nocite{*}
\bibliographystyle{amsalpha}
\bibliography{referencias.bib}

\end{document}